\numberwithin{equation}{section}
\def\@fnsymbol#1{\ensuremath{\ifcase#1\or \dagger\or \ddagger\or
   \mathsection\or \mathparagraph\or \|\or \dagger\dagger\or 
   \ddagger\ddagger \else\@ctrerr\fi}}
\newtheorem{thm}{Theorem}[section]
\newtheorem{lemma}[thm]{Lemma}
\newtheorem{cor}[thm]{Corollary}
\theoremstyle{definition}
\newtheorem{rem}[thm]{Remark}
\newtheoremstyle{RHP}  		
  {3pt}					
  {3pt}					
  {\itshape}				
  {}						
  {\bfseries}				
  {}						
  {.5em}					
  {Riemann-Hilbert Problem #2 #3} 
\theoremstyle{RHP}
\let\save@mathaccent\mathaccent
\newcommand*\if@single[3]{%
  \setbox0\hbox{${\mathaccent"0362{#1}}^H$}%
  \setbox2\hbox{${\mathaccent"0362{\kern0pt#1}}^H$}%
  \ifdim\ht0=\ht2 #3\else #2\fi
  }
\newcommand*\rel@kern[1]{\kern#1\dimexpr\macc@kerna}
\newcommand*\widebar[1]{\@ifnextchar^{{\wide@bar{#1}{0}}}{\wide@bar{#1}{1}}}
\newcommand*\wide@bar[2]{\if@single{#1}{\wide@bar@{#1}{#2}{1}}{\wide@bar@{#1}{#2}{2}}}
\newcommand*\wide@bar@[3]{%
  \begingroup
  \def\mathaccent##1##2{%
    \let\mathaccent\save@mathaccent
    \if#32 \let\macc@nucleus\first@char \fi
    \setbox\z@\hbox{$\macc@style{\macc@nucleus}_{}$}%
    \setbox\tw@\hbox{$\macc@style{\macc@nucleus}{}_{}$}%
    \dimen@\wd\tw@
    \advance\dimen@-\wd\z@
    \divide\dimen@ 3
    \@tempdima\wd\tw@
    \advance\@tempdima-\scriptspace
    \divide\@tempdima 10
    \advance\dimen@-\@tempdima
    \ifdim\dimen@>\z@ \dimen@0pt\fi
    \rel@kern{0.6}\kern-\dimen@
    \if#31
      \overline{\rel@kern{-0.6}\kern\dimen@\macc@nucleus\rel@kern{0.4}\kern\dimen@}%
      \advance\dimen@0.4\dimexpr\macc@kerna
      \let\final@kern#2%
      \ifdim\dimen@<\z@ \let\final@kern1\fi
      \if\final@kern1 \kern-\dimen@\fi
    \else
      \overline{\rel@kern{-0.6}\kern\dimen@#1}%
    \fi
  }%
  \macc@depth\@ne
  \let\math@bgroup\@empty \let\math@egroup\macc@set@skewchar
  \mathsurround\z@ \frozen@everymath{\mathgroup\macc@group\relax}%
  \macc@set@skewchar\relax
  \let\mathaccentV\macc@nested@a
  \if#31
    \macc@nested@a\relax111{#1}%
  \else
    \def\gobble@till@marker##1\endmarker{}%
    \futurelet\first@char\gobble@till@marker#1\endmarker
    \ifcat\noexpand\first@char A\else
      \def\first@char{}%
    \fi
    \macc@nested@a\relax111{\first@char}%
  \fi
  \endgroup
}
\newcommand{\R}{\mathbb{R}}
\newcommand{\C}{\mathbb{C}}
\newcommand{\N}{\mathbb{N}}
\newcommand{\ie}{{i.e.}}
\newcommand{\cf}{cf. }
\newcommand{\U}{\mathcal{U}}
\newcommand{\nin}{\not\in}
\newcommand{\lp}{\left(}
\newcommand{\rp}{\right)}
\newcommand{\lb}{\left[}
\newcommand{\rb}{\right]}
\DeclarePairedDelimiter\ceil{\lceil}{\rceil}
\DeclarePairedDelimiter\floor{\lfloor}{\rfloor}
\renewcommand{\Re}{\operatorname{Re}}
\renewcommand{\Im}{\operatorname{Im}}
\newcommand{\bigo}[2][]{\mathcal{O}_{#1} \left( #2 \right)}
\newcommand{\dd}{\mathrm{d}}
\newcommand{\eps}{\epsilon}
\newcommand{\g}{\gamma}
\newcommand{\zerofree}{\mathcal{F}_\lambda}
\newcommand{\zerofreeone}{\mathcal{F}_{1,\lambda} }
\newcommand{\K}{\mathrm{K}}
\newcommand{\y}{\mathcal{Y}}
\newcommand{\Acal}{\mathcal{A}}
\newcommand{\Ecal}{\mathcal{E}}
\newcommand{\ball}{B}
\newcommand{\contour}{\gamma}
\newcommand{\scal}{ \mathcal{S} }
\newcommand{\dscal}{ \partial \mathcal{S} }
\newcommand{\phase}{ \phi_{\lambda} }
\newcommand{\Sz}{\Omega}
\newcommand{\nSz}{\mho}
\newcommand{\up}{\upsilon}
\DeclareMathOperator{\erfc}{erfc}
\DeclareMathOperator{\W}{W}
\DeclareMathOperator{\dist}{dist}
\title[Zeros of $\zeta$ polynomials]{Dynamic behavior of the roots of the Taylor polynomials of the Riemann xi function with growing degree}
\author{Robert Jenkins}
\address{Department of Mathematics  University of Arizona, Tucson, Arizona 85721}
\email{\texttt{rjenkins@math.arizona.edu}}
\author{Ken D. T.-R. McLaughlin}
\address{Department of Mathematics  Colorado State University, Fort Collins, CO 80526}
\email{\texttt{kenmcl@rams.colostate.edu}}
\date{\today}
\begin{document}

\begin{abstract}
We establish a uniform approximation result for the Taylor polynomials of the xi function of Riemann which is valid in the entire complex plane as the degree grows. In particular, we identify a domain growing with the degree of the polynomials on which they converge to Riemann's xi function. 
Using this approximation we obtain an estimate of the number of ``spurious zeros" of the Taylor polynomial which are outside of the critical strip, which leads to a Riemann - von Mangoldt type of formula for the number of zeros of the Taylor polynomials within the critical strip.  Super-exponential convergence of Hurwitz zeros of the Taylor polynomials to bounded zeros of the xi function are established along the way, and finally we explain how our approximation techniques can be extended to a collection of analytic L-functions.
\end{abstract}

\maketitle

\section{Introduction}
Consider Riemann's $\xi$-function defined by
\begin{equation}\label{xi}
	\xi(z) = \frac{1}{2} \pi^{-z/2}\, \Gamma \lp \frac{z}{2} \rp  z (z-1)  \zeta(z)
\end{equation}
where $\zeta(z)$ is the Riemann $\zeta$-function. 
The pre-factors of the $\zeta$-function in the above definition absorb the poles and trivial zeros of the $\zeta$-function so that $\xi$ is an entire function whose only zeros are the nontrivial zeros of $\zeta(z)$, \ie\ those lying in the critical strip $0 < \Re\, z < 1$. As a consequence, the functional equation for the $\xi$-function is much simplified
\begin{equation}\label{relation}
	\xi(z) = \xi(1-z).
\end{equation}
The infamous Riemann Hypothesis is equivalent to the statement that the only zeros of $\xi(z)$ lie on the critical line $\Re z = 1/2$. There is a vast body of literature concerning the properties of the $\zeta$-function and the Riemann Hypothesis, and we cannot do any justice to summarizing those works here. We refer the reader to the classical works \cite{Edwards,Titchmarsh} at the tip of that iceberg.

In Riemann's 1859 paper \cite{Riemann} he considered the quantity
\[
	N(T) = 
	\left\{ 
	z \in \C \mid \zeta(z) = 0, \quad 
	\Re(z) \in (0,1), \ \Im(z) \in (0,T] 
	\right\},
\]
and proposed that $N(T) \approx T/(2\pi) \log(T/(2\pi))-T/(2\pi)$. This was subsequently proved by von Mangoldt with an explicit bound on the remainder:
\begin{equation}\label{Riemann-Mangoldt}
	N(T) = \frac{T}{2\pi} \log \frac{T}{2\pi}  - \frac{T}{2\pi} + \bigo{ \log T }.
\end{equation}

In this paper we study the zeros of the Taylor polynomial approximations of Riemann's $\xi$-function. 
We establish a version of the Riemann-von Mangoldt formula for these zeros, by using a new uniform asymptotic description of the Taylor polynomials when the degree is large. 
The techniques used here are general and can be applied to a broad class of functions. 
In the last section of this paper we extend the analysis of Taylor polynomials to a larger collection of analytic L-functions. 

Studying the zeros of Taylor approximates to given functions goes back at least to the 1920s, and probably earlier. 
In \cite{Szego}, Szeg\H{o} considered the distribution of zeros of 
$p_n(z) = \sum_{k=0}^n z^k/k!$, the partial sums of the exponential series. 
He showed that the zeros of the rescaled function $p_n(nz)$ converge as $n \to \infty$ to a curve $D_\infty$, now called the Szeg\H{o} curve, which is a branch of the level curve $\{ z \,:\, \left| z e^{1-z} \right| = 1 \}$ and computed the asymptotic distribution of zeros along the Szeg\H{o} curve. 
Subsequent work in this direction \cite{ACV,Buckholtz,NR} has provided detailed results bounding the distance of the zeros of $p_n(nz)$ from $D_\infty$.
Similar results on the zeros of the partial sums of $\cos,\ \sin$, and other exponential functions have been derived in \cite{Bleher,CVW}.  An extension to partial sums of analytic functions defined by exponential integrals appears in \cite{Vargas}, which also contains some further historical discussion and references.

The starting point for our analysis of the Taylor polynomials of the $\xi$-function is the recent work of \cite{KKM} in which the authors utilize basic facts of complex analysis to represent the partial sums, $p_n$, of the exponential series as Cauchy integrals over certain contours in the complex plane. 
Steepest descent analysis of the (rescaled) Taylor polynomials and properties of Cauchy integrals lead to a uniform asymptotic description of the polynomials as $n \to \infty$ in the entire complex plane. 
In doing so, \cite{KKM} re-derives many of Szeg\H{o}'s classic results on the zeros of the partial sums. Additionally, the method naturally accommodates the presence of critical points in the asymptotic analysis which complicate the approximation theory in the more classical works mentioned previously.  

Recall the Cauchy integral representation of the $n^{th}$ Taylor polynomial approximating a given function $F(z)$  (which we assume to be entire to avoid fretting about domain issues):
\begin{align}
\label{ChangeThis}
T_{n-1}(F;z) = F(z) \left[ \chi_{\mathcal{S}}(z) - \frac{z^{n}}{F(z)} \oint_{\partial \mathcal{S}} \frac{F(s)}{s^{n}} \frac{ds}{s-z}  \right]\ ,
\end{align}
where $\mathcal{S}$ is taken to be a simply connected open set whose boundary $\partial \mathcal{S}$ is either a finite union of smooth arcs forming a simple closed (obviously rectifiable) curve, or a reasonable extension (which will be described as needed below), and $\chi_{\mathcal{S}}(z)$ is the characteristic function of $\mathcal{S}$.  Basic results concerning Taylor approximation are obtained from this representation by taking $\mathcal{S}$ to be a disc of fixed size and then estimating the $n$-dependence of the last term on the right hand side of (\ref{ChangeThis}).  More interestingly, the integral's dependence on $n$ can be estimated precisely, using the steepest descent method for integrals, provided the function $F$ is so nice as to permit the application of the steepest descent method.  

A portion of this paper is dedicated to showing very explicitly that this is so for the function $\xi$ defined above.  However, it is useful to describe the general conditions, as cryptic as they might appear to be:  one requires that for $n$ sufficiently large there should be a number of ``stationary phase points'', and that the original contour of integration can be deformed to a contour of controllable arc length which passes through one or more of these stationary phase points while otherwise remaining in regions where the integrand is exponentially smaller than its behavior near one (or more) of these critical points.  
\begin{figure}[htb]
\centering
	\includegraphics[width=.6\textwidth]{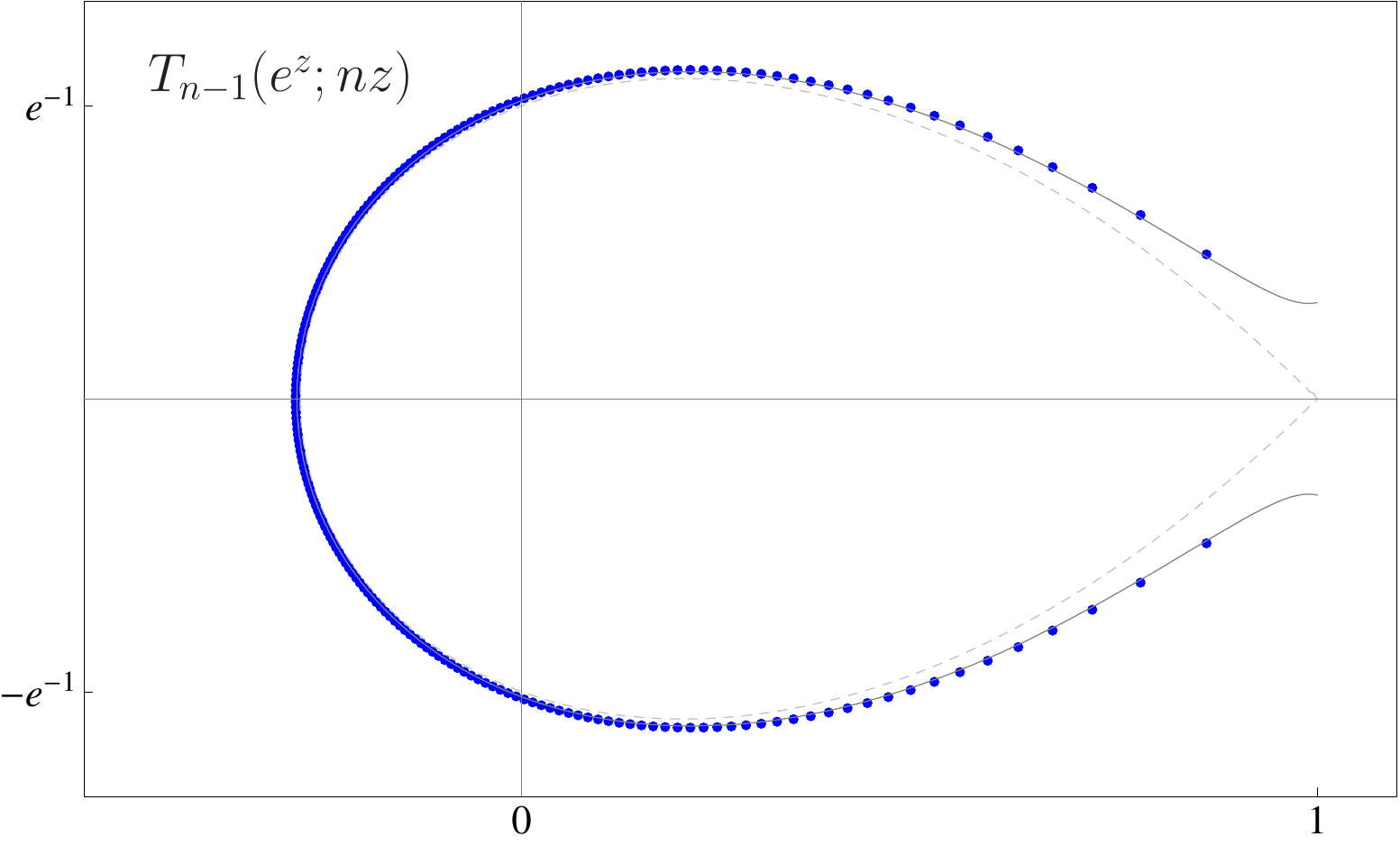}
\caption{
Each dot represents a zero of the (rescaled) Taylor polynomial $T_{n-1}(e^z; nz)$ of degree $n-1=200$. As $n\to \infty$, these zeros accumulate along the Szeg\H{o} curve $D_\infty$ (dashed line); for finite $n$, an improved Szeg\H{o} curve $D_n$ (solid) line better approximates the zeros. 
\label{fig:exp}
}
\end{figure}
The simple case of $F(z) = e^{z}$ is useful to clarify the above discussion (see \cite{KKM} for more information, including a brief discussion of the various contributions to this example).  
Evaluating \eqref{ChangeThis} by steepest descent methods, it is convenient to introduce a rescaling map $z \mapsto \lambda(n) z$ which renormalizes the stationary phase points, which typically grow with $n$, to remain $\bigo{1}$ as $n \to \infty$. In the case $F(z) = e^{z}$, there is a single stationary phase point $z_0 =n := \lambda(n)$ and \eqref{ChangeThis} becomes\footnote{The behavior for $z$ near $1$ is more delicate, since the steepest descent method must be modified to accommodate a pole impinging upon a stationary phase point} for any $\delta>0$,  
\begin{align}\label{TaylorApproxExp}
T_{n-1}(e^{z}; n z) = 
	e^{n z}  \left[ \chi_{\mathcal{S}}(n z) - 
	 \frac{ \left( z e^{1-z} \right)^{n}}{ \sqrt{2 \pi } \sqrt{ n } }\frac{1}{1-z} \left( 1 + \bigo{\frac{1}{n} } \right) \right], \qquad |z-1| >\delta.
\end{align}
Formula \eqref{TaylorApproxExp} demonstrates that the Taylor polynomials approximate $e^z$ on sets that grow with $n$.  We can characterize the largest such set, $\Omega(e^z)$, as the closure of the connected component of $|z e^{1-z}|< 1$ containing $z=0$:
\begin{align}
\Omega\left( e^{z} \right) = 
\left\{ z \,: \,  |z e^{1-z}| < 1 \text{ and } |\Re z| < 1
\right\} \ .
\end{align}
The boundary $D_\infty = \partial \Omega( e^z )$ is the Szeg\H{o} curve mentioned previously. 
For $z$ away from the Szeg\H{o} curve, the asymptotic formula \eqref{TaylorApproxExp} clearly cannot vanish.
Szeg\H{o} showed that: 
1) every accumulation point of the zeros $\{z_{k,n} \}_{k=1}^n$ of $T_{n-1}(e^z; nz)$ must lie on $D_\infty$; 
2) Every point on $D_\infty$ is an accumulation point of $\{z_{k,n} \}_{k=1}^n$. 
It was later shown, \cite{Buckholtz}, that $\dist \lp z_{k,n};\ D_\infty \rp = \bigo{\frac{\log n}{n} }$ for each zero $z_{k,n}$ of $T_{n-1}(e^z; n z)$ which is uniformly bounded away from the stationary point at $z=1$ (for $z_{k,n}$ near $1$ the rate of convergence to $D_\infty$ slows to $\bigo{n^{-1/2}}$). 
It's also possible to improve on the Szeg\H{o} curve; one can consider the curve
\begin{equation}\label{exp Szego1}
	D_n^{(1)} = \left\{ z \,: \,  
		\frac{|z e^{1-z}|^n}{\sqrt{2\pi n}|1-z|} = 1 \text{ and } |\Re z| < 1
	\right\};
\end{equation}
it was shown in \cite{CVW} that for any $\delta>0$, $\dist(z_{k,n}, D_n^{(1)}) = \bigo{n^{-2}}$ for each $z_{k,n}$ such that $|z_{k,n}-1|> \delta$. The curve $D_n^{(1)}$ is only the first in a countable family of improved Szeg\H{o} curves $D_n^{(j)}$; the further improved Szeg\H{o} curves result from keeping $j$ terms from the complete asymptotic series which in \eqref{TaylorApproxExp} is represented simply by $\lp 1+\bigo{n^{-1}}\rp$.
In Figure~\ref{fig:exp} we plot the Szeg\H{o} curve and its (first) improvement for $e^z$ along with the roots of $T_{n-1}(e^z; nz)$ for $n=201$.  The plot was produced using the software package Mathematica \cite{Mathemagica}. 
\begin{figure}[htb]
\centering
	\includegraphics[width=.7\textwidth]{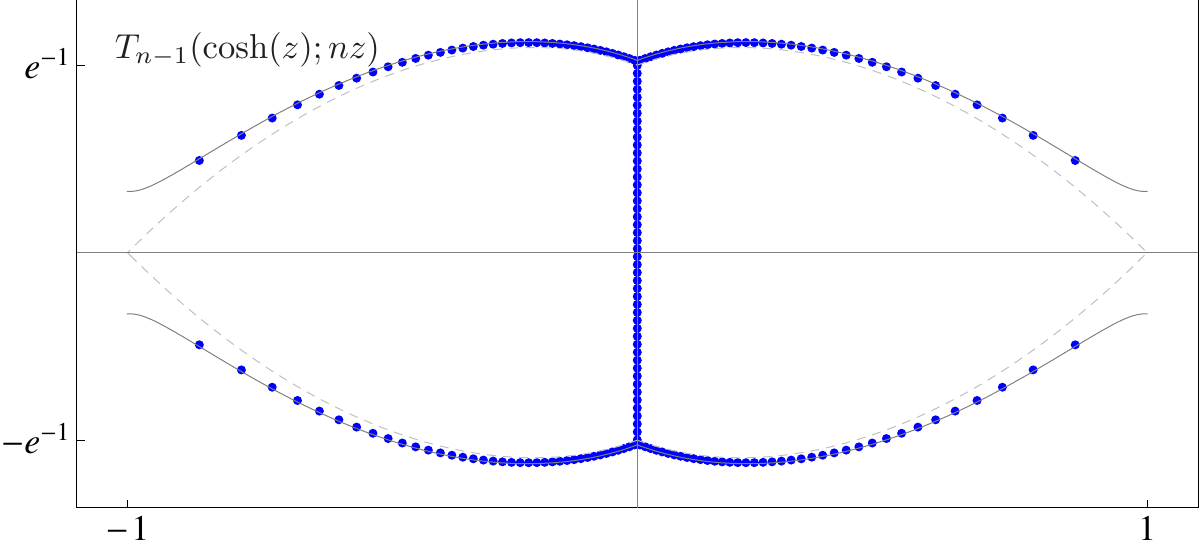}
\caption{
Each dot represents a zero of the (rescaled) Taylor polynomial $T_{n}(\cosh(z); (n+1)z)$ of degree $n=200$. The Szeg\H{o} curve (dashed line); and an improved Szeg\H{o} curve (solid line) are also given. Here, the zeros in the imaginary interval $[-ie^{-1}, ie^{-1}]$ are the Hurwitz zeros of $T_n(\cosh(z);z)$.
\label{fig:cosh}
}
\end{figure}

The situation for functions which have zeros is somewhat modified. 
Suppose that $s$ is a root of order $k$ of a function $f$ analytic at $s$. 
Then given any sufficiently small neighborhood $\mathcal{N}$ of the root $s$, the Taylor polynomials $T_n(f;z)$ converge (uniformly) to $f$ in $\mathcal{N}$ and so by Hurwitz's theorem (\cf \cite{Conway}) $T_{n}(f;z)$ will have exactly $k$ zeros in $n$ for all $n \geq n_0(\mathcal{N})$. 
This imposes a natural dichotomy on the zeros of the Taylor polynomials: those which converge to the zeros of $f$ we label, `Hurwitz zeros'; those which do not converge to zeros of $f$ we label `spurious zeros', and these accumulate on the analogue of the Szeg\H{o} curve for the function $f$. To illustrate this dichotomy see Figures~\ref{fig:cosh} and \ref{fig:zeta} where the zeros of rescaled Taylor polynomials of $\cosh(z)$ and $\xi(z+1/2)$ are given together with their Szeg\H{o} curves. 

In both Figure~\ref{fig:cosh} and Figure~\ref{fig:zeta} the zeros of the functions $\cosh((n+1)z)$ and $\xi(\lambda(n) z+1/2)$ do not appear, because they agree with the computed zeros of the Taylor polynomials to well beyond the plotting resolution.
In Table~\ref{table:cosh_zeros} the 24 roots of $T_{n}(\cosh(z); (n+1) z)$ with $n=200$ which lie on the imaginary axis in Figure~\ref{fig:cosh} are compared to the first 24 zeros of $\cosh((n+1)z)$. 
The convergence rate is striking.  These numerical calculations required very high precision calculations using \cite{Mathemagica}.
In Section~\ref{sec:true zeros} below, we will show that the rate at which any fixed Hurwitz zeros converges to a fixed root of the function $\xi$ is super-exponential. We believe that this is true for a large class of entire functions $f$, of which, as Table~\ref{table:cosh_zeros} suggests, $\cosh$ is certainly a member. 
\begin{table}
\renewcommand{\arraystretch}{1.1}
\centering
\begin{tabular}[t]{ @{}l@{\qquad}l@{\hskip 0.1\linewidth}l@{\qquad}l@{\hskip 0.1\linewidth}l@{\qquad}l@{}}
\toprule
$k$ & $|\tfrac{(2k-1)\pi}{2*201} - z_{k,n}|$ &
$k$ & $|\tfrac{(2k-1)\pi}{2*201} - z_{k,n}|$ &
$k$ & $|\tfrac{(2k-1)\pi}{2*201} - z_{k,n}|$ \\ 
\midrule
1 & $6.4203*10^{-343}$ &  9 & $2.2431*10^{-94}$ & 17 & $3.3118*10^{-36}$ \\
2 & $1.5341*10^{-246}$ & 10 & $1.2781*10^{-84}$ & 18 & $4.7719*10^{-31}$ \\
3 & $9.9742*10^{-202}$ & 11 & $7.6667*10^{-76}$ & 19 & $3.5500*10^{-26}$ \\
4 & $3.2819*10^{-172}$ & 12 & $7.2966*10^{-68}$ & 20 & $1.4618*10^{-21}$ \\
5 & $3.6516*10^{-150}$ & 13 & $1.4982*10^{-60}$ & 21 & $3.5351*10^{-17}$ \\
6 & $1.4648*10^{-132}$ & 14 & $8.4059*10^{-54}$ & 22 & $5.2813*10^{-13}$ \\
7 & $6.6037*10^{-118}$ & 15 & $1.5514*10^{-47}$ & 23 & $5.0926*10^{-9}$  \\
8 & $2.3563*10^{-105}$ & 16 & $1.0925*10^{-41}$ & 24 & $3.2346*10^{-5}$  \\	
\bottomrule  
\end{tabular}
\caption{Differences between the 24 numerical calculated Hurwitz zeros $z_{k,n}$ of $T_{200}(\cosh(z); 201 z)$ on the critical line depicted in Figure~\ref{fig:cosh}, and the first 24 zeros of $\cosh( 201 z)$. Numerical calculations were done with 400 digits of working precision \cite{Mathemagica}.  
\label{table:cosh_zeros}
} 
\end{table}

\subsection{Taylor polynomials of \texorpdfstring{$\xi$}{xi}}

In the remainder of the paper we will be interested in the Taylor (Maclaurin) polynomials of the function 
\begin{equation}\label{f def}
	f(z) = \xi(1/2 + z).
\end{equation}
The function $f$ is entire and possesses the symmetries $f(z)^* = f(z^*)$ and $f(-z) = f(z)$, the later of which follows from \eqref{relation}.
The Taylor polynomials $T_n$ inherit the symmetries of $f$; $T_n(f;z^*)^*=T_n(f;z) = T_{n}(f;-z)$, so that for any $n \in \N$, 
$i)$ $T_{2n+1}(f;z) = T_{2n}(f;z)$;
and $ii)$ zeros of $T_n$, excepting purely real or imaginary roots, come in quartets.
In what follows we will omit the dependence of the Taylor polynomials 
upon $f$ and write simply $T_{2n}(z)$ for $T_{2n}(f;z)$.

The exponential decay of $|\Gamma(z)|$ along vertical lines---the other factors in \eqref{xi} being polynomially bounded in $\Im(z)$---allows us to deform the set $\mathcal{S}$ in \eqref{ChangeThis} to an infinite vertical strip.
For any number $\lambda >0$, let 
\begin{equation}\label{set}
	\scal_\lambda = \left\{ z \in \C :\, | \Re z | < \lambda \right\}.
\end{equation}
Anticipating the introduction of a scaling parameter $\lambda = \lambda(n)$, 
and letting $\chi = \chi_{\scal_1}$ be the characteristic function of $\scal_1$, we have 
\begin{equation}\label{represent}
	\begin{aligned}
    	T_{2n-2}(\lambda z) 
    	= f(\lambda z) \lb \chi(z) - \frac{e^{n \phase(z) }}{\sqrt{n}}  h(z) \rb
	\end{aligned}
\end{equation}
where we have defined
\begin{gather}
	\label{phase}
		e^{n \phase(z)} := \frac {z^{2n} f(\lambda) }{f(\lambda z) }, \\
	\label{H}
		h(z) := \frac{ \sqrt{n}}{ 2\pi i } \int_{ \dscal_1 } 
		e^{-n \phase(s)} \frac{ds}{s-z}.
\end{gather}

\section{Preliminaries}

The methods of Korobov and Vinogradov produce the following zero free region (c.f.~\cite[\S 6.19]{Titchmarsh}) of $\zeta$ extending inside the critical strip: for any choice of $A>0$, $\zeta(s)$ has no zeros for $s = \sigma + it$,  $\sigma, t \in \R$ with $|t|$ large and $\sigma > 1 - \frac{A}{ (\log t)^{2/3} (\log \log t)^{1/3}}$ and we have the bounds
\begin{equation}\label{zeta bounds}
	\begin{gathered}
	|\zeta(s)|  = \bigo{ (\log t)^{2/3} ( 1 + |t|^{100(1-\sigma)^{3/2}}) },  
	\qquad 1/2 \leq \sigma \leq 1, \\
	\frac{\zeta'(s)}{\zeta(s)} = \bigo{(\log t)^{2/3} (\log\log t)^{1/3} }, \qquad 
	\frac{1}{\zeta(s)} = \bigo{(\log t)^{2/3} (\log\log t)^{1/3} }. \\
	\end{gathered}
\end{equation}
the best bounds of this type are those of Ford \cite{Ford}.

It follows that our rescaled function $f(\lambda z)$ is zero free in the domain 
\begin{equation}\label{zero-free}
	\zerofree = \left \{ z = x+ i y \in \C^+ \,:\,  x \geq \frac{1}{2\lambda} - \frac{A}{\lambda  (\log \lambda y )^{2/3} (\log\log \lambda y)^{1/3}} \right\}.
\end{equation}

Outside the critical strip we have the more elementary bound from \cite{Ford}
\begin{lemma}\label{lem:dlog zeta}
Let $s= \sigma + i t$ with $\sigma,t \in \R$ and $\sigma > 1$, then
\[
	\left| \frac{ \zeta'(s)}{\zeta(s)} \right| \leq \frac{1}{\sigma-1}.
\]
\end{lemma}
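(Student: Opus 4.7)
The plan is to first use the Dirichlet series representation of $-\zeta'/\zeta$ to reduce the bound to a real-variable statement, and then to establish the resulting real inequality through a monotonicity argument for the function $(\sigma-1)\zeta(\sigma)$.

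For $\Re s > 1$, the Euler product representation of $\zeta$ gives the absolutely convergent Dirichlet series
\[
	-\frac{\zeta'(s)}{\zeta(s)} = \sum_{n=1}^\infty \frac{\Lambda(n)}{n^s},
\]
where $\Lambda$ is the von Mangoldt function. Since $\Lambda(n) \geq 0$ and $|n^s| = n^\sigma$, the triangle inequality immediately yields
\[
	\left|\frac{\zeta'(s)}{\zeta(s)}\right| \leq \sum_{n=1}^\infty \frac{\Lambda(n)}{n^\sigma} = -\frac{\zeta'(\sigma)}{\zeta(\sigma)},
\]
so it suffices to prove the real inequality $-\zeta'(\sigma)/\zeta(\sigma) \leq 1/(\sigma-1)$ for $\sigma > 1$.

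Since $\zeta(\sigma) > 0$ on $(1,\infty)$, the real inequality is equivalent to the statement that the positive function $h(\sigma) := (\sigma-1)\zeta(\sigma)$ is non-decreasing on $(1,\infty)$; indeed
\[
	\frac{d}{d\sigma}\log h(\sigma) = \frac{1}{\sigma-1} + \frac{\zeta'(\sigma)}{\zeta(\sigma)}.
\]
The boundary behavior is consistent with monotonicity: $h(1^+) = 1$ (the residue of $\zeta$ at its pole) and $h(\sigma) \to \infty$ as $\sigma \to \infty$. To establish the monotonicity globally on $(1,\infty)$, I would use the elementary integral-comparison representation
\[
	h(\sigma) = 1 + (\sigma-1)\sum_{n=1}^\infty\left(\frac{1}{n^\sigma} - \int_n^{n+1}\frac{dx}{x^\sigma}\right),
\]
in which every summand is strictly positive (because $x \mapsto x^{-\sigma}$ is strictly decreasing on $[n,n+1]$), and then differentiate term-by-term, tracking the cancellation between the $n^{-\sigma}$ and integral contributions carefully.

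The main obstacle is precisely this monotonicity step. The asserted inequality is asymptotically sharp, since $-\zeta'(\sigma)/\zeta(\sigma) = 1/(\sigma-1) - \gamma + O(\sigma-1)$ as $\sigma \to 1^+$, so any estimate of $h'(\sigma)$ must be sufficiently delicate to capture the exact leading constant. In particular, crude estimates like the Chebyshev bound $\sum_{n \leq x}\Lambda(n) \lesssim x$ combined with Abel summation only yield the weaker inequality $-\zeta'(\sigma)/\zeta(\sigma) \leq C\sigma/(\sigma-1)$, which has the correct order but the wrong leading constant.
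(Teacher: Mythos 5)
Your reduction to the real axis is correct and is the same first step the paper takes: since $-\zeta'/\zeta$ has nonnegative Dirichlet coefficients, $|\zeta'(s)/\zeta(s)|\le-\zeta'(\sigma)/\zeta(\sigma)$. The gap is in the second half. Reformulating the real inequality as monotonicity of $h(\sigma)=(\sigma-1)\zeta(\sigma)$ is a correct equivalence, but you never establish that monotonicity: you propose to differentiate the representation $h(\sigma)=1+(\sigma-1)\sum_n\bigl(n^{-\sigma}-\int_n^{n+1}x^{-\sigma}\,dx\bigr)$ term by term and ``track the cancellation carefully,'' and you yourself flag this as the main obstacle. The difficulty is genuine: writing $h=1+(\sigma-1)g$, one has $h'=g+(\sigma-1)g'$, and the terms of $g'(\sigma)=\sum_n\bigl(\int_n^{n+1}x^{-\sigma}\log x\,dx-n^{-\sigma}\log n\bigr)$ have mixed signs (the $n=1$ term is positive, the large-$n$ terms are negative since $x^{-\sigma}\log x$ is eventually decreasing), so positivity of $h'$ does not follow from any termwise inspection. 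As written, the proposal restates the lemma rather than proving it.

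The missing idea --- and the one the paper uses --- is to bound $-\zeta'(\sigma)$ against $\zeta(\sigma)/(\sigma-1)$ directly, with no derivative of $h$ in sight. Write $\log m=\sum_{n=1}^{m-1}\log\frac{n+1}{n}$ and interchange the (nonnegative) double sum:
\[
-\zeta'(\sigma)=\sum_{m=2}^\infty\frac{\log m}{m^\sigma}
=\sum_{n=1}^\infty\Bigl(\sum_{m\ge n+1}m^{-\sigma}\Bigr)\log\Bigl(1+\frac1n\Bigr).
\]
Then $\sum_{m\ge n+1}m^{-\sigma}\le\int_n^\infty x^{-\sigma}\,dx=n^{1-\sigma}/(\sigma-1)$ and $\log(1+1/n)<1/n$ give $-\zeta'(\sigma)\le\frac{1}{\sigma-1}\sum_{n\ge1}n^{-\sigma}=\zeta(\sigma)/(\sigma-1)$, which is exactly $h'(\sigma)\ge0$ in rearranged form. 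Note also that both elementary bounds used here are strict yet the resulting inequality is sharp as $\sigma\to1^+$ only in the sense of leading order; your concern that crude Chebyshev-type bounds lose the constant is well taken, but the telescoping rearrangement above already preserves it.
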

\begin{proof}
For $\sigma>1$ we have $|\zeta'(s)/\zeta(s)| \leq -\zeta'(\sigma)/\zeta(\sigma)$ and
\begin{gather*}
	-\zeta'(\sigma) = \sum_{m=2}^\infty \frac{\log m}{m^\sigma} 
	= \sum_{m=2}^\infty \lb \sum_{n=1}^{m-1} \log \lp \frac {n+1}{n} \rp \rb m^{-\sigma}
	= \sum_{n=1}^\infty \lb \sum_{m \geq n+1} m^{-\sigma} \rb \log \lp \frac {n+1}{n} \rp.
\intertext{The result follows from bounding the interior sum by the integral $\int_n^\infty m^{-\sigma} dm$ and recalling that for $x>0$, $\log(1+x) < x$:}  
	-\zeta'(\sigma) \leq 
	\sum_{n=1}^\infty \lp \frac{ n^{1-\sigma}}{\sigma-1} \rp \frac {1}{n} 
	= \frac{\zeta(\sigma)}{\sigma-1}. \qedhere
\end{gather*}

\end{proof}

The above bounds on the logarithmic derivative, both near the strips edge and outside it, give a bound on the argument of $\zeta(s)$ at the edge of the critical strip. 
\begin{lemma}\label{lem:arg bound}
There exist $t_0>0$ such that for all $t> t_0$ we have
\[
	\arg \zeta(1 + it) \leq  \tfrac{2}{3}\log\!\log t + \bigo{\log\!\log\!\log t}.
\]
\end{lemma}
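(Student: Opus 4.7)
The plan is to exploit the identity
\[
\arg \zeta(1+it) = \arg \zeta(\sigma_0 + it) + \Im \int_{\sigma_0}^{1} \frac{\zeta'(u+it)}{\zeta(u+it)}\,du,
\]
valid for any $\sigma_0 > 1$, since $\zeta$ is zero-free on the closed half-plane $\Re s \geq 1$ and so $\log \zeta$ admits a continuous branch along the horizontal segment joining $\sigma_0 + it$ to $1 + it$. Taking $\sigma_0 = 2$ say, the first term is $O(1)$: from the Euler product,
\[
|\log \zeta(2+it)| \leq \sum_{p}\sum_{k\geq 1} \frac{1}{k\, p^{2k}} = O(1),
\]
so its imaginary part is uniformly bounded. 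The work is all in estimating the integral.

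The key observation is that the bound $|\zeta'/\zeta| \leq (\sigma-1)^{-1}$ of Lemma~\ref{lem:dlog zeta} is sharp-enough for $\sigma$ away from $1$, while the Korobov--Vinogradov bound from \eqref{zeta bounds}, which gives $\zeta'/\zeta = \bigo{(\log t)^{2/3}(\log\log t)^{1/3}}$ uniformly in the Korobov--Vinogradov zero-free region, is the only available bound very close to $\sigma = 1$. I therefore split the segment of integration at $\sigma = 1 + \delta$, where the cross-over parameter $\delta = \delta(t)$ is to be chosen to balance the two estimates. On the outer piece $1+\delta \leq u \leq 2$ Lemma~\ref{lem:dlog zeta} contributes
\[
\biggl| \int_{1+\delta}^{2} \frac{\zeta'(u+it)}{\zeta(u+it)}\,du \biggr| \leq \int_{1+\delta}^{2} \frac{du}{u-1} = \log \frac{1}{\delta} + O(1).
\]
On the inner piece $1 \leq u \leq 1+\delta$, the Korobov--Vinogradov bound gives
\[
\biggl| \int_{1}^{1+\delta} \frac{\zeta'(u+it)}{\zeta(u+it)}\,du \biggr| = \bigo{ \delta\, (\log t)^{2/3}(\log\log t)^{1/3}}.
\]

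Choosing $\delta = \bigl((\log t)^{2/3}(\log\log t)^{1/3}\bigr)^{-1}$ makes the inner contribution $O(1)$ and produces
\[
\log \frac{1}{\delta} = \tfrac{2}{3}\log\!\log t + \tfrac{1}{3}\log\!\log\!\log t + O(1).
\]
Taking imaginary parts, summing the three contributions, and absorbing the $\frac{1}{3}\log\log\log t$ and $O(1)$ terms into the stated error yields the claimed bound.

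The only delicate point is verifying that the Korobov--Vinogradov estimate for $\zeta'/\zeta$ holds uniformly on the short horizontal segment $[1, 1+\delta] + it$, so that the inner integral is genuinely $O(1)$; this is immediate from \eqref{zeta bounds} together with the fact that this segment lies strictly to the right of the Korobov--Vinogradov zero-free region \eqref{zero-free} (with $\lambda = 1$), provided $t$ is large enough so that the implied constants in \eqref{zeta bounds} behave as claimed. All other steps are routine.
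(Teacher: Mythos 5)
Your proposal is correct and follows essentially the same route as the paper: both decompose $\arg\zeta(1+it)$ by integrating $\zeta'/\zeta$ along the horizontal segment from $2+it$ to $1+it$, split that integral at a cross-over abscissa, apply Lemma~\ref{lem:dlog zeta} on the outer piece and the Korobov--Vinogradov bound from \eqref{zeta bounds} on the inner piece, and then choose the cross-over (your $\delta$, the paper's $q$) to balance the two contributions at $((\log t)^{2/3}(\log\log t)^{1/3})^{-1}$. The only cosmetic difference is in bounding $\arg\zeta(2+it)$: you invoke the Euler product directly, whereas the paper argues that $\Re\zeta>0$ along the vertical segment from $2$ to $2+it$ so the argument stays within $(-\pi,\pi]$; both are standard and yield the required $O(1)$.
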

\begin{proof}
Since $\zeta(2)>0$ and $\Re \zeta(2 + i \tau) \geq 1 - \sum_{n=1}^\infty n^{-2} > 0$ for all $\tau \geq 0$, $\Re \zeta$ is strictly positive on the vertical line from $s=2$ to $s=2+it$. 
It follows that $|\arg \zeta(2+it)| \leq \pi$. 
Using \eqref{zeta bounds} and Lemma~\ref{lem:dlog zeta} for all sufficiently large $t$ there exist a constant $A>0$, such that for any $q \in (0,1)$ we have
\begin{multline*}
	\left| \arg \zeta(1+it)  - \arg \zeta(2+it) \right| 
	\leq \int_{1+q}^2 \frac{d \sigma}{\sigma-1}  + A q (\log t)^{2/3} (\log\log t)^{1/3} \\
	= \log \frac{1}{q} + A q (\log t)^{2/3} (\log\log t)^{1/3}.
\end{multline*}
The minimizer of this last expression, as a function of $q$, is $q_0 = A^{-1}(\log t)^{-2/3} (\log\log t)^{-1/3}$.
Computing the minimum completes the proof.
\end{proof}

\subsection{The phase \texorpdfstring{$\phase(z)$}{} }
The phase, implicitly defined by \eqref{phase},
\begin{equation}\label{phase def}
	\phase(z) = 2 \log z + \frac{1}{n} \log \frac{ f(\lambda)}{f(\lambda z)},
\end{equation}
is analytic in any region in which $f(\lambda z) = \xi(1/2 + \lambda z)$ is zero free.  
In particular $\phase$ is well defined along the contour of integration $|\Re z| = 1$.
Moreover, the choice of branch can be chosen such that $\phase(z)$ is positive real for $z \in (1,\infty)$ and satisfies the symmetry $\phase(z) = \phase(-z)$. 

The following formula for $\phase$ is well suited for a large $\lambda$ expansion.
For any fixed $c>0$, if $|z|>c$ and $\lambda \gg 1$ we have
\begin{gather}\label{phase expand 0}
\begin{multlined}[.9\linewidth]
	\phase(z) = 2 \log z 
	+ \lp \frac{\lambda}{2n} \log \frac{\lambda}{2\pi}\rp (1-z) 
	- \frac{\lambda}{2n} \lb 1 - z + z \log z \rb  \\
	- \frac{1}{n} \log \zeta \lp \lambda z + \frac{1}{2} \rp
	+ \frac{1}{n} \, r(z;\lambda)
\end{multlined}	
\intertext{where the remainder $r(z; \lambda)$ is given by}
\begin{multlined}[.9\linewidth]
	r(z; \lambda) =
	\log \frac{\Gamma(\lambda/2+1/4)}{\Gamma(\lambda z/2+1/4)} 
	- \lp \frac{\lambda}{2} \log \frac{\lambda}{2} \rp (1-z) 
	+ \frac{\lambda}{2} \lb 1-z + z \log z \rb \\
	+ \log \lb  \frac{(\lambda^2 - 1/4) \zeta(\lambda+1/2)}
	{(\lambda^2 z^2 - 1/4) } 
	\rb. 
\end{multlined}
\end{gather}
This remainder term is bounded provided that $z$ stays away from its obvious singularities. More precisely, let $c>0$ be fixed, then using Stirling's expansion of $\log \Gamma(s)$, one may verify that 
\begin{equation}\label{phase remainder}
	r(z;\lambda) = \bigo{1}. \qquad  \Re z \geq \frac{1}{2\lambda} \text {  and } |z -  \frac{1}{2\lambda}| > c.
\end{equation}
The explicit $\zeta$ term in \eqref{phase expand 0} becomes meaningful only near the critical strip; elsewhere, it is comparable to the remainder $r$.
One can similarly compute the $z$-derivative of the phase:
\begin{align}\label{phase' expand}
	\partial_z \phase(z) = 
	\frac{2}{z} - \frac{\lambda}{2n} \log \frac{\lambda}{2\pi} 
	- \frac{\lambda}{2n} \log z 
	- \frac{\lambda}{n} \frac{\zeta'(\lambda z + 1/2)}{\zeta(\lambda z + 1/2)} 
	+ \frac{1}{n} \partial_z r(z;\lambda) .
\end{align}

The representation \eqref{represent} places the essential $n$-dependence of the Taylor polynomials in the phase $\phase$ defined by \eqref{phase} which appears in the exponential term of the integral \eqref{H}. 
As the following lemma shows, for large $n$ the phase has two stationary points outside the critical strip, and these points' magnitudes  increase with $n$. We choose the scaling parameter $\lambda = \lambda(n)$ according to Lemma~\ref{lem:crit pts} below precisely so that these stationary points lie at $z = \pm 1$ in the rescaled plane\footnote{By symmetry the stationary points must be opposites.}. This completes the definition of $T_{2n-2}(\lambda z)$ so that the representation \eqref{represent} is now well defined.

\begin{lemma}\label{lem:crit pts}
For all sufficiently large $n$ there is a unique choice of $\lambda = \lambda(n)$, with $\lambda > 1/2$ (\ie\ right of the shifted critical strip) satisfying 
$
	\partial_z \phase(z) \Big|_{z=1} = 
	2 - (\lambda/n) \partial_\lambda \log f(\lambda) = 0.
$
This choice of $\lambda$ satisfies the relation
\begin{equation}\label{lambda-func-asymp}
	2 - \frac{\lambda}{2n} \log \lp \frac{\lambda}{2\pi} \rp 
	= \bigo{\frac{1}{n}},
\end{equation}
and asymptotically 
\[
	\lambda = \lambda(n) 
	= \frac{4n}{\W \lp 2n / \pi \rp }\lb 1 + \bigo{n^{-1}} \rb.  	
\]
Here $\W(z)$ is the branch of the inverse function to $\W e^{\W} = z$ which is real and increasing for $z \in (-e^{-1},\infty)$ sometimes called the Lambert-$\W$ function\footnote{
For more information on $\W(z)$ see \S 4.13 of \cite{DLMF}
}. 

Moreover, for this choice of $\lambda$ the critical point at $z=1$ is simple and
\begin{equation}\label{phase constant}
	\phase''(1) = -2 + \bigo{ \frac{1}{\log n} }. 
\end{equation}
\end{lemma}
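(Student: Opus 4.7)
The plan is to use the expansion \eqref{phase' expand} of $\partial_z \phase$ to reduce the critical-point equation at $z=1$ to a transcendental equation whose leading part is solved exactly by the Lambert-$\W$ function, then perturb. Substituting $z=1$ into \eqref{phase' expand} and rearranging gives
\[
    \frac{\lambda}{2n}\log\frac{\lambda}{2\pi}
    = 2 - \frac{\lambda}{n}\frac{\zeta'(\lambda+1/2)}{\zeta(\lambda+1/2)} + \frac{1}{n}\partial_z r(1;\lambda).
\]
By Lemma~\ref{lem:dlog zeta}, $(\lambda/n)|\zeta'(\lambda+1/2)/\zeta(\lambda+1/2)| \leq \lambda/[n(\lambda-1/2)] = \bigo{1/n}$ for $\lambda$ large (and in fact this decays like $2^{-\lambda}/n$ from the Dirichlet series). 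To control $\partial_z r(1;\lambda)$ I would differentiate the explicit formula for $r$ in \eqref{phase expand 0} and apply Stirling's expansion of $\psi(x)=(\log\Gamma)'(x)$: the $-(\lambda/2)\psi(\lambda/2+1/4)$ piece has leading behavior $-(\lambda/2)\log(\lambda/2)$, which cancels the explicit $(\lambda/2)\log(\lambda/2)$ subtraction, leaving $\partial_z r(1;\lambda) = \bigo{1}$. Combined, the right-hand side above equals $2 + \bigo{1/n}$.

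Next I solve the main equation $F(\lambda) := (\lambda/(2n))\log(\lambda/(2\pi)) = 2$ in closed form. Set $\lambda_0 := 4n/\W(2n/\pi)$; from the defining relation $\W e^\W = 2n/\pi$ one has $\W + \log \W = \log(2n/\pi)$, whence $\log(\lambda_0/(2\pi)) = \log(2n/\pi) - \log \W = \W$, and so $F(\lambda_0) = (4n/(2n\W))\W = 2$ exactly. Since $F'(\lambda) = (2n)^{-1}[\log(\lambda/(2\pi))+1]$ is strictly positive and increasing for $\lambda > 2\pi/e$ with $F(\lambda) \to \infty$, there is a unique root $\lambda = \lambda(n) > 1/2$ of the full equation for $n$ sufficiently large. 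A first-order Taylor expansion of $F$ about $\lambda_0$, together with $F'(\lambda_0) \sim \log n/(2n)$, yields $\lambda - \lambda_0 = \bigo{1/n}/F'(\lambda_0) = \bigo{1/\log n}$, and so $\lambda = \lambda_0[1 + \bigo{1/n}]$. This gives the claimed asymptotic for $\lambda(n)$ and also yields \eqref{lambda-func-asymp}.

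Finally, for \eqref{phase constant} I differentiate \eqref{phase' expand} once more in $z$ and evaluate at $z=1$:
\[
    \phase''(1) = -2 - \frac{\lambda}{2n} - \frac{\lambda^2}{n}\left(\frac{\zeta'}{\zeta}\right)'\!(\lambda+1/2) + \frac{1}{n}\partial_z^2 r(1;\lambda).
\]
The term $-\lambda/(2n) = -2/\log(\lambda/(2\pi)) + \bigo{1/(n\log n)} = \bigo{1/\log n}$ by the main equation. The $(\zeta'/\zeta)'$ contribution is $-(\lambda^2/n)$ times a quantity of order $2^{-\lambda}$ (from the Dirichlet series), hence super-exponentially small. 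The term $(1/n)\partial_z^2 r(1;\lambda)$ is again $\bigo{1/n}$ after one more Stirling calculation: the leading $-(\lambda/2)^2 \psi'(\lambda/2+1/4) \sim -\lambda/2$ cancels the explicit $\lambda/(2z)|_{z=1}=\lambda/2$ arising from differentiating $(\lambda/2)[1-z+z\log z]$. Summing yields $\phase''(1) = -2 + \bigo{1/\log n}$, which is \eqref{phase constant} and in particular certifies simplicity of the critical point for large $n$. The main technical obstacle throughout is the bookkeeping for the $r$-derivatives and the required Stirling-style cancellations; everything else is driven cleanly by the Lambert-$\W$ inversion.
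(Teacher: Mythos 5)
Your proposal is correct and follows essentially the same route as the paper: both reduce $\partial_z \phase(1)=0$ via \eqref{phase' expand} to the leading equation $\frac{\lambda}{2n}\log\frac{\lambda}{2\pi}=2+\bigo{1/n}$, solve the leading part exactly through the Lambert-$\W$ identity $\log(\lambda_0/(2\pi))=\W(2n/\pi)$, and absorb the $\zeta'/\zeta$ term and the Stirling-type cancellations in $r$ into the error. The only substantive difference is that the paper packages the perturbation step as an implicit function theorem argument with the ansatz $\lambda=\frac{4}{\eps\W(2/(\eps\pi))}(1+\eps\nu)$, which forces the verification $\widehat{G}_\nu(0,0)=-2$, i.e.\ control of the $\lambda$-derivative of the $\bigo{1/n}$ corrections that is also what underwrites uniqueness; you argue instead from monotonicity of the leading model $F$ alone, so to make the uniqueness claim airtight you should additionally check that those derivatives are dominated by $F'(\lambda)\sim \log n/(2n)$ (your explicit treatment of $\phase''(1)$, which the paper's written proof leaves implicit, is fine).
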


\begin{proof}
As $f$ is entire, $\partial_\lambda \log f(\lambda)$ is bounded for any finite $\lambda$ outside the open critical strip as $f(\lambda)$ is zero free in this region. 
It follows that any root $\lambda(n)$ of $2 - (\lambda/n) \partial_\lambda \log f(\lambda)$ outside the strip must grow without bound as $n \to \infty$. 

Let $\eps = 1/n$,
\[
		G(\epsilon,\lambda) = 2 - \epsilon \lambda \partial_\lambda \log f(\lambda), 
		\qquad 
		\lambda(\epsilon, \nu) = \frac{4}{\epsilon W(2/(\epsilon \pi))} \lb 1 + \epsilon \nu \rb,
\]
and let $\widehat{G}(\eps, \nu) = \epsilon^{-1} G(\epsilon,\lambda(\epsilon,\nu))$. 
As $\phase'(1) = 0$ is equivalent to $G(\eps, \lambda) = 0$, the theorem is proved if we can show that $\widehat{G}(\eps,\nu) = 0$ implicitly defines a unique function $\nu(\eps)$ which is bounded for $\eps$ near $0$. 
Using \eqref{phase' expand} we have
\begin{gather*}
	G(\epsilon,\lambda) 
	= 2 - \frac{\epsilon \lambda}{2} \log\lp \frac{\lambda}{2\pi} \rp 
	- \eps\, R(\lambda)
\shortintertext{where $R$ is given by}
	R(\lambda) := \frac{\lambda}{2} 
	\lb 
		\psi \lp \frac{\lambda}{2} + \frac{1}{4} \rp 
		- \log \frac{\lambda}{2} + \frac{4}{\lambda} \lp 1 + \frac{1}{4\lambda^2 -1} \rp
		+ \frac{2 \zeta'(\lambda+1/2)}{\zeta(\lambda+1/2)} 
	\rb	
\end{gather*}
Here $\psi$ denotes the digamma function, the logarithmic derivative of $\Gamma$. For $\lambda$ large and $|\arg \lambda| < \pi$ Stirling's series gives 
$\psi(\lambda/2+1/4) - \log(\lambda/2) = 1/(2\lambda)+ \bigo{ \lambda^{-2}}$. 
So as $\lambda \to \infty$ the leading order terms in $R$ cancel and 
$R(\lambda) =  7/(2\lambda) + \bigo{\lambda^{-1}}$. Inserting this fact into $G(n^{-1}, \lambda)=0$ shows that \eqref{lambda-func-asymp} is the correct asymptotic model. 

The defining relation $\W e^{\W} = 2/(\eps \pi)$ for $\W = \W(2/(\eps \pi))$ implies, by taking logarithms, that $\W^{-1} \log(2/(\eps \pi \W) ) = 1$. 
After some simplification we have 
\[
	\widehat{G}(\eps,\nu) = -2\nu - \frac{2\nu}{\W(2/(\eps \pi))} (1+ \eps \nu) \frac{\log(1+ \eps\nu)}{\eps \nu} 
	-R(\lambda(\eps, \nu)).
\]
Using the fact that $\W(2n/\pi) = \bigo{\log n}$ and computing the derivative of $R$ one may verify that $\widehat{G}(0,0) = 0$ and $\widehat{G}_\nu(0,0) = -2$. Thus, we can apply the implicit function theorem to conclude that a bounded (locally in $\eps$) solution
$\nu = \nu(\eps)$ exists in a neighborhood of $\eps = 0$. 
\end{proof}

Lemma~\ref{lem:crit pts} has the following useful and immediate corollary:
\begin{cor}
For $\lambda = \lambda(n)$ as given in Lemma~\ref{lem:crit pts}
the asymptotic expansion of the phase becomes
\begin{gather}\label{phase expand}
	\phase(z) = 2( \log z + 1 - z ) - \frac{\lambda}{2n} (1 - z + z \log z)
		- \frac{1}{n} \log \zeta \lp \lambda z + \frac{1}{2} \rp
		+ \frac{1}{n} \, \tilde{r}(z; \lambda), \\
\shortintertext{where}
	\nonumber
	\frac{1}{n} \, \tilde{r}(z; \lambda) = 
	\frac{1}{n} \, r(z; \lambda) 
	+ \lp \frac{\lambda}{2n}  \log \frac{\lambda}{2\pi} - 2 \rp (1-z)	
\end{gather}
satisfies the same boundedness conditions \eqref{phase remainder} as the original $r(z;\lambda)$. 
\end{cor}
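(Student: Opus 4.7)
The corollary is essentially a bookkeeping rearrangement of the expansion \eqref{phase expand 0}, combined with the asymptotic identity \eqref{lambda-func-asymp} secured in Lemma~\ref{lem:crit pts}. The key observation is that the combination $2(\log z + 1 - z)$ is the \emph{natural} leading-order piece once $\lambda(n)$ is pinned down by the critical-point condition $\phase'(1) = 0$, since $2(\log z + 1 - z)$ has a double zero with negative second derivative at $z = 1$, consistent with \eqref{phase constant}.

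My plan is to start from \eqref{phase expand 0} and isolate $2(\log z + 1 - z)$ via the identity
\[
    2 \log z \;=\; 2(\log z + 1 - z) + 2(z-1).
\]
Inserting this into \eqref{phase expand 0}, the stray $2(z-1)$ combines with the term $(\lambda/(2n)) \log(\lambda/(2\pi)) (1-z)$ to produce a single linear-in-$(1-z)$ contribution
\[
    \lb \frac{\lambda}{2n} \log \frac{\lambda}{2\pi} - 2 \rb (1-z),
\]
which I would absorb into the redefined remainder. Matching coefficients with the asserted form yields precisely
\[
    \frac{1}{n}\, \tilde{r}(z;\lambda) \;=\; \frac{1}{n}\, r(z;\lambda) + \lp \frac{\lambda}{2n} \log \frac{\lambda}{2\pi} - 2 \rp (1-z),
\]
which is the formula stated in the corollary.

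For the boundedness claim, I would invoke \eqref{lambda-func-asymp} from Lemma~\ref{lem:crit pts}, which gives $(\lambda/(2n)) \log(\lambda/(2\pi)) - 2 = \bigo{1/n}$ for the chosen $\lambda = \lambda(n)$. Multiplying through by $n$, the correction added to $r$ is $n \cdot \bigo{1/n} (1-z) = \bigo{1} \cdot (1-z)$, which is $\bigo{1}$ on any region where $z$ remains bounded. Coupled with the pre-existing estimate $r(z;\lambda) = \bigo{1}$ on $\Re z \geq 1/(2\lambda)$, $|z - 1/(2\lambda)| > c$ from \eqref{phase remainder}, one concludes $\tilde{r}(z;\lambda) = \bigo{1}$ on the same region, as claimed.

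There is no real obstacle here: the corollary is a direct algebraic consequence of the lemma, and the only ``input'' beyond rearrangement is the asymptotic pinning \eqref{lambda-func-asymp}. The substance of the result is not the estimate itself but the perspective it provides: having fixed $\lambda(n)$ via the critical-point equation, the phase naturally splits into a universal ``Szeg\H{o}-type'' piece $2(\log z + 1 - z)$ (responsible for the saddle at $z=1$), a subleading $\lambda/n$ correction $-(\lambda/(2n))(1 - z + z \log z)$, the $\zeta$-contribution that becomes relevant only near the critical strip, and an otherwise harmless $\bigo{1/n}$ remainder.
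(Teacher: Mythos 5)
Your proposal is correct and matches the paper's intent exactly: the paper presents this as an ``immediate'' corollary of Lemma~\ref{lem:crit pts} with no written proof, and the intended argument is precisely your rearrangement $2\log z = 2(\log z + 1 - z) + 2(z-1)$ combined with the estimate \eqref{lambda-func-asymp} to show the added term $n\lp\tfrac{\lambda}{2n}\log\tfrac{\lambda}{2\pi}-2\rp(1-z)$ is $\bigo{1}$. Your caveat that this requires $z$ bounded is a fair (and slightly more careful) reading of the boundedness claim.
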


We complete this section by showing that $\partial_z \phase$ has no other bounded zeros outside the critical strip. 

\begin{lemma}\label{lem:phase'}
	Let $\lambda = \lambda(n)$ be as given by Lemma~\ref{lem:crit pts} and fix $R > \eps > 0$.
	Then for any $z$ such that 
\begin{equation}\label{phase'-1}
	z \in \zerofree 
	\text{\quad and \quad} \eps \leq |z| \leq R
\end{equation}
we have	
\begin{equation}\label{phase'-0}
	\partial_z \phase(z) = 2(z^{-1}-1) + \bigo[R]{ \lp \frac{\log\!\log n}{\log n} \rp^{1/3} }.
\end{equation}
	Additionally, given a fixed $\rho \in (0,1)$, if $|z-1| > \rho$, 
	then there exist $n_0 = n_0(R,\rho) > 0$ such that for all $n > n_0$ 
	we have
\begin{equation}\label{phase'-2}
	\left| \partial_z \phase(z) \right| \geq \rho. 
\end{equation}	
\end{lemma}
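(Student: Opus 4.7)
The plan is to use the asymptotic form of $\partial_z \phase$ from \eqref{phase' expand} and show that the dominant contribution is the elementary term $2(z^{-1}-1)$, with everything else controlled by the zero-free region estimates of Korobov--Vinogradov and the asymptotics of $\lambda(n)$.

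First, I would invoke Lemma~\ref{lem:crit pts} to write $-\frac{\lambda}{2n}\log(\lambda/2\pi) = -2 + \bigo{1/n}$, which already gives the constant $-2$ in \eqref{phase'-0}. Inserting this into \eqref{phase' expand} reduces the task to bounding the three remaining terms:
\begin{equation*}
-\frac{\lambda}{2n}\log z,
\qquad
-\frac{\lambda}{n}\frac{\zeta'(\lambda z+1/2)}{\zeta(\lambda z+1/2)},
\qquad
\frac{1}{n}\partial_z r(z;\lambda),
\end{equation*}
on the compact annulus $\eps\le|z|\le R$ intersected with $\zerofree$. Since $\lambda(n)=4n/\W(2n/\pi)[1+\bigo{n^{-1}}]$ by Lemma~\ref{lem:crit pts} and $\W(2n/\pi)\asymp \log n$, we have $\lambda/n = \bigo{1/\log n}$. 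Combined with the boundedness of $\log z$ on the annulus, the first term is $\bigo[R,\eps]{1/\log n}$.

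The middle term is the dominant error, and this is where I would spend the most care. When $z \in \zerofree$ with $|z|\le R$, the point $\lambda z+1/2$ either lies in the Korobov--Vinogradov zero-free region inside the strip, or to the right of it. In the first case, the bound $\zeta'/\zeta = \bigo{(\log t)^{2/3}(\log\!\log t)^{1/3}}$ from \eqref{zeta bounds} applies with $t=\lambda \Im z = \bigo{n}$, giving $\zeta'/\zeta = \bigo{(\log n)^{2/3}(\log\!\log n)^{1/3}}$. In the second case, Lemma~\ref{lem:dlog zeta} gives a much better bound. Multiplying by $\lambda/n = \bigo{1/\log n}$ produces the advertised error $\bigo[R]{(\log\!\log n/\log n)^{1/3}}$, which swallows all the other error terms.

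For the remainder term $\partial_z r(z;\lambda)$, I would compute it explicitly by differentiating \eqref{phase expand 0}: the main contribution is $-\tfrac{\lambda}{2}\psi(\lambda z/2+1/4) + \tfrac{\lambda}{2}\log(\lambda z/2) + \bigo{1}$. Stirling's expansion $\psi(w) = \log w + \bigo{1/w}$ for $|w|$ large and bounded away from the negative real axis (which is guaranteed by $|z|\ge\eps$ and the restriction to $\Re z \geq 1/(2\lambda)$) produces a cancellation of the leading $\tfrac{\lambda}{2}\log(\lambda z/2)$ terms, leaving $\partial_z r(z;\lambda) = \bigo[R,\eps]{1}$; hence $\tfrac{1}{n}\partial_z r = \bigo{1/n}$. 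Combining the three estimates yields \eqref{phase'-0}.

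For the lower bound \eqref{phase'-2}, note that $|2(z^{-1}-1)| = 2|z-1|/|z| \ge 2\rho/R$ whenever $|z-1|>\rho$ and $|z|\le R$. Since the error term in \eqref{phase'-0} tends to zero uniformly on the annulus as $n\to\infty$, choosing $n_0$ so that it is smaller than $\rho/R$ (or more generally any positive constant less than $2\rho/R$, relabeling if needed) yields the stated lower bound for $n>n_0$. The main obstacle throughout is the $\zeta'/\zeta$ estimate, which requires careful use of the Korobov--Vinogradov zero-free region and the precise scaling of $\lambda(n)$; the rest is mostly Stirling bookkeeping.
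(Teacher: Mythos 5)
Your proposal is correct and follows essentially the same route as the paper: differentiate the expanded phase, use Lemma~\ref{lem:crit pts} to get $\lambda/n=\bigo{1/\log n}$, bound the $\zeta'/\zeta$ term via \eqref{zeta bounds} in the zero-free region, and handle the remainder by Stirling. The only (cosmetic) difference is the final step: your lower bound $2|z-1|/|z|\ge 2\rho/R$ forces a relabeling of the constant when $R>2$, whereas the paper uses $|z|\le 1+|z-1|$ and the monotonicity of $t\mapsto 2t/(1+t)$ to get $2|z^{-1}-1|\ge 2\rho/(1+\rho)>\rho$, which yields \eqref{phase'-2} with the constant exactly as stated.
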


\begin{proof}
	Differentiating \eqref{phase expand} one has
	\begin{equation}\label{phase' proof}
		\partial_z \phase(z) =  2(z^{-1} - 1 )  
		  -\frac{\lambda}{n} 
		\frac{ \zeta' ( \lambda z + 1/2 ) }{\zeta(\lambda z + 1/2 )} 
		- \frac{\lambda}{2n} \log z + \frac{1}{n} \partial_z \tilde r(z;\lambda) 
	\end{equation}
Then for any $z$ as described in \eqref{phase'-1} we use \eqref{zeta bounds} to bound the $\zeta'/\zeta$ term in the expression above and note that Lemma~\ref{lem:crit pts} implies that $\lambda/n = \bigo{(\log n)^{-1}}$ to arrive at \eqref{phase'-0}. The last statement follows from the fact that for $|z-1|>\rho$, $2|z^{-1}-1| \geq 2\rho/(1+\rho) = \rho + \frac{\rho(1-\rho)}{1+\rho}$.
Then using \eqref{phase'-0} it is clear that we may choose $n_0(R,\rho)$ such that \eqref{phase'-2} is satisfied whenever $n> n_0$.
\end{proof}

\section{Uniform approximation of \texorpdfstring{$T_{2n}(z)$}{the Taylor polynomials} in the plane}
In this section we construct in a piecewise fashion a uniform approximation of the function $h(z)$ (defined by \eqref{H}).
Inserting this approximation into the representation of the Taylor polynomials $T_{2n}(\lambda z)$ in \eqref{represent} immediately yields a uniform asymptotic representation of the rescaled Taylor polynomials in the plane; this is the result of our Theorem~\ref{thm:T asymp} below. 

Lemma~\ref{lem:crit pts} implies that the contour integral \eqref{H} defining $h$ has two regular stationary points at $z = \pm 1$ and is otherwise non-stationary.
Specifically,
\begin{equation}\label{w relation}
	w^2 = \phase(z) = \frac{\phase''(1)}{2}  (z-1)^2 \lb 1 + \bigo{z-1}  \rb 
\end{equation}
defines a map $w = w(z)$ which, when restricted to any sufficiently small neighborhood $\ball_{1,\delta}$ of $z=1$ (or $\ball_{-1,\delta}$ of $z=-1$), is an invertible conformal map onto a bounded neighborhood of $w = 0$. We choose the branch such that $w$ maps $\dscal_1$ locally to a nearly horizontal contour in the $w$-plane oriented left-to-right:
\begin{equation}\label{zeta local}
	w(z) = -i \sqrt{ \dfrac{ -\phase''(1) }{2} } (z-1) \lb 1 + \bigo{ (z-1) } \rb
	\quad z \in \ball_{1,\delta},
\end{equation}
and enforce symmetry by demanding that $w(z) = w(-z)$ for $z \in \ball_{-1,\delta}$.
The estimate on $\phase''(1)$ in Lemma~\ref{lem:crit pts} implies that 
$\sqrt{-\phase''(1)/2} = 1 + \bigo{1/\log n}$ so that $w=w(z)$ is asymptotically isometric for $z$ near 1 and $n \gg 1$. We fix the neighborhoods $\ball_{\pm1, \delta}$ by requiring that $B_{\pm 1, \delta}$ are, for any sufficiently small $\delta>0$, the two pre-images of the disk of radius $\delta$ in the $w$-plane:
\begin{equation}\label{ball def}
	w \lp \ball_{\pm 1, \delta} \rp = \{ w \in \C \,:\, |w| < \delta \}
\end{equation}
and we let $\ball_\delta = \ball_{1,\delta} \cup \ball_{-1,\delta}$.

For $z$ bounded away from $\pm 1$ a standard stationary phase calculation gives   
\begin{equation}\label{h0}
	h(z) =  h_0(z)\lb 1 + \bigo{n^{-1}}\rb , \qquad h_0(z) = \frac{1}{ \sqrt{2\pi | \phase''(1) |} } \frac{2}{1-z^2}.
\end{equation}
As $z \to \pm 1$ this approximation breaks down as the pole of the integrand in \eqref{H} at $s=z$ approaches the stationary points. At these points a more careful analysis is required which we give below; we prove the following theorem.
\begin{thm}\label{thm:T asymp}
Let $\lambda = \lambda(n)$ be as described in Lemma~\ref{lem:crit pts}, $\chi(z)$ the characteristic function of the set $| \Re z | < 1$, and $h_0(z)$, defined by \eqref{h0},  the leading order stationary phase approximation of $h(z)$. 
Then as $n \to \infty$ the Taylor polynomials described by \eqref{represent} admit the asymptotic expansion
\begin{multline}\label{T asymp}
	T_{2n-2}( \lambda z) = T_{2n-1}( \lambda z) =
	\begin{dcases}
		f( \lambda z) \lb 
		\chi(z) 
		- \frac{ e^{n \phase(z)} }{ \sqrt{n} } h_0(z) \lp  
		1 + \Ecal(z) \rp
		\rb
		& z \in \C \backslash \ball_\delta 
		\\ \bigskip
		f( \lambda z) 
		\lb 
		\frac{1}{2} \erfc(i \sqrt{n} w(z) ) 
		- \frac{e^{n\phase}(z) }{ \sqrt{n} } \Ecal(z)
		\rb
		& z \in  \ball_\delta.
	\end{dcases}
\end{multline}
where the residual error function $\Ecal(z)$ is bounded, analytic in $\C \backslash \lp (\dscal_1 \backslash \ball_\delta) \cup \partial \ball_\delta \rp$, and satisfies
\begin{equation}
	\Ecal(z) = \begin{dcases}
		\bigo{n^{-1}} 
			& z \in \ball_\delta^c \\
		h_0(z) + \frac{1}{2i \sqrt{\pi} w(z)} + \bigo{n^{-1}}
			& z \in \ball_\delta.
	\end{dcases}
\end{equation}
\end{thm}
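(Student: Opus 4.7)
My plan is to perform a steepest-descent analysis on the Cauchy-type integral \eqref{H} defining $h(z)$. By Lemma~\ref{lem:crit pts}, $\phase$ has simple saddle points at $s=\pm 1$ on the contour $\dscal_1$, with $\phase''(\pm 1) = -2 + \bigo{1/\log n}$. Because this Hessian is real and negative, the vertical segments of $\dscal_1$ near these points coincide, to leading order, with the steepest-descent directions for the kernel $e^{-n\phase(s)}$; no serious deformation is needed near $\pm 1$. Hence the task is to (i) evaluate two local contributions from neighborhoods of $s=\pm 1$, and (ii) show that the remainder of the contour contributes only exponentially small errors. For step (ii) I would invoke Lemma~\ref{lem:phase'} together with the bounds \eqref{zeta bounds} to show that, after at most a mild deformation of $\dscal_1\setminus \ball_\delta$, one has $\Re\phase(s)\geq c>0$ uniformly; combined with the prefactor $e^{n\phase(z)}/\sqrt n$ from \eqref{represent}, this produces an $\bigo{e^{-cn}}$ global tail contribution, uniform in $z$ on compact sets.

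For the local analysis near $s=1$ (the $s=-1$ piece is handled by the symmetry $\phase(z)=\phase(-z)$), I would introduce the conformal coordinate $w=w(s)$ via \eqref{w relation}, mapping $\dscal_1\cap \ball_{1,\delta}$ onto (approximately) the real segment $(-\delta,\delta)$ and the exponential into a pure Gaussian $e^{-nw^2}$. Setting $u = w(z)$, I decompose
\[
\frac{s'(w)}{s(w)-z} \;=\; \frac{1}{w-u} \;+\; R(w;u),
\]
with $R(\cdot\,;u)$ analytic in $w$ near $0$. The singular piece, combined with the $e^{n\phase(z)}$ prefactor from \eqref{represent} and with the characteristic function $\chi(z)$, collapses (using the identity $u^2=\phase(z)$) via the Faddeeva identity
\[
\frac{1}{2\pi i}\int_{\R}\frac{e^{-t^{2}}}{t-\zeta}\,dt \;=\; \tfrac{1}{2}\,e^{-\zeta^{2}}\,\erfc(-i\zeta), \qquad \Im\zeta>0,
\]
together with the elementary identity $\erfc(x)+\erfc(-x)=2$, to precisely the $\tfrac{1}{2}\erfc(i\sqrt{n}\,w(z))$ displayed in \eqref{T asymp}. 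The choice of branches and signs here is dictated by the orientation of $\dscal_1$ and by which side of $\dscal_1$ the point $z$ lies on: when $s=z$ crosses the deformed contour, a residue contribution of $e^{-n\phase(z)}$ is picked up, and this residue is precisely what the $\chi(z)$ term compensates for through the $\erfc$-symmetry.

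The regular piece $\frac{\sqrt n}{2\pi i}\int e^{-nw^{2}} R(w;u)\,dw$ is a classical Gaussian integral: Watson's lemma applied to an expansion of $R(w;u)$ in powers of $w$ yields a leading-order Gaussian contribution plus an $\bigo{n^{-1}}$ correction. Using $s(0)=1$ and $s'(0) = i/\sqrt{-\phase''(1)/2} = i(1+\bigo{1/\log n})$, the leading contribution is
\[
R(0;u) = \frac{s'(0)}{1-z} + \frac{1}{u} \;=\; h_{0}(z)\cdot(\text{normalization}) + \frac{1}{w(z)},
\]
which after accounting for the Gaussian normalization $1/(2i\sqrt\pi)$ delivers exactly the $h_{0}(z) + \tfrac{1}{2i\sqrt\pi\, w(z)} + \bigo{n^{-1}}$ description of $\mathcal E$ inside $\ball_\delta$ claimed in the theorem. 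Outside $\ball_\delta$ the pole $s=z$ is uniformly bounded away from $\pm 1$, so the same decomposition gives the standard stationary-phase leading term $h_{0}(z)(1+\bigo{n^{-1}})$, recovering \eqref{h0} and yielding $\mathcal E(z)=\bigo{n^{-1}}$.

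The hard part, in my view, is not the leading-order calculation but the uniformity and matching required across the three regimes $\ball_\delta^{c}$, $\partial\ball_\delta$, and $\ball_\delta$. In particular, one must verify that the apparent singularities of $h_{0}(z)$ and of $\tfrac{1}{2i\sqrt\pi\,w(z)}$ at $z=\pm 1$ cancel in their sum (as they must, both being residues of the same principal-part subtraction), so that $\mathcal E(z)$ is genuinely bounded inside $\ball_\delta$. Achieving the $\bigo{n^{-1}}$ remainder uniformly in $z$ requires bounding $\partial_{w}^{k} R(w;u)$ uniformly in $u$ on $\overline{\ball_\delta}$, which rests on the uniform bound \eqref{phase remainder} for $\tilde r$ and the estimate \eqref{phase constant} for $\phase''(1)$. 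Finally, the claimed analyticity of $\mathcal E$ off $(\dscal_1\setminus \ball_\delta)\cup \partial\ball_\delta$ is forced by the definition: $T_{2n-2}(\lambda z)$ is entire, and $\mathcal E$ is obtained by solving \eqref{T asymp} for it in each region, so discontinuities occur only where the piecewise \emph{form} of the main term changes.
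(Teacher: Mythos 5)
Your proposal is correct in outline and produces the right expansion, but it reaches Theorem~\ref{thm:T asymp} by a genuinely different route than the paper. The paper's proof is a two-line reduction to Lemma~\ref{lem:r}, which is a parametrix/jump-matching argument in the Riemann--Hilbert style: the local model $k(z)=\sqrt{n}\,\hat k(\sqrt{n}\,w(z))$ is written in closed form by solving the ODE $\hat k'+2s\hat k=i/\sqrt{\pi}$, the identity \eqref{HK local jumps} shows $k$ carries exactly the same additive jump across $\dscal_1\cap\ball_\delta$ as $h$, so $\Ecal=h-k$ is automatically analytic in $\ball_\delta$, and the bounds then fall out of the Cauchy-integral representation \eqref{r cauchy} of $\Ecal$ over $(\dscal_1\setminus\ball_\delta)\cup\partial\ball_\delta$, using the exponential smallness of $e^{-n\phase}$ on the tails and the boundedness of the Cauchy projection operators. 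You instead do classical uniform stationary phase directly on \eqref{H}: pole subtraction in the conformal variable $w$, exact evaluation of the singular piece via the Faddeeva identity (which reproduces the paper's $k$, including the $\chi(z)$ bookkeeping via $\erfc(x)+\erfc(-x)=2$), and Watson's lemma on the regular piece $R(w;u)$. Both work, and your leading-order bookkeeping is consistent: $\tfrac{1}{2i\sqrt{\pi}}R(0;w(z))=\tfrac{1}{\sqrt{2\pi|\phase''(1)|}}\tfrac{1}{1-z}+\tfrac{1}{2i\sqrt{\pi}\,w(z)}$ combines with the ordinary contribution of the saddle at $-1$ to give exactly the $h_0(z)+\tfrac{1}{2i\sqrt{\pi}\,w(z)}$ of the theorem. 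What the paper's route buys is that the analyticity of $\Ecal$ in $\ball_\delta$ and the uniformity of the $\bigo{n^{-1}}$ bound (including for $z$ approaching the jump contour and $z\to\infty$) come essentially for free from operator bounds; your route makes the origin of each term transparent but leaves you to verify by hand the uniformity in $u$ of the Watson's-lemma remainder and the residue accounting as $z$ crosses $\dscal_1$, both of which you correctly flag as the delicate points.
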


\begin{cor}\label{cor:1}
	Let $\lambda = \lambda(n)$ be as described in Lemma~\ref{lem:crit pts} and $e^{n\phase(z)}$ be as defined in \eqref{phase}. Define
		\[
		\begin{aligned}
		\Sz &= 
		\left\{ 
		z \in \C \,:\, \left| \Re z \right| < 1 \text{ and } 
		\left| e^{\phase(z)} \right|  < 1 \right\}, \\ 
		\nSz_{-} &= 
		\left\{
		z \in \C \,:\, \left| \Re z \right| > 1 \text{ and }
		\left| e^{\phase(z)} \right| < 1  \right\}, \\
		\nSz_{+} &= 
		\left\{
		z \in \C \,:\, 
		\left| e^{\phase(z)} \right| > 1 \right\}.
		\end{aligned}
	\]
	Then the relative error satisfies 
	\begin{align*}
		\lim_{n \to \infty} 
		\left| \frac{ T_{2n}( \lambda z) }{ \xi(1/2 + \lambda z) } - 1 \right |
		 &= 0 
		 \qquad 
		 \ z \in \Sz, \\
		\lim_{n \to \infty}
		\left| \frac{ T_{2n}( \lambda z) }{ \xi(1/2 + \lambda z) } \right |
		&= \begin{cases}
			0 & z \in \nSz_{-} \\
			\infty & z \in \nSz_{+}
		\end{cases}
	\end{align*}		
\end{cor}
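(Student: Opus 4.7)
My plan is to derive the corollary directly from Theorem~\ref{thm:T asymp} by dividing both sides of \eqref{T asymp} by $f(\lambda z)=\xi(\tfrac12+\lambda z)$, which is nonzero on each of the three regions. Fix $z$ in the interior of one of $\Sz,\nSz_{\pm}$; in particular $z\neq \pm 1$, so for any $\delta<\tfrac12\min(|z-1|,|z+1|)$ and all sufficiently large $n$---using that the conformal map $w$ is asymptotically isometric near $\pm 1$ by the discussion following \eqref{zeta local}---we have $z\notin \ball_\delta$. The first branch of \eqref{T asymp} then gives
\[
\frac{T_{2n}(\lambda z)}{\xi(\tfrac12+\lambda z)} \;=\; \chi(z) \;-\; \frac{e^{n\phase(z)}}{\sqrt{n}}\, h_0(z)\bigl(1+\Ecal(z)\bigr),
\]
where $h_0(z)$ is bounded on compact subsets avoiding $\{\pm 1\}\subset \ball_\delta$ and $\Ecal(z)=\bigo{n^{-1}}$.

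From here the three cases reduce to elementary bookkeeping on the factor $|e^{\phase(z)}|$. On $\Sz$, $\chi(z)=1$ and $|e^{\phase(z)}|<1$, so $|e^{n\phase(z)}|/\sqrt{n}$ decays exponentially; the ratio therefore tends to $1$. On $\nSz_-$, $\chi(z)=0$ and the subtracted term again decays exponentially, giving limit $0$. On $\nSz_+$, $|e^{\phase(z)}|>1$, so $|e^{n\phase(z)}|/\sqrt{n}\to\infty$; since $\chi(z)$ is uniformly bounded, the subtracted term dominates and the modulus of the ratio goes to infinity.

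The one genuine subtlety, which I expect to be the main thing to check carefully, is that the sets $\Sz,\nSz_\pm$ themselves depend on $n$ through $\phase$. Combining \eqref{phase expand} with $\lambda/n=\bigo{(\log n)^{-1}}$ from Lemma~\ref{lem:crit pts} and the bounds on $\zeta$ near and outside the critical strip coming from \eqref{zeta bounds} and Lemma~\ref{lem:dlog zeta}, one gets $\phase(z)=2(\log z+1-z)+o(1)$ uniformly on compact subsets bounded away from $\pm 1$ and from the rescaled critical strip. Hence for $z$ in the interior of the limiting region, the strict inequality $|e^{\phase(z)}|\lessgtr 1$ persists for all $n$ sufficiently large, so the exponential decay/growth estimates above are stable under the limit. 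No analysis of the erfc branch of Theorem~\ref{thm:T asymp} is required for the pointwise statement of the corollary; that branch would only be needed if one wanted a bound uniform up to the boundary Szeg\H o-type curve near $\pm 1$.
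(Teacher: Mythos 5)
Your proposal is correct and is essentially the argument the paper intends: the corollary is stated immediately after Theorem~\ref{thm:T asymp} with no separate proof, and the natural route is precisely the one you take---divide the non-ball branch of \eqref{T asymp} by $f(\lambda z)=\xi(\tfrac12+\lambda z)$ and read off the behavior of $\chi(z)-\tfrac{e^{n\phase(z)}}{\sqrt n}h_0(z)(1+\Ecal(z))$ according to the sign of $\Re\phase$. Your extra care about the $n$-dependence of $\Sz,\nSz_\pm$ (via the stabilization $\phase(z)=2(\log z+1-z)+o(1)$ away from $\pm1$ and the strip) and about ensuring $z\notin\ball_\delta$ for large $n$ are exactly the points one must verify, and you handle both correctly.
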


Let us begin to develop the tools to prove Theorem~\ref{thm:T asymp}.
For $z \in B_\delta$, we define the function 
$k: B_\delta \backslash \partial \mathcal{S}_1 \to \C$ by 
\begin{gather}\label{k def 1}
	k(z) = \sqrt{n} \hat k ( \sqrt{n} w(z)),  \quad \hat k(s) =\frac{1}{2\pi i} 
	\int_\g e^{-t^2} \frac{ \dd t}{t - s} 
\end{gather}
where $\g$ is the left-to-right oriented contour passing through the origin formed by extending the scaled image $\sqrt{n} w( \dscal_1 \cap \ball_{1,\delta} )$ horizontally to infinity in both directions. We will show that this function well approximates $h(z)$ in $\ball_\delta$. 
For our purposes, the essential fact is that $k(z)$ is analytic in $\ball_\delta \backslash \dscal_1$ and satisfies the same jump relation on $\dscal_1$ as the function $h(z)$ which we are attempting to approximate:
\begin{equation}\label{HK local jumps}
	h_+(z) - h_-(z) =  \sqrt{n} e^{ -n \phase(z)} 
	= \sqrt{n} e^{-n w(z)^2} = k_+(z) - k_-(z)
		\qquad 
	z \in \dscal_1 \cap \ball_\delta. 
\end{equation}

The integral defining $\hat k$ can be explicitly evaluated:  
integrating by parts one easily shows that $\hat{k}$ satisfies  
$
	\hat{k}' + 2s \hat{k} = i/ \sqrt{\pi};
$
using \eqref{k def 1} and the residue calculus one sees that $\hat k_\pm(0) = \pm 1/2$. Solving the differential equation for $\hat{k}$ yields, upon composition with 
$\sqrt{n} w(z)$:
\begin{equation}\label{k}
	k(z) = \sqrt{n} e^{-n \phase(z)} \lb \chi(z) - \frac{1}{2} \erfc( i \sqrt{n} w(z) ) \rb. 
\end{equation}

Using the known asymptotic behavior \cite[eq.~7.12.1]{DLMF}  of the complementary error function
\begin{equation}\label{erfc asymp}
	e^{s^2} \erfc(s) \sim 
	\begin{dcases}
		\frac{1}{\sqrt{\pi} s} 
		\sum_{m=0}^\infty (-1)^m \frac{  \Gamma(1/2 + m) }{\Gamma(1/2)} s^{-2m} 
		& \left| \arg(s) \right | < 3\pi/4 \\
		2e^{s^2} +  \frac{1}{\sqrt{\pi} s}  
		\sum_{m=0}^\infty (-1)^m \frac{  \Gamma(1/2 + m) }{\Gamma(1/2)} s^{-2m}
		& \left| \arg(-s) \right | < 3\pi/4 ,
	\end{dcases}
\end{equation}
it follows that uniformly in the $s$-plane
\begin{equation}\label{K asymptotics}
	\hat{k}(s)  = -\frac{ 1 }{2 i\sqrt{\pi} s} \lb 1 + \bigo{s^{-2}} \rb 
	\qquad
	s \to \infty.
\end{equation}

Putting together the steepest descent approximation \eqref{h0}, valid in $\ball_\delta^c$, and our local model $k$ we define the residual error function
\begin{equation}\label{R}
	\Ecal(z) = \begin{cases}
		h(z) - h_0(z) & z \in \ball_\delta^c \\
		h(z) - k(z) & z \in \ball_\delta.
	\end{cases}
\end{equation}	
Orienting the contour $\partial \ball_\delta$ counterclockwise we have the following lemma.
\begin{lemma}\label{lem:r}
The residual $\Ecal(z)$, defined by $\eqref{R}$, is analytic in $\C \backslash \Gamma_\Ecal$, 
$\Gamma_\Ecal = (\dscal_1 \backslash \ball_\delta) \cup \partial \ball_\delta$, and given by
\begin{gather}
	\label{r cauchy}
	\Ecal(z) = \frac{1}{2 i \pi} \int_{\Gamma_\Ecal} \frac{ v_\Ecal(w) }{w-z} \, \dd w, \qquad
	v_\Ecal(z) = 
	\begin{cases}
	 	h_0(z) - k(z) & z \in \partial B_\delta \\
	 	\sqrt{n} e^{-n \phase(z)} & z \in \contour \backslash \ball_\delta.
	 \end{cases}
\end{gather}
Moreover, there exist $n_0, \delta_0 > 0$ such that for any $n \geq n_0$ and $\delta \leq \delta_0$ 
\begin{equation}\label{r bounds}
	\Ecal(z) = 
	\begin{dcases}
		\lb h_0(z) + \frac{1}{2i\sqrt{\pi} w(z)} \rb 
		+ \bigo{ n^{-1} } & z \in \ball_\delta \\
		\bigo{ n^{-1} } & z \in \ball_\delta^c
	\end{dcases}
\end{equation}
uniformly for $z$ in each set. 
\end{lemma}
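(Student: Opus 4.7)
The plan is to establish that $\Ecal$ is sectionally analytic off $\Gamma_\Ecal$ with explicitly computable jumps and decay at infinity, deduce the Cauchy representation \eqref{r cauchy} via Plemelj, and then estimate the resulting integral in each of the two regions.

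First I would verify the analyticity. Both $h$ and $h_0$ are analytic off $\dscal_1$, while $k$ is analytic in $\ball_\delta \setminus \dscal_1$ by construction, so $\Ecal$ is analytic in $\C \setminus (\dscal_1 \cup \partial \ball_\delta)$. The crucial observation is that on $\dscal_1 \cap \ball_\delta$ the functions $h$ and $k$ share the same jump by \eqref{HK local jumps}, so $\Ecal = h - k$ extends analytically across this arc. Hence $\Ecal$ is analytic precisely on $\C \setminus \Gamma_\Ecal$. The remaining jumps are: across $\dscal_1 \setminus \ball_\delta$, $\Ecal_+ - \Ecal_- = h_+ - h_- = \sqrt{n}\, e^{-n\phase(z)}$ by Plemelj applied to \eqref{H}; and across $\partial \ball_\delta$ (oriented counterclockwise so the $+$ side is the interior), $\Ecal_+ - \Ecal_- = (h-k) - (h-h_0) = h_0 - k$. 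Combining with the trivial decay $h(z), h_0(z) = \bigo{1/|z|}$ at infinity, the Plemelj representation \eqref{r cauchy} follows.

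The main step is expanding $v_\Ecal$ on $\partial \ball_\delta$. There $|\sqrt{n}\, w(z)| = \sqrt{n}\, \delta \to \infty$, so \eqref{K asymptotics} gives
\[
    k(z) = \sqrt{n}\, \hat{k}\lp \sqrt{n}\, w(z) \rp = -\frac{1}{2i\sqrt{\pi}\, w(z)} \lp 1 + \bigo{n^{-1}} \rp
\]
uniformly, so $v_\Ecal(z) = h_0(z) + \frac{1}{2i\sqrt{\pi}\, w(z)} + \bigo{n^{-1}}$ on $\partial \ball_\delta$. Although both $h_0$ and $1/w$ individually have simple poles at $z = \pm 1$, these cancel exactly: using \eqref{zeta local} with \eqref{phase constant} one has $w(z) = -i(z-1)\lp 1 + \bigo{1/\log n} \rp$ near $z=1$, and a direct residue computation shows the residues of $h_0$ and $\frac{1}{2i\sqrt{\pi}\, w}$ at $z = 1$ are negatives of each other up to $\bigo{1/\log n}$; the $z = -1$ case is analogous via the symmetry $w(-z) = w(z)$. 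Consequently $h_0 + \frac{1}{2i\sqrt{\pi}\, w}$ extends to a function analytic across $\ball_\delta$.

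With this expansion the bounds \eqref{r bounds} follow from standard Cauchy-integral estimates. On $\dscal_1 \setminus \ball_\delta$ one has $\Re \phase(s) \geq c(\delta) > 0$ (the stationary points $\pm 1$ being the only zeros of $\Re \phase$ on $\dscal_1$), so that piece contributes $\bigo{e^{-cn}}$. On $\partial \ball_\delta$, $v_\Ecal = \bigo{1}$ by the cancellation, giving $\bigo{n^{-1}}$ for $z \in \ball_\delta^c$. For $z \in \ball_\delta$, Cauchy's integral formula applied to the analytic function $h_0(w) + \frac{1}{2i\sqrt{\pi}\, w(w)}$ on the counterclockwise circle $\partial \ball_\delta$ reproduces exactly $h_0(z) + \frac{1}{2i\sqrt{\pi}\, w(z)}$, leaving only the $\bigo{n^{-1}}$ error from the remainder in $v_\Ecal$. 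The principal obstacle I anticipate is verifying that the leading singularities at $\pm 1$ cancel cleanly enough --- requiring the explicit constant in \eqref{phase constant} --- to render $h_0 + \frac{1}{2i\sqrt{\pi}\, w}$ analytic across $\ball_\delta$; once this is checked, the remaining estimates are routine.
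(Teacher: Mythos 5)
Your overall route is the same as the paper's: establish sectional analyticity of $\Ecal$ off $\Gamma_\Ecal$ using the jump cancellation \eqref{HK local jumps}, read off the jumps $h_0-k$ on $\partial\ball_\delta$ and $\sqrt{n}e^{-n\phase}$ on $\dscal_1\backslash\ball_\delta$, write the Cauchy representation, and then split $v_\Ecal$ on $\partial\ball_\delta$ into a piece analytic in $\ball_\delta$ (evaluated exactly by the Cauchy integral formula, and contributing nothing for $z\in\ball_\delta^c$) plus an $\bigo{n^{-1}}$ remainder. All of that matches the paper.

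There is, however, one step that as written does not hold up, and it sits at the crux of the $z\in\ball_\delta$ bound. You assert that the residues of $h_0$ and $\tfrac{1}{2i\sqrt{\pi}w}$ at $z=\pm1$ ``are negatives of each other up to $\bigo{1/\log n}$'' and then conclude that $h_0+\tfrac{1}{2i\sqrt{\pi}w}$ extends analytically across $\ball_\delta$. That conclusion does not follow from that premise: if the residues cancelled only to $\bigo{1/\log n}$, the sum would retain a simple pole at $\pm1$ with residue $\bigo{1/\log n}$, it would not be analytic in $\ball_\delta$, and the resulting Cauchy transform would contribute an unbounded term $\bigo{\tfrac{1}{(z\mp1)\log n}}$ to \eqref{r bounds} near the stationary points. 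The resolution is that the cancellation is \emph{exact}, not approximate: from \eqref{h0}, $\res_{z=1}h_0 = -\tfrac{1}{\sqrt{2\pi|\phase''(1)|}}$, while from \eqref{zeta local}, $w'(1)=-i\sqrt{-\phase''(1)/2}$, so $\res_{z=1}\tfrac{1}{2i\sqrt{\pi}w} = \tfrac{1}{2i\sqrt{\pi}w'(1)} = \tfrac{1}{\sqrt{2\pi|\phase''(1)|}}$ --- the same exact constant $\phase''(1)$ appears in both, so no appeal to the approximation \eqref{phase constant} is needed (or wanted; you introduced the $\bigo{1/\log n}$ slack precisely by replacing $\sqrt{-\phase''(1)/2}$ with $1$ in one factor but not the other). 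With the exact cancellation in hand, the rest of your argument goes through as in the paper. A minor further point: the clause ``$v_\Ecal=\bigo{1}$ on $\partial\ball_\delta$, giving $\bigo{n^{-1}}$ for $z\in\ball_\delta^c$'' is a non sequitur on its own --- a merely bounded density gives only an $\bigo{1}$ Cauchy transform; what you need (and evidently intend) is that the $\bigo{1}$ part is the boundary value of a function analytic in $\ball_\delta$, so its Cauchy transform vanishes identically for $z$ outside, leaving only the $\bigo{n^{-1}}$ remainder.
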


\begin{proof}
From \eqref{H} and \eqref{h0} we see that $\Ecal(z)$ is analytic in $\ball_\delta^c$ except along $\dscal_1$ where it inherits the jump discontinuity of $h$ and that it vanishes as $z\to \infty$. 
Inside $\ball_\delta$, \eqref{HK local jumps} implies that $\Ecal$ is continuous across $\partial \mathcal{S}_1$, and hence is analytic. 
The jump $v_\Ecal$ and Cauchy integral representation of $\Ecal(z)$ in \eqref{r cauchy} follow immediately. 

The bounds in \eqref{r bounds} follow from two observations:  
first, the jump  $v_\Ecal$ is exponentially small on $\dscal_1 \backslash \ball_\delta$, 
specifically $v_\Ecal(z) = \bigo{ e^{-cn} }$ for $z \in \dscal_1 \backslash \ball_\delta$ where $c = \min\limits_{y \in [\delta,\infty)} \Re \,\phase(1+ i y) > 0$; 
and secondly, on the disk boundary $\partial \ball_\delta$ we have 
\[
	v_\Ecal(z)  
	= \lb  h_0(z) + \frac{1}{2i \sqrt{\pi} w(z)}\rb 
	-  \lb k(z) + \frac{1}{2i \sqrt{\pi} w(z)} \rb.
\] 
Using \eqref{h0} and \eqref{zeta local} it's easy to see that the first bracketed term has vanishing residues at $z = \pm 1$; it therefore extends to a bounded analytic function for $z \in \ball_\delta$.
The second bracketed term is not analytic in $\ball_\delta$, but using \eqref{k def 1} and \eqref{K asymptotics} it admits a Laurent expansion on $\partial \ball_\delta$ which is uniformly $\bigo{n^{-1}}$. 
Thus, the Cauchy transform of the first bracketed term can be explicitly evaluated 
for any $z \in \C \backslash \partial \ball_\delta$ by the Cauchy integral formula; using the boundedness of the Cauchy projection operators the Cauchy transform of the second bracketed term above is everywhere $\bigo{n^{-1}}$. The expansion \eqref{r bounds} follows immediately.
\end{proof}

\begin{proof}[Proof of Theorem~\ref{thm:T asymp}]
Equation \eqref{R} and Lemma~\ref{lem:r} yield an asymptotic expansion of $h(z)$ in $\ball_\delta$ and $\ball_\delta^c$. Plugging this result into \eqref{represent} gives \eqref{T asymp} which completes the proof.  
\end{proof}

\section{Counting the zeros of the taylor polynomials}
\label{sec:taylor zeros}
\begin{figure}[htb]
	\centering
	\includegraphics[width=.9\textwidth]{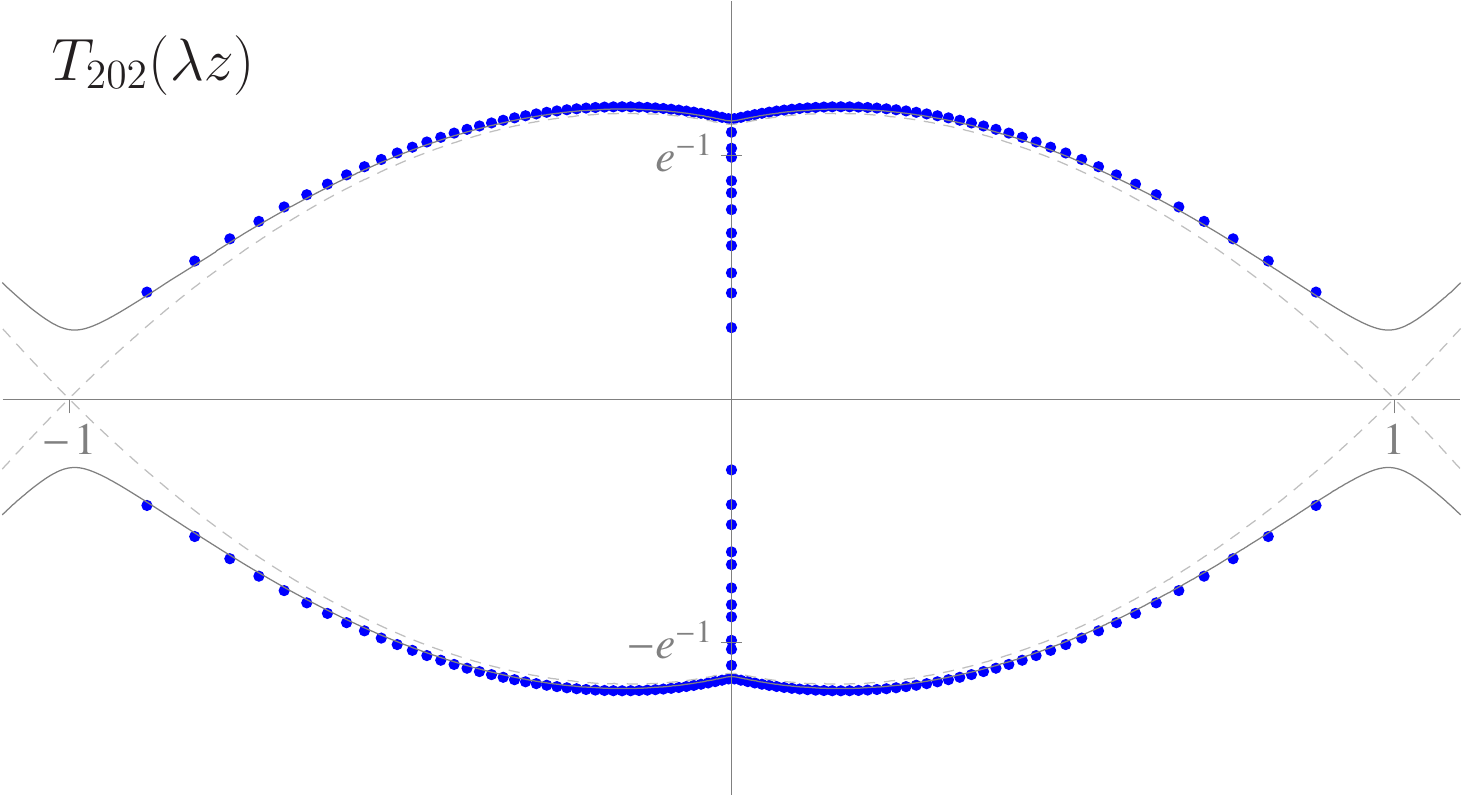}
	\caption{
	Zeros of $T_{202}(\lambda z)$, the $202^{\mathrm{nd}}$ degree Taylor polynomial 
	of $\xi(z+1/2)$ in the rescaled plane, computed using \cite{Mathemagica}. 
	The scaling parameter $\lambda$ is given by Lemma~\ref{lem:crit pts}. 
	As $n \to \infty$, spurious zeros approach the level curve $D_n^{(0)}$ (dashed line);
	for finite $n$, the improved curve $D_n^{(1)}$ (solid line) more accurately 
	approximates zeros.  
	A particular fraction lie inside the curve $D_n^{(0)}$. These Hurwitz zeros converge to 
	shifted and scaled nontrival roots of the $\zeta$ function. 
	The difference between the 11 numerically computed zeros of $T_{202}(\lambda z)$ on the positive critical line and the (rescaled) first 11 nontrivial zeros of $\zeta$ are given in Table~\ref{table:zeros}. 
	\label{fig:zeta}
	}
\end{figure}
Here and in what follows, $\lambda = \lambda(n)$ is as described by Lemma~\ref{lem:crit pts}, so that in particular $\lambda$ satisfies $4n - \lambda \log \lp \frac{\lambda}{2\pi} \rp = \bigo{1}$.
It follows from \eqref{represent} that any zero $z_{k,n}$ of $T_{2n-2}(\lambda z)$ satisfies
\begin{equation}\label{zero condition 1}
	\mathcal{G}(z_{k,n}) = \frac{\log n}{2n} + \frac{2k\pi i}{n} , 
	\qquad
	\mathcal{G}(z):= \phase(z) + \frac{1}{n} \log h(z) , 
\end{equation}
This can happen in one of two ways, either: 
a) $z_{k,n}$ is a Hurwitz zero converging to a zero of $f(\lambda z)$---and thus lies inside the rescaled critical strip; 
or b) $z_{k,n}$ is a spurious zero, and does not approach a root of $f(\lambda z)$;  in both cases $z_{k,n}$ must approach the level curve
\begin{equation}
	D_n^{(0)} = \{ z \in C \,:\, \Re \phase(z) = 0, \text{ and } |z|<1 \},
\end{equation}
which is nearly\footnote{we do not call $D_n^{(0)}$ the actual Szeg\H{o} curve because $\phase$ still has weak $n$ dependence. Strictly speaking, the Szeg\H{o} curve should be defined as $D_\infty = \lim_{n \to \infty} D_n^{(0)}$.}
the Szeg\H{o} curve for $f$.  Although not necessary for this paper, this set can be shown to consist of a collection of disjoint components collapsing upon Hurwitz zeros (and the corresponding zeros of the function $f(\lambda z)$) together with an additional large component attracting those zeros that are spurious.

Let
\begin{equation}
	\mathcal{Z}_n = \left\{ z_{k,n} \,:\, T_{2n-2}(\lambda z_{k,n}) = 0 
	\text{ and } \Re z_{k,n} > \frac{1}{2\lambda} 
	\right\}
\end{equation}
denote the set of (spurious) zeros of $T_{2n-2}(\lambda z)$ outside the rescaled critical strip. The results in this section culminate in the following theorem:

\begin{thm}\label{thm:zeros-outside}
Let $T_{2n-2}(\lambda z)$ be the rescaled Taylor polynomial of degree $2n-2$ defined by 
\eqref{represent} and Lemma~\ref{lem:crit pts}. Then as $n \to \infty$
\[
	| \mathcal{Z}_n | =
	n - \frac{\lambda \y}{2\pi} \log \lp \frac{\lambda \y}{2\pi} \rp
	+ \frac{\lambda \y}{2\pi} - \frac{1}{4\pi \y} \log \lp \frac{\lambda \y}{2\pi} \rp
	+ \bigo{\log\!\log \lambda \y}.
\]
Here $\y$, defined in Lemma~\ref{lem:outer zeros} below, is the imaginary part of a point on $D_n^{(1)}$---a further improvement to the curve $D_n^{(0)}$---at the edge of the critical strip. 
\end{thm}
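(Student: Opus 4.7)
The plan is to count $|\mathcal{Z}_n|$ by applying the argument principle to $T_{2n-2}(\lambda z)$ along the boundary of the D-shaped region
\[
\Omega = \{z \in \C : \Re z > 1/(2\lambda)\} \cap \{|z| < R\},
\]
with $R$ chosen large enough that every element of $\mathcal{Z}_n$ lies in $\Omega$; Theorem~\ref{thm:T asymp} together with the boundedness of $D_n^{(1)}$ makes such an $R$ available. Orient $\partial\Omega$ counterclockwise and decompose it into the outer arc $C_R$ on $|z|=R$ and the vertical segment $C_v = \{1/(2\lambda) + it : t \in [-Y_0, Y_0]\}$, $Y_0 = \sqrt{R^2 - 1/(4\lambda^2)}$, traversed downward, so that $|\mathcal{Z}_n| = (2\pi)^{-1}[\arg T_{2n-2}(\lambda z)]_{\partial\Omega}$.

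On $C_R$, the polynomial is dominated by its leading monomial proportional to $z^{2n-2}$, and $\arg z$ increases by $\pi$ along the arc, so $\arg T_{2n-2}(\lambda z)$ winds by $(2n-2)\pi + o(1)$, contributing $n + \bigo{1}$ to the count (absorbing the $-1$ into the theorem's error). On $C_v$, Theorem~\ref{thm:T asymp} with $\chi(z) \equiv 1$ (since $1/(2\lambda) < 1$), combined with the identity $e^{n\phase(z)} = z^{2n}f(\lambda)/f(\lambda z)$, gives
\[
T_{2n-2}(\lambda z) = f(\lambda z) - \frac{z^{2n}f(\lambda)}{\sqrt{n}} h_0(z)(1+\Ecal(z)).
\]
The two terms balance precisely at $z = 1/(2\lambda) \pm i\y$---this is the defining condition of $D_n^{(1)}$ at the strip edge---partitioning $C_v$ into an inner segment $|t| < \y$, on which $T_{2n-2}(\lambda z) \approx \xi(1+i\lambda t)$, and two symmetric outer segments $\y < |t| < Y_0$, on which $T_{2n-2}(\lambda z) \approx -z^{2n}f(\lambda)h_0(z)/\sqrt{n}$.

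On the inner segment, the Schwarz reflection $T_{2n-2}(\overline{\lambda z})^* = T_{2n-2}(\lambda z)$ gives the continuous argument change $-2\arg\xi(1+i\lambda\y)$. Applying Stirling's series to $\arg\Gamma(1/2 + i\lambda\y/2)$, accounting for the $\pi^{-s/2}$ and $s(s-1)$ factors of $\xi$, and invoking Lemma~\ref{lem:arg bound} to bound $\arg\zeta(1+i\lambda\y)$, this evaluates to $-\lambda\y\log(\lambda\y/(2\pi)) + \lambda\y + \bigo{\log\log\lambda\y}$; dividing by $2\pi$ yields the middle two terms of the theorem. On the outer segments the dominant change is $2n\arg z = 2n\arctan(2\lambda t)$; Taylor-expanding $\arctan(2\lambda t) = \pi/2 - 1/(2\lambda t) + \bigo{(\lambda t)^{-3}}$, using the symmetry of the top and bottom outer pieces, and substituting Lemma~\ref{lem:crit pts}'s relation $\lambda\log(\lambda/(2\pi)) = 4n + \bigo{1}$ produces the contribution $-\frac{1}{4\pi\y}\log(\lambda\y/(2\pi)) + \bigo{1}$ as $R \to \infty$. (The passage from $\log(\lambda/(2\pi))$ to $\log(\lambda\y/(2\pi))$ introduces an $\bigo{\log\y/\y}$ discrepancy that is absorbed into the stated error.)

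The main obstacle is the careful handling of the transition regions near $z = 1/(2\lambda) \pm i\y$: there the two competing terms in the above representation are comparable in magnitude, so $\arg T_{2n-2}(\lambda z)$ can wind rapidly, and the full strength of Theorem~\ref{thm:T asymp}, including the residual $\Ecal$, must be used to verify that these transitional contributions remain within the stated $\bigo{\log\log\lambda\y}$. Secondary technicalities are: perturbing $C_v$ slightly if necessary to ensure no zero of $T_{2n-2}(\lambda z)$ lies on $\partial\Omega$; bookkeeping the subleading contributions from $\arg h_0$, $\arg(1+\Ecal)$, and the corner connectors at $\pm iY_0$ as $R \to \infty$; and using the classical fact $\zeta(1+i\tau) \neq 0$ to ensure $\arg f(\lambda z)$ is well defined along $C_v$.
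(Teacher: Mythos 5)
Your strategy---an argument-principle count along the boundary of $\{\Re z>\tfrac{1}{2\lambda}\}\cap\{|z|<R\}$ followed by $R\to\infty$---is a genuinely different route from the paper's. The paper instead constructs explicit approximate zeros $\alpha_{k,n}$ on $D_n^{(1)}$ by solving $\mathcal{G}_1(\alpha_{k,n})=\tfrac{\log n}{2n}+\tfrac{2k\pi i}{n}$, counts them via the increment of $\Im\mathcal{G}_1$ along the level curve (Lemma~\ref{lem:outer zeros}), matches each to a true zero by the implicit function theorem (Lemma~\ref{lem:true zeros outside}), and separately counts the zeros in $\ball_{\pm1,\delta}$ by a Rouch\'e comparison with the zeros of $\erfc$ (Lemma~\ref{lem:local zeros}). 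Your route has real advantages: the stationary points $z=\pm1$ lie in the interior of your contour, so the $\erfc$ zero count and the cancellation of the $\delta$-dependent terms never arise, and the argument principle automatically captures every zero, replacing the paper's monotonicity argument that no roots are missed. Your leading-order evaluations also reproduce the paper's numbers from the other side: the inner segment yields $-\tfrac{1}{\pi}\arg\xi(1+i\lambda\y)$, i.e.\ the two middle terms via Stirling and Lemma~\ref{lem:arg bound}, and the outer segments yield $-\tfrac{n}{\pi\lambda\y}=-\tfrac{1}{4\pi\y}\log\tfrac{\lambda}{2\pi}+\bigo{1}$ by \eqref{lambda-func-asymp}, which is precisely the $-\tfrac{1}{4n\y}\log(\cdot)$ correction appearing in the paper's computation of $\Im\phase\lp\tfrac{1}{2\lambda}+i\y\rp$.

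Two steps, however, are asserted rather than proved, and both must be closed before the count is established to within $\bigo{\log\!\log\lambda\y}$. First, on the inner and outer segments you replace $T_{2n-2}(\lambda z)$ by its dominant factor, but the discarded factor is $1-u(z)$ (resp.\ $1-u(z)^{-1}$) with $u=\tfrac{e^{n\phase}}{\sqrt n}h_0(1+\Ecal)$ satisfying only $|u|<1$ (resp.\ $|u|>1$), and near $t=\pm\y$ the modulus of $u$ is close to $1$; an ``$\approx$'' does not control a winding number. The needed observation is that $1-v$ with $|v|<1$ lies in the open right half-plane, so its continuous argument is confined to $(-\pi/2,\pi/2)$ and contributes at most $\bigo{1}$ per segment; you must also verify $|u|<1$ for all $|t|<\y$ and $|u|>1$ for all $|t|>\y$, which follows from the monotonicity in $y$ of $\Re\mathcal{G}_1$ along the strip edge established inside the proof of Lemma~\ref{lem:z crit}. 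Second, the transition neighborhoods of $\tfrac{1}{2\lambda}\pm i\y$ are explicitly flagged as ``the main obstacle'' but left unresolved; there the half-plane trap fails and $1-u$ could a priori wind many times, destroying the error bound. The quantitative fix is that Lemma~\ref{lem:phase'} gives $|\partial_z\phase|\asymp1$ with $\Im\partial_z\phase$ bounded away from zero at the strip edge, so the set where $|u|\in[\tfrac12,2]$ has length $\bigo{n^{-1}}$ in $t$, over which $\arg u$ varies by $n\,\bigo{n^{-1}}=\bigo{1}$; hence the transitional winding is $\bigo{1}$. With these two points supplied (plus the routine verification that Theorem~\ref{thm:T asymp} is uniform on the full line $\Re z=\tfrac{1}{2\lambda}$ and that $\arg h_0$ contributes $\bigo{1}$ because $\Re(1-z^2)>0$ there), your argument is a valid and arguably cleaner alternative proof.
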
 

As $T_{2n-2}$ has exactly ${2n-2}$ zeros this has the immediate and obvious corollary:
\begin{cor}\label{cor:taylor zeros inside}
As $n \to \infty$, the Taylor polynomial $T_{2n-2}(\lambda z)$ has
\[
	\frac{\lambda \y}{2\pi} \log \lp \frac{\lambda \y}{2\pi} \rp
	- \frac{\lambda \y}{2\pi} + \frac{1}{4\pi \y} \log \lp \frac{\lambda \y}{2\pi} \rp
	+ \bigo{\log\!\log \lambda \y}
\]
zeros in the rescaled critical strip.
\end{cor}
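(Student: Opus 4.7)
The corollary is a direct counting consequence of Theorem~\ref{thm:zeros-outside}, so my plan is essentially a subtraction. First I would note that since $f(z)=\xi(1/2+z)$ is entire and its even-indexed Taylor coefficients at $z=0$ are strictly positive (a classical consequence of the Hadamard factorization of $\xi$), $T_{2n-2}(\lambda z)$ has exact degree $2n-2$ and therefore $2n-2$ zeros counted with multiplicity.

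Next, the evenness $f(-z)=f(z)$ (inherited from $\xi(z)=\xi(1-z)$) makes $T_{2n-2}(\lambda z)$ an even polynomial in $z$, so its zero set is invariant under $z \mapsto -z$ with identical multiplicities on each side. The zeros in the half-plane $\Re z < -1/(2\lambda)$ are thus in multiplicity-preserving bijection with those in $\mathcal{Z}_n$, and the total count of zeros outside the rescaled critical strip $|\Re z| < 1/(2\lambda)$ is exactly $2|\mathcal{Z}_n|$.

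Subtracting, the number of zeros inside the strip equals $(2n-2)-2|\mathcal{Z}_n|$. Substituting the asymptotic expansion for $|\mathcal{Z}_n|$ supplied by Theorem~\ref{thm:zeros-outside} and absorbing bounded constants into the $\bigo{\log\!\log\lambda\y}$ remainder then produces the claimed formula.

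The only small technical point is to verify that no zero of $T_{2n-2}(\lambda z)$ sits exactly on the boundary lines $\Re z = \pm 1/(2\lambda)$ of the strip, so that the inside/outside split is clean. This is already essentially built into the proof of Theorem~\ref{thm:zeros-outside}: the zeros clustering near the strip boundary are either Hurwitz zeros that converge to genuine $\xi$ zeros (strictly inside the strip by \eqref{zero-free}) or spurious zeros lying near the curve $D_n^{(0)}$, which is bounded away from the edges of the strip for large $n$; any finitely many boundary exceptions would be dominated by the error term. With those ingredients in hand, the corollary follows immediately, and there is no substantive obstacle to overcome.
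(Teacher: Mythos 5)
Your overall strategy --- subtract the count of Theorem~\ref{thm:zeros-outside} from the total number of zeros, using the evenness of $T_{2n-2}(\lambda z)$ to handle the opposite half-plane --- is exactly the paper's intended route (the paper offers only the one-line observation that $T_{2n-2}$ has $2n-2$ zeros). However, your final step does not close. With the paper's definition $\mathcal{Z}_n=\{z_{k,n}: \Re z_{k,n}>1/(2\lambda)\}$, your identity ``inside $=(2n-2)-2|\mathcal{Z}_n|$'' is correct, but substituting the expansion of $|\mathcal{Z}_n|$ gives
\[
	\frac{\lambda \y}{\pi} \log \lp \frac{\lambda \y}{2\pi} \rp
	- \frac{\lambda \y}{\pi} + \frac{1}{2\pi \y} \log \lp \frac{\lambda \y}{2\pi} \rp
	+ \bigo{\log\!\log \lambda \y},
\]
which is \emph{twice} the displayed formula; the discrepancy sits in the leading term and cannot be absorbed into the $\bigo{\log\!\log\lambda\y}$ remainder, so your assertion that the substitution ``produces the claimed formula'' is false as written.

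The resolution is a counting convention you have glossed over: the corollary's formula is $n-|\mathcal{Z}_n|$, i.e.\ the number of zeros in the \emph{upper half} of the rescaled strip, in the spirit of the Riemann--von Mangoldt count $N(T)$, which only counts $0<\Im z\le T$. This reading is forced by the Remark immediately following the corollary, which matches the formula against \eqref{Riemann-Mangoldt} with $T=\lambda\y$ (not $2N(\lambda\y)$). Concretely, by the conjugation symmetry the $2n-2$ zeros split as $n-1$ in each open half-plane (there are no real zeros outside the strip for large $n$, since the level curve in $\U$ avoids the real axis and the zeros of $\erfc$ are non-real), and by the quartet symmetry the right-half-plane count $|\mathcal{Z}_n|$ equals the number of zeros with $\Im z>0$ and $|\Re z|>1/(2\lambda)$; the one-sided inside count is therefore $(n-1)-|\mathcal{Z}_n|$, which does reproduce the display. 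So either the corollary is read as a one-sided (Riemann--von Mangoldt type) count, or the formula must carry an extra factor of $2$; your proposal needed to detect and resolve this factor rather than assert that the arithmetic works out.
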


\begin{rem}  
Well known estimates on the behavior of $\zeta$ within the critical strip show that the level set $\Re \mathcal{G} = \frac{\log n}{2 n}$, on which all zeros of $T_{2n-2}(\lambda z)$ must live, remains within a rectangle whose height is bounded by ${\y} + \bigo{ \frac{\log{n}}{n}} $. Corollary~\ref{cor:taylor zeros inside} is therefore consistent with the Riemann-von Mangoldt formula \eqref{Riemann-Mangoldt} using $T= \lambda {\y}$.  The precision of the error bound for the zeros of the Taylor polynomials suggests that there are a growing number of spurious zeros within the rescaled critical strip.
\end{rem}

Theorem~\ref{thm:zeros-outside} is proved below using the asymptotic representation in Theorem~\ref{thm:T asymp}. 
We first count those zeros bounded away from the stationary points $z=\pm 1$ by constructing a set of approximate zeros $\alpha_{k,n}$ and then demonstrating that each of these is in one to one correspondence with an actual zero $z_{k,n}$ of the Taylor polynomial in the zero free region $\zerofree$. 
We then count the zeros near each of the stationary points using a Rouche theorem type argument. 
Finally, note that the four-fold symmetry $T_{2n}(z) = T_{2n}(-z) = T_{2n}(z^*)^*=T_{2n}(-z^*)^*$ implies that it is sufficient to study only those zeros in the closed positive quadrant: $\Re z, \Im z \geq 0$.

\subsection{Number of zeros outside the critical strip, away from the stationary points}
\label{sec:bulk zeros}

Let 
\begin{equation}\label{U-def}
	\U =  \left\{ 
	z = \overline{\C^+} \,:\, \frac{1}{2\lambda} \leq \Re z \leq 1  
	\text{ and } z \nin \ball_{1, \delta} 
	\right\}.
\end{equation}
denote the vertical strip in $\C^+$ between the critical strip and the stationary point at $z=1$ with a small neighborhood of $z=1$ deleted. Both $f(\lambda z)$ and $h(z)$ are analytic and zero free in $\U$, so $\phase(z)$ and $\log h(z)$ are each well defined (we choose the branches real valued for $z \in \U \cap \R$).

As a first step toward Theorem~\ref{thm:zeros-outside} we want to estimate the number of zeros of $T_{2n-2}(\lambda z)$ in $\U$. We could approximate the zeros $z_{k,n}$ by points along $D_\infty$, but for our purposes it will be more convenient to work with 
\begin{equation}
	\begin{gathered}
	D_n^{(1)} = 
	\{ z \in C \,:\, \Re \mathcal{G}_1(z) = \tfrac{\log n}{2n}, \text{ and } |z|<1 \},\\
	\mathcal{G}_1(z):= \phase(z) + \frac{1}{n} \log h_0(z)
	\end{gathered}
\end{equation} 
which is the (first) correction to the level curve $D_n^{(0)}$ that better attracts the spurious zeros, analogous to the improved Szeg\H{o} curve \eqref{exp Szego1}, which comes from keeping the first term in the asymptotic series for $h$.   
We define the \textit{approximate zeros}, $\alpha_{k,n}$, as roots of the equation.
\begin{gather}\label{zero condition 2}
	\mathcal{G}_1(\alpha_{k,n}) = \frac{\log n}{2n} + \frac{2k\pi i}{n} , 
\shortintertext{and denote by $\Acal_n$ the set of approximate zeros of $T_{2n-2}(\lambda z)$ which lie in $\U$:}
	\Acal_n = \left\{ \alpha_{k,n} \in \U  \,:\, 
	\mathcal{G}_1(\alpha_{k,n}) = \frac{\log n}{2n} + \frac{2k \pi i}{n} 
	\right\}.
	\label{approx zeros}
\end{gather}

We begin by describing the shape of the improved level curve $\mathcal{D}_n^{(1)}$ along which our approximate zeros accumulate in the region
\[
	\zerofreeone = 
	\left\{ z \in \zerofree \backslash \ball_{1,\delta}
	\,:\, 
	 \Re z \in [0,1] 
	\right\}
\]
		
\begin{lemma}\label{lem:z crit} 
Let $z = x+ i y$. Fix $A>0$ defining the zero free region $\zerofree$.
Then there exist $n_0>0$ such that for any $n>n_0$ the level curve $\Re \mathcal{G}_1 = \log n/(2n)$ implicitly defines a single smooth non-intersecting curve $y=Y(x)$ for $z \in \zerofreeone$ as defined above. Near the edge of the critical strip, that is for,
\begin{gather*}
	-\frac{A}{\lambda (\log \lambda)^{2/3} (\log\log \lambda)^{1/3}}  
	< x - \frac{1}{2\lambda} < \frac{A}{\lambda},
	\shortintertext{the curve $y=Y(x)$ satisfies}
	Y(x) = \frac{8n}{\pi \lambda} \mathop{W} \lp \frac{ \pi \lambda}{8n} 
	e^{-1 + x + (\lambda+ \log n)/(4n) } \rp + \bigo{ \frac{ \log\!\log n}{n} },
\end{gather*}
where $\mathop{W}$ is the Lambert-W function. 
\end{lemma}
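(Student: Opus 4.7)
The plan is to establish the curve $y = Y(x)$ in two stages: (a) apply the implicit function theorem to $\Re \mathcal{G}_1(x+iy) = \log n/(2n)$ to produce a smooth global graph throughout $\zerofreeone$, and (b) near the edge of the critical strip refine the implicit description by substituting the explicit expansion~\eqref{phase expand} of $\phase$ and inverting the resulting transcendental equation via the Lambert-$\W$ function.

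For (a), I would compute, via the Cauchy--Riemann equations,
\[
  \partial_y \Re \mathcal{G}_1(z) = -\Im \mathcal{G}_1'(z) = -\Im\bigl[\phase'(z) + \tfrac{1}{n}\partial_z \log h_0(z)\bigr].
\]
Lemma~\ref{lem:phase'} yields $\phase'(z) = 2(z^{-1}-1) + O((\log\!\log n/\log n)^{1/3})$ uniformly on bounded subsets of $\zerofreeone$, and $\partial_z \log h_0(z)$ is analytic and bounded away from $z = \pm 1$. Because membership in $\zerofreeone$ forces $\lambda y$ to be large enough to invoke~\eqref{zeta bounds}, the leading imaginary part $-2y/|z|^2$ is bounded below, and hence $\partial_y \Re \mathcal{G}_1 \ne 0$. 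A local smooth branch $y=Y(x)$ then globalizes to a single non-self-intersecting graph on $\zerofreeone$ because $\Re \mathcal{G}_1$ has no critical points there and the region is connected.

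For (b), I would substitute~\eqref{phase expand} and take real parts, for $z=x+iy$ with $x - 1/(2\lambda) = O(1/\lambda)$ (so $x$ itself is small) and $y=O(1)$ (verified a posteriori from the final formula). Using $|z| = y + O(x^2/y)$, $\arg z = \pi/2 - x/y + O(x^3/y^3)$, $\Re(\log z) = \log|z|$, and $\Re(z\log z) = x\log|z| - y\arg z$, a direct computation gives
\[
  \Re \mathcal{G}_1(z) = 2\log y + 2 - 2x - \tfrac{\lambda}{2n} + \tfrac{\pi \lambda y}{4n} + E(z;n),
\]
where the error $E$ collects $-\tfrac{1}{n}\log|\zeta(\lambda z + 1/2)|$, $\tfrac{1}{n}\Re \tilde r$, $\tfrac{1}{n}\log|h_0|$, and the dropped subleading pieces. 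The Korobov--Vinogradov bounds~\eqref{zeta bounds} together with Lemma~\ref{lem:arg bound} give $\log|\zeta(\lambda z + 1/2)| = O(\log\!\log \lambda y)$ on the edge of the strip, while~\eqref{phase remainder} controls $\tilde r$, so $E = O(\log\!\log \lambda y/n)$. Setting $\Re \mathcal{G}_1 = \log n/(2n)$ and dividing by $2$ yields
\[
  \log y + \tfrac{\pi\lambda y}{8n} = -1 + x + \tfrac{\lambda + \log n}{4n} + O\bigl(\tfrac{\log\!\log \lambda y}{n}\bigr).
\]
Substituting $u = \pi\lambda y/(8n)$ converts this into $\log u + u = \log(\pi\lambda/(8n)) - 1 + x + (\lambda+\log n)/(4n) + O(\log\!\log\lambda y/n)$; exponentiating identifies $u e^u$ with the argument of $\W$ in the statement up to a multiplicative perturbation $1 + O(\log\!\log\lambda y/n)$.

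The main obstacle is arranging these estimates so that the final error is $O(\log\!\log n/n)$ rather than being inflated by factors of $\log n$. The saving grace is that the argument of $\W$ has size $O(\lambda/n) = O(1/\log n)$, so the absolute perturbation of $u e^u$ (and hence of $u$, since $\W$ has $O(1)$ derivative at small positive arguments) is $O(\lambda/n)\cdot O(\log\!\log n/n) = O(\log\!\log n/(n\log n))$; multiplying by $8n/(\pi\lambda) = O(\log n)$ to recover $y$ then gives exactly the stated $O(\log\!\log n/n)$. Beyond this Lambert-$\W$ bookkeeping, the other delicate ingredient is the combined use of the $|\zeta|$ estimate in~\eqref{zeta bounds} and the $\arg\zeta$ bound of Lemma~\ref{lem:arg bound} to control $\log\zeta(\lambda z + 1/2)$ on the edge of the critical strip.
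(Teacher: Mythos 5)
Your two-stage plan mirrors the paper's proof: (i) invoke non-vanishing of the derivative of $\Re \mathcal{G}_1$ to produce a smooth curve, (ii) expand $\Re \mathcal{G}_1$ for $x$ near $\tfrac{1}{2\lambda}$ and solve for $y$ via the Lambert-$\W$ function. Stage (ii) is correct and in fact more careful than the paper's: the paper derives the leading model $g(x,y) = 2(\log y + 1 - x) - \tfrac{\lambda}{2n}(1 - \tfrac{\pi}{2}y)$ and then declares ``the error bound is immediate'', whereas you actually verify that a multiplicative $1+\bigo{\log\!\log n/n}$ perturbation of the (small, $\bigo{\lambda/n}$) argument of $\W$ scales back to an additive $\bigo{\log\!\log n/n}$ error after multiplying by $8n/(\pi\lambda)$. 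That bookkeeping is the right way to justify the stated remainder.

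Stage (i), however, has a genuine gap. You claim $\partial_y \Re\mathcal{G}_1 \neq 0$ throughout $\zerofreeone$ on the grounds that $\zerofreeone$ forces $\lambda y$ to be large, hence the leading term $2y/|z|^2$ is ``bounded below.'' But $\lambda y$ large only gives $y \gtrsim 1/\lambda \sim (\log n)/n$, so $2y/|z|^2$ can be as small as $\bigo{(\log n)/n}$, which is far smaller than the $\bigo{(\log\!\log n/\log n)^{1/3}}$ error in \eqref{phase'-0}; the sign of $\partial_y\Re\mathcal{G}_1$ is therefore not controlled near the real axis, and the monotonicity-in-$y$ argument for a global graph does not go through on all of $\zerofreeone$. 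Relatedly, ``$\Re\mathcal{G}_1$ has no critical points and the region is connected'' rules out singular points of the level set but does not by itself exclude several disjoint arcs. The paper closes both gaps by first confining the level set to a bounded horizontal band: $\Re\phase(z) < 0$ on $(0,1)$ (since $\Re\phase \to -\infty$ at $0$, $\phase(1)=0$, and $\partial_z\phase$ is non-vanishing on $(0,1)$) forces $\Re\mathcal{G}_1 < \tfrac{\log n}{2n}$ near the real axis, while $\Re\mathcal{G}_1 \to +\infty$ as $y \to \infty$ bounds the curve above; together with $\Re\mathcal{G}_1 > 0$ near $\Re z = 1$ for $y$ bounded below, every branch must run from the left edge into $\ball_{1,\delta}$, and uniqueness of $y$ for each $x$ near the left edge then kills any extra branch. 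Your argument needs this confinement step (or an equivalent continuation argument starting from $\partial\ball_{1,\delta}$) before the monotonicity in $y$ can be used.
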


\begin{proof}
Both $\phase$ and $h_0$ are analytic in $\zerofreeone$. Lemma~\ref{lem:phase'} bounds $|\partial_z\phase|$ below uniformly in $n$, and $h_0$ has a bounded derivative in $\zerofreeone$. 
It follows that for all sufficiently large $n$, $\partial_z \mathcal{G}_1(z) \neq 0$ for all $z \in \zerofreeone$ and thus the level set $\Re \mathcal{G}_1 = \log n/(2n)$ must consist of a collection of smooth nonintersecting arcs in $\zerofreeone$ with no finite endpoint in $\zerofreeone$. 

As $\lim_{z \to 0} \Re \phase(z) = -\infty$, $\phase(1) = 0$, and $\partial_z \phase$ has no zeros on $(0,1)$, $\Re \phase < 0$ for $z \in (0,1)$. Thus, for any $x_0 \in (0,1)$, for all sufficiently large $n$, $\Re \mathcal{G}_1 < 0$ for all $x \in (0, x_0)$. So no branch of the level curve may leave $\zerofreeone$ through the real axis.
Away from $z = 0$ we use \eqref{phase expand} to write
\begin{multline}\label{re phase}
	\Re \mathcal{G}_1(z) = 2(\log |z| + 1 - x) 
	- \frac{\lambda}{2n}(1-x +x \log|z| - y \arg(z)) \\
	-\frac{1}{n} \log \left| \zeta(\lambda z + 1/2) \right| 
	+ \frac{1}{n}\Re \tilde{r}(z; \lambda) + \frac{1}{n} \log | h_0(z) | .
\end{multline}
From this expansion we observe that: (1) the level curves are bounded above since $\Re \mathcal{G}_1(z)$ grows without bound as $y \to \infty$ with $x$ bounded; (2) for any $y_0>0$, if $z = 1 + i y$, with $y > y_0$, then for all $n$ large enough $\Re \mathcal{G}_1(z) > c(y_0) > 0$. 
So all branches of the level curve $\Re \mathcal{G}_1 = \tfrac{\log n}{2n}$ in $\zerofreeone$ must enter $\zerofreeone$ through it's left edge and leave by entering $\ball_{1,\delta}$.

Since all branches of the level set are bounded away from the origin and infinity, for any $ z = x +i y$ along the level set $\Re \phase(z) = \log n/(2n)$ with $\Re z = x < A/\lambda$ we have: 
\begin{equation}\label{small x expansions}
	\log |z| = \log y + \bigo{\lambda^{-2}},
	\qquad \qquad 
	\arg (z) = \frac{\pi}{2} - \frac{x}{y} + \bigo{\lambda^{-3}}.
\end{equation}
Inserting these into \eqref{re phase} one has
\begin{gather*}
	\Re \phase(z) = g(x,y) + \bigo{\frac{ \log\!\log n}{n} },	
	\qquad
	g(x,y) = 2(\log y +1-x) - \frac{\lambda}{2n} (1 - \frac{\pi}{2} y),
\end{gather*}
where we've used \eqref{zeta bounds} to bound $\log \zeta(\lambda z+1/2)$.
For each $0 < x < A/\lambda$ there is a single solution $y$ of $g(x,y) = (\log n)/(2n)$. It follows that there is only a single branch of the level curve $\Re \mathcal{G}_1 = \tfrac{\log n }{2n}$ in $\zerofreeone$. One may then solve $g(x,y) = (\log n)/(2n)$ for $y$ using the Lambert-$W$ function, which gives the leading term of $y=Y(x)$ for $x \in \zerofreeone$ with $\Re x < A/\lambda$. The error bound is immediate. 
\end{proof}

\begin{lemma}\label{lem:outer zeros}
As $n \to \infty$, the number of approximate zeros in $\U$ satisfies 
	\[
		\left| \Acal_n \right| = \frac{n}{2} 
		- \frac{\lambda \y}{4\pi} \log \lp \frac{ \lambda \y}{2\pi} \rp  
		+ \frac{\lambda \y}{4\pi } 
	 	- \frac{1}{8\pi \y} \log \lp \frac{ \lambda \y}{2\pi} \rp  
		- \frac{n \delta^2}{2\pi} +\bigo{\log\!\log \lambda \y}.
	\]
Here, $\y = Y \lp \tfrac{1}{2\lambda} \rp$, with $Y(x)$ as described by Lemma~\ref{lem:z crit}, is the imaginary part of $z$ where the level curve $\Re \mathcal{G}_1 = \tfrac{\log n}{2n}$ meets the edge of the critical strip, and $\delta$ is the radius of the image-disk $w( \ball_{1,\delta})$ in the $w$-plane. 
\end{lemma}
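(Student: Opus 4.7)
The approach rests on analyticity of $\mathcal{G}_1$ in $\U$, which turns the counting problem into an endpoint calculation. Both $\phase$ and $h_0$ are analytic and non-vanishing in $\U$ (see Lemma~\ref{lem:phase'} and \eqref{h0}); Lemma~\ref{lem:phase'} gives $\partial_z \phase \neq 0$ there, while $\partial_z \log h_0 = \bigo{1}$, so $\mathcal{G}_1'(z) \neq 0$ uniformly in $\U$ for large $n$. The level curve $y=Y(x)$ of Lemma~\ref{lem:z crit} is therefore a regular arc, and the Cauchy--Riemann equations force $\Im \mathcal{G}_1$ to be strictly monotone along it. Since each $\alpha_{k,n} \in \Acal_n$ is the unique point on the arc where $\Im \mathcal{G}_1 = 2\pi k / n$, the count reduces to
\[
| \Acal_n | = \frac{n}{2\pi} \bigl[ \Im \mathcal{G}_1(z_L) - \Im \mathcal{G}_1(z_R) \bigr] + \bigo{1},
\]
where $z_L = \tfrac{1}{2\lambda} + i \y$ is the endpoint on the edge of the critical strip and $z_R \in \partial \ball_{1,\delta}$ is the other endpoint.

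For $z_R$: the conformal identification $w^2 = \phase$ from \eqref{w relation} gives $|w(z_R)| = \delta$, and the condition $\Re \mathcal{G}_1(z_R) = \log n / (2n)$ forces $\Re\, w(z_R)^2 = \bigo{(\log n)/n}$. The normalization $w(z) = -i \sqrt{-\phase''(1)/2}\,(z-1)(1 + o(1))$ from \eqref{zeta local} together with the fact that $Y(x)$ approaches $z=1$ from the upper-left selects the branch $w(z_R) \approx \delta e^{i\pi/4}$, so $\Im \phase(z_R) = \delta^2 + o(1)$. Since $\Im \log h_0(z_R) = \bigo{1}$, one obtains $\Im \mathcal{G}_1(z_R) = \delta^2 + o(1)$, which is precisely what yields the $-n\delta^2/(2\pi)$ term in the final formula.

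For $z_L$: I substitute $z_L$ into the expansion \eqref{phase expand} and take imaginary parts. Using $\arg z_L = \tfrac{\pi}{2} - \tfrac{1}{2\lambda\y} + \bigo{(\lambda\y)^{-3}}$, $\log|z_L| = \log \y + \bigo{(\lambda\y)^{-2}}$, the bound $\arg \zeta(1 + i\lambda\y) = \bigo{\log\!\log \lambda\y}$ from Lemma~\ref{lem:arg bound}, and the boundedness of $\tilde r$ from \eqref{phase remainder}, a short direct computation yields
\[
\Im \phase(z_L) = \pi - \frac{1}{\lambda \y} - 2\y + \frac{\lambda \y}{2n}\bigl( 1 - \log \y \bigr) + \bigo{\frac{\log\!\log \lambda \y}{n}},
\]
and the $h_0$ correction $(1/n) \Im \log h_0(z_L) = \bigo{1/(n\lambda\y)}$ is absorbed into the error. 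Next I invoke Lemma~\ref{lem:crit pts} in the form $\tfrac{\lambda}{4n} \log(\lambda/(2\pi)) = 1 + \bigo{1/n}$ to rewrite $-2\y = -\tfrac{\lambda \y}{2n} \log(\lambda/(2\pi)) + \bigo{1/n}$ and $-\tfrac{1}{\lambda\y} = -\tfrac{\log(\lambda/(2\pi))}{4n\y} + \bigo{1/n^2}$. Using $\log(\lambda\y/(2\pi)) = \log(\lambda/(2\pi)) + \log\y$ to collect the logarithms produces the stated expansion of $\Im \mathcal{G}_1(z_L)$ (the discrepancy $-\log\y/(4n\y)$ between the two forms of the $\log$ is $\bigo{1/n}$ and is absorbed into the error). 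Multiplying the resulting endpoint difference by $n/(2\pi)$ delivers the claimed formula.

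The main obstacle is the last reshuffling: the $-2\y$ piece is, in absolute size, the dominant subleading contribution at $z_L$, and only the relation $4n = \lambda \log(\lambda/(2\pi)) + \bigo{1}$ from Lemma~\ref{lem:crit pts} lets one recast it as the logarithmic main term $-\tfrac{\lambda \y}{2n} \log(\lambda\y/(2\pi))$ up to a controllable remainder. Because $\y$ is bounded (indeed $\y \to 1/e$ from the Lambert-$\W$ formula in Lemma~\ref{lem:z crit}), all the leftover pieces $\bigo{\log\y/(n\y)}$, $\bigo{\y/n}$, $\bigo{1/(n\lambda\y)}$ are of order $1/n$ and so become merely $\bigo{1}$ in the count, safely absorbed in the unbounded $\bigo{\log\!\log \lambda\y}$ error term. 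Identifying exactly which such small quantities can be absorbed without corrupting the displayed subleading coefficients $\tfrac{\lambda\y}{4\pi}$ and $\tfrac{1}{8\pi\y}\log(\lambda\y/(2\pi))$ is the delicate bookkeeping that makes up the bulk of the work.
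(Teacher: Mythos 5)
Your proposal is correct and follows essentially the same route as the paper: reduce $|\Acal_n|$ to $\tfrac{n}{2\pi}$ times the difference of $\Im\mathcal{G}_1$ at the two endpoints of the level arc, evaluate the ball-boundary endpoint via the conformal map $w$ (yielding $\Im\phase = \delta^2 + o(1)$), evaluate the strip-edge endpoint from \eqref{phase expand} together with Lemma~\ref{lem:arg bound}, and use \eqref{lambda-func-asymp} to recast $-2\y$ and $-1/(\lambda\y)$ as the logarithmic terms. The bookkeeping you describe (including absorbing the $\bigo{1}$ count discrepancies into $\bigo{\log\!\log\lambda\y}$) matches the paper's computation.
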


\begin{proof}
Moving along the level curve $\Re \mathcal{G}_1 = \tfrac{\log n }{2n}$ from $\Re z = 1$ towards the critical strip, $\Im \mathcal{G}_1$ is strictly increasing, so denoting by $z_0$ the point at which $\Re \mathcal{G}_1 = \tfrac{\log n }{2n}$ intersects the boundary of $\ball_{1,\delta}$, and noting that $\log h_0$ is bounded outside $\ball_{1,\delta}$,
the number of approximate zeros in $\U$ is given by
\[
	\frac{n}{2\pi} \lb 
	\Im \mathcal{G}_1 \lp \frac{1}{2\lambda} + i \y \rp - 
	\Im \mathcal{G}_1 \lp z_0 \rp
	\rb
	=
	\frac{n}{2\pi} \lb 
	\Im \phase \lp \frac{1}{2\lambda} + i \y \rp - 
	\Im \phase \lp z_0 \rp
	\rb + \bigo{1}.
\]

Recall that the set $\ball_{1,\delta}$ is chosen such that the image $w(\ball_{1,\delta})$ under the map $w = w(z)$ defined by \eqref{w relation}-\eqref{zeta local} is a disk of radius $\delta$. The condition that $\Re \phase(z_0) = \Re w^2  = \delta^2 \cos(2\arg w) = (\log n)/2n$ gives 
\[
	\Im \phase(z_0) =  \delta^2 \sin(2\arg w) 
	= \delta^2 - \bigo{(\log n)^2/(n\delta^2)^2}. 
\]
Similarly, using Lemma~\ref{lem:arg bound}, \eqref{phase expand} and \eqref{small x expansions}
we have
\begin{align*}
	\Im \phase \lp \frac{1}{2\lambda} + i \y \rp 
	&= 2 \lb \frac{\pi}{2} - \y - \frac{1}{2\lambda \y}  \rb 
	+ \frac{\lambda}{2n} \lb \y - \y \log \y  \rb  
	+ \bigo{ \frac{\log\log \lambda \y}{n} } \\
	&= \pi - \frac{\lambda \y}{2n} \log \lp \frac{ \lambda \y}{2\pi} \rp 
	+ \frac{ \lambda \y}{2n} - \frac{1}{4n \y} \log \lp \frac{ \lambda \y}{2\pi} \rp
	+ \bigo{ \frac{\log\log \lambda \y}{n} }
\end{align*}
where we have used the estimate $\lambda = \bigo{n/\log n}$ implied by Lemma~\ref{lem:crit pts} to drop lower order terms and in the last equality we've used the first asymptotic statement in Lemma~\ref{lem:crit pts} to simplify. The result follows immediately. 
\end{proof}

\begin{lemma}\label{lem:true zeros outside} 
Fix $A>0$ to define a zero free region $\zerofree$ as in \eqref{zero-free}. 
Let $T_{2n-2}(\lambda z)$ be the rescaled Taylor polynomial of degree $2n-2$ defined by \eqref{represent} and Lemma~\ref{lem:crit pts}; 
let $z_{k,n}$ and  $\alpha_{k,n}$ denote actual and approximate zeros of $T_{2n-2}(\lambda z)$ defined by \eqref{zero condition 1} and \eqref{zero condition 2} respectively.  
Then for all sufficiently large $n$, each approximate zero $\alpha_{k,n} \in \zerofree$ corresponds to a distinct zero $z_{k,n}$ of $T_{2n-2}(\lambda z)$. Moreover, 
\[
	|z_{k,n} - \alpha_{k,n}| = \bigo{ n^{-2} }.
\]
\end{lemma}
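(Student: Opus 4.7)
The plan is to show that the defining equations \eqref{zero condition 1} and \eqref{zero condition 2} for actual and approximate zeros differ by only $\bigo{n^{-2}}$ on the relevant region, and then to apply Rouch\'e's theorem on a small disk about each $\alpha_{k,n}$. Combining Theorem~\ref{thm:T asymp} (which gives $\Ecal(z) = \bigo{n^{-1}}$ on $\ball_\delta^c$) with the identity $h(z) = h_0(z) + \Ecal(z)$ from \eqref{R}, I would establish
\begin{equation*}
    \mathcal{G}(z) - \mathcal{G}_1(z) = \tfrac{1}{n}\log \lp 1 + \Ecal(z)/h_0(z) \rp = \bigo{n^{-2}}
\end{equation*}
uniformly for $z$ in $\zerofree \setminus \ball_\delta$ (using that $h_0$ is bounded below away from the stationary points $z = \pm 1$). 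Evaluated at an approximate zero, this says that the residual $F_{k,n}(\alpha_{k,n}) := \mathcal{G}(\alpha_{k,n}) - \tfrac{\log n}{2n} - \tfrac{2 k \pi i}{n}$ of the true-zero equation is of size $\bigo{n^{-2}}$.

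Next, Lemma~\ref{lem:phase'} gives $|\partial_z \phase(z)| \geq \rho > 0$ for $z \in \zerofree \setminus \ball_{1,\delta}$, and since $h$ is analytic and nonvanishing there, $\partial_z(n^{-1}\log h) = \bigo{n^{-1}}$; thus $|\partial_z F_{k,n}(z)| \geq \rho/2$ on a fixed neighborhood of $\alpha_{k,n}$ for $n$ large, and similarly $|\partial_z^2 F_{k,n}|$ is uniformly bounded. I would then apply Rouch\'e's theorem on the disk $D_{k,n} := \{|z - \alpha_{k,n}| \leq C n^{-2}\}$, comparing $F_{k,n}$ to its affine linearization at $\alpha_{k,n}$. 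For $C$ chosen large enough to dominate the constant in the previous step, the affine part has modulus $\gtrsim n^{-2}$ on $\partial D_{k,n}$, while the Taylor remainder is $\bigo{n^{-4}}$; Rouch\'e therefore supplies a unique zero $z_{k,n}$ of $F_{k,n}$ in $D_{k,n}$, so $|z_{k,n} - \alpha_{k,n}| = \bigo{n^{-2}}$. Distinctness across different $k$ follows because consecutive $\alpha_{k,n}$'s are separated by $\gtrsim 1/n$ on $D_n^{(1)}$ (from $|\mathcal{G}_1(\alpha_{k+1,n}) - \mathcal{G}_1(\alpha_{k,n})| = 2\pi/n$ together with the upper bound on $|\partial_z \mathcal{G}_1|$), so the disks $D_{k,n}$ are disjoint for $n$ large.

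The most delicate point I expect is the uniform control of $\Ecal(z)/h_0(z)$ along the level curve $D_n^{(1)} \cap \zerofree$, since $h_0(z) \sim 1/(1-z^2)$ decays as $|\Im z|$ grows whereas Lemma~\ref{lem:r} only gives $\Ecal = \bigo{n^{-1}}$ without a matching decay. One way to address this is to sharpen Lemma~\ref{lem:r} by noting that the Cauchy kernel $1/(w-z)$ in \eqref{r cauchy} contributes an additional factor of $\bigo{|z|^{-1}}$ when $z$ is far from the contour $\Gamma_\Ecal$, so that $\Ecal$ inherits the decay of $h_0$; alternatively, one can simply allow a mild dependence of the $\bigo{n^{-2}}$ implied constant on the height of $\alpha_{k,n}$, which is harmless for the counting application in Theorem~\ref{thm:zeros-outside}.
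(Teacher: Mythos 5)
Your proposal is correct and follows essentially the same route as the paper: both arguments rest on the estimate $\mathcal{G}-\mathcal{G}_1=\tfrac{1}{n}\log(h/h_0)=\bigo{n^{-2}}$ coming from \eqref{h0}, together with the lower bound $|\partial_z\phase|\ge\rho$ from Lemma~\ref{lem:phase'}; the paper closes the local-inversion step with the implicit function theorem in the parameter $\eps=n^{-1}$ where you use Rouch\'e on a disk of radius $Cn^{-2}$, and these are interchangeable here. The delicacy you flag about $\Ecal(z)/h_0(z)$ does not arise, since the approximate zeros lie on the bounded level curve described in Lemma~\ref{lem:z crit}, on which $h_0$ is uniformly bounded away from zero.
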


\begin{proof}
Fix $n$. 
Clearly for $k \neq \ell$, any solutions $z_{k,n}$ and $z_{\ell,n}$ of \eqref{zero condition 1} in the zero free region of $f$ are distinct as each corresponds to a distinct value of the single valued function $\Im(\phase(z)-n^{-1}\log h(z))$. 
Using \eqref{zero condition 2}, the root condition \eqref{zero condition 1} can be rewritten in the form $\mathfrak{G}_k( z_{k,n}, n^{-1} ) = 0$ where 
\begin{align*}
	\mathfrak{G}_k(z,\eps) 
	&= \phase(z) - \phase(\alpha_{k,n}) + \eps \lb \log h(z) - \log h_0(\alpha_{k,n}) \rb.
\end{align*}
Now, $\mathfrak{G}_k(\alpha_{k,n}, 0) = 0$ and Lemma~\ref{lem:phase'} guarantees that for all sufficiently large n (independent of k), $|\partial_z\mathfrak{G}_k(\alpha_{k,n},0)| =  |\partial_z \phase(\alpha_{k,n})| \geq \rho > 0$ for all sufficiently large $n$. 
Invoking the implicit function theorem, there exist a unique solution $z_{k,n}$ of \eqref{zero condition 1} for all sufficiently large $n$ in a neighborhood of $\alpha_{k,n}$. Expanding, we have (again uniformly in k)
\[
	z_{k,n} = \alpha_{k,n} - \frac{1}{n} 
	\frac{ \log h(\alpha_{k,n}) - \log h_0(\alpha_{k,n})}{\partial_z \phase(\alpha_{k,n})} 
	+ \bigo{\frac{1}{n^2}}.
\]
Recalling \eqref{h0}, we observe that $\log h(\alpha_{k,n}) - \log h_0(\alpha_{k,n}) = \bigo{n^{-1}}$ which completes the result.
\end{proof}

\subsection{Number of zeros near the stationary points}
\label{sec:saddle zeros}

Near the stationary points $z = \pm 1$ the zeros of $T_{2n}(\lambda z)$ are not spaced uniformly along the level curve $D_n^{(1)}$.  
Theorem~\ref{thm:T asymp} suggest that the zeros of $T_{2n-2}(\lambda z)$ should be well approximated by the zeros of $g(i n^{1/2} w(z))$. 
Recall that $\ball_\delta$ is chosen such that the scaling map $\up(z) = i n^{1/2} w(z)$ 
maps $\ball_\delta$ to a disk of radius $n^{1/2} \delta$ centered at the origin in the $\up$-plane, \ie, $\up(\ball_{1,\delta}) = \mathbb{D}(0, n^{1/2} \delta)$.
The zeros of $\erfc(\upsilon)$ are well known and come in conjugate pairs \cite[\S 7.13(ii)]{DLMF}. 
Enumerating the zeros of $\erfc(\upsilon)$ in $\C^+$ by $\upsilon_k =\mu_k + i \nu_k$, according to increasing absolute value, the large modulus zeros of $\erfc(\upsilon)$ are asymptotically given, for $k \gg 1$, by
\begin{gather}\label{erfc zeros}
	\upsilon_k := \mu_k + i \nu_k \\
	\nonumber
	\begin{aligned}
		\mu_k &= - \varsigma + \frac{1}{4} \tau \varsigma^{-1} 
			- \frac{1}{16}(1-\tau  + \frac{1}{2} \tau^2) \varsigma^{-3} + \dots 
		&&\qquad \varsigma = \sqrt{(k-1/8) \pi}	
			\\
		\nu_k &=  \varsigma + \frac{1}{4} \tau \varsigma^{-1} 
			+ \frac{1}{16}(1-\tau + \frac{1}{2} \tau^2) \varsigma^{-3} + \dots 
		&& \qquad \tau = \log \lp 2 \varsigma \sqrt{2\pi} \rp	
	\end{aligned}
	\shortintertext{from which it follows that }
	\label{erfc zero size}
	|\upsilon_k|^2 = 2\pi(k-1/8) + \bigo{ k^{-1} \log(k)^2}.
\end{gather}	

To count the number of zeros of $T_{2n-2}(\lambda z)$ in $B_{\pm 1,\delta}$ we first introduce the integer valued functions
\begin{equation}\label{Kcount}
	\K^-(n,\delta) := \floor*{ \frac{n \delta^2}{2\pi} - \frac{3}{8} }
	\quad \text{and} \quad 
	\K^+(n,\delta) := \ceil*{ \frac{n \delta^2}{2\pi} - \frac{3}{8} },
\end{equation}
where $\floor{x}$ and $\ceil{x}$ are the floor and ceiling functions respectively. 
\begin{lemma}
\label{lem:local zeros}
There exist $\delta_0 > 0$ such that for any fixed $\delta$,  $0 < \delta < \delta_0$, there exist $n_0(\delta_0, \delta)$ such that for any $n > n_0$ 
the Taylor polynomial $T_{2n-2}(\lambda z)$ has either $2\K^-(n,\delta)$ or $2\K^+(n,\delta)$ zeros in $\ball_{\pm 1,\delta}$.
\end{lemma}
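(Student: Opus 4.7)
The plan is to apply Rouch\'e in the local coordinate $\upsilon=i\sqrt n\,w(z)$, comparing $T_{2n-2}(\lambda z)$ to its leading local approximation $\tfrac12\erfc(\upsilon)$. By the symmetry $T_{2n-2}(\lambda z)=T_{2n-2}(-\lambda z)$ it suffices to work in $\ball_{1,\delta}$, where $f(\lambda z)$ is analytic and nonzero. Theorem~\ref{thm:T asymp} then shows that the zeros of $T_{2n-2}(\lambda z)$ in $\ball_{1,\delta}$ coincide with the zeros of $\tfrac12\erfc(i\sqrt n\,w(z))-\tfrac{1}{\sqrt n}\,e^{n\phase(z)}\,\Ecal(z)$. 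By construction \eqref{ball def}, the substitution $\upsilon=i\sqrt n\,w(z)$ is a conformal bijection $\ball_{1,\delta}\to\{|\upsilon|<\sqrt n\,\delta\}$ (asymptotically isometric at $z=1$), under which $e^{n\phase(z)}=e^{-\upsilon^2}$. The counting problem becomes that of zeros of
$\Phi(\upsilon):=\tfrac12\erfc(\upsilon)-\tfrac{e^{-\upsilon^2}}{\sqrt n}\,\widetilde\Ecal(\upsilon)$ in $\{|\upsilon|<\sqrt n\,\delta\}$, with $\widetilde\Ecal:=\Ecal\circ z$.

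The delicate step is verifying $|\tfrac12\erfc(\upsilon)|>|\widetilde\Ecal(\upsilon)|\,|e^{-\upsilon^2}|/\sqrt n$ on a Rouch\'e contour just inside (or just outside) $|\upsilon|=\sqrt n\,\delta$. The zeros $\upsilon_k,\bar\upsilon_k$ of $\erfc$ cluster near the rays $\arg\upsilon=\pm 3\pi/4$, with $|\upsilon_k|^2=2\pi(k-1/8)+O(k^{-1}\log^2 k)$ by \eqref{erfc zero size}. Lemma~\ref{lem:r} supplies the crucial boundedness of $\widetilde\Ecal$ on $\ball_\delta$, via the cancellation of the singularities of $h_0(z)$ and $1/(2i\sqrt\pi\,w(z))$ at $z=1$. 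In the sectors $|\arg\upsilon|<3\pi/4-\eta$ the first line of \eqref{erfc asymp} gives $|\tfrac12\erfc(\upsilon)|\asymp|e^{-\upsilon^2}|/|\upsilon|$, and the required inequality reduces to $1/|\upsilon|\gg|\widetilde\Ecal|/\sqrt n$, i.e.\ $\delta\ll 1$. In the narrow complementary sector about $\arg\upsilon=\pm 3\pi/4$ I use the local Taylor expansion $\erfc(\upsilon)\approx\erfc'(\upsilon_k)(\upsilon-\upsilon_k)$; a direct computation from \eqref{erfc zeros} yields $|e^{-\upsilon_k^2}|\sim 2\sqrt\pi\,|\upsilon_k|$, so $|\erfc'(\upsilon_k)|=\tfrac{2}{\sqrt\pi}|e^{-\upsilon_k^2}|\sim 4|\upsilon_k|$. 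Choosing the contour radius $\sqrt n\,\delta'$ with $|\delta'-\delta|=O(1/\sqrt n)$ midway between two consecutive zero-moduli produces separation $\dist(\upsilon,\{\upsilon_k\})\gtrsim\pi/(2|\upsilon_k|)$, so $|\erfc(\upsilon)|\gtrsim 2\pi$. On the same contour $|e^{-\upsilon^2}|\asymp|\upsilon|=\sqrt n\,\delta$, so the error is $O(\delta)$. Rouch\'e's inequality therefore holds for all $\delta<\delta_0$ and some absolute $\delta_0>0$, which fixes the admissible range.

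Once Rouch\'e applies, the count of zeros of $\Phi$ in $\{|\upsilon|<\sqrt n\,\delta'\}$ equals that of $\erfc$ there; by conjugate symmetry this is $2\K$ with $\K=\#\{k\ge 1:|\upsilon_k|<\sqrt n\,\delta'\}$. Inverting the asymptotic \eqref{erfc zero size} and absorbing the $O(1/\sqrt n)$ ambiguity in $\delta'$ places $\K\in\{\K^-(n,\delta),\K^+(n,\delta)\}$ as defined in \eqref{Kcount}; the same count holds in $\ball_{-1,\delta}$ by symmetry. The main obstacle is the Rouch\'e estimate in the thin sector about $\arg\upsilon=\pm 3\pi/4$, which hinges on the linear growth $|\erfc'(\upsilon_k)|\sim 4|\upsilon_k|$ and the midway choice of contour; everywhere else the standard steepest-descent asymptotics afford easy room.
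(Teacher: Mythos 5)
Your proposal follows the paper's proof almost step for step: the same symmetry reduction to $\ball_{1,\delta}$, the same passage to the variable $\upsilon=i\sqrt{n}\,w(z)$, a Rouch\'e comparison against $\erfc$ on circles threaded between consecutive zero moduli $|\upsilon_k|$, and the count via \eqref{erfc zero size}. The one place you diverge is the delicate lower bound on $|\erfc(\upsilon)|$ in the thin sector about $\arg\upsilon=\pm 3\pi/4$, and there your argument does not quite close as written. You linearize about the nearby zero, getting $|\erfc(\upsilon)|\gtrsim|\erfc'(\upsilon_k)|\,|\upsilon-\upsilon_k|\sim 4|\upsilon_k|\cdot\tfrac{\pi}{2|\upsilon_k|}=2\pi$; but $|\erfc''(\upsilon_k)|=\tfrac{4|\upsilon_k|}{\sqrt{\pi}}|e^{-\upsilon_k^2}|\sim 8|\upsilon_k|^2$, so the quadratic remainder over a disk of radius $\pi/(2|\upsilon_k|)$ is of size up to $\pi^2$, i.e.\ \emph{larger} than the linear term you are keeping. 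The approximation $\erfc(\upsilon)\approx\erfc'(\upsilon_k)(\upsilon-\upsilon_k)$ is therefore not justified at the minimal separation your contour guarantees. The paper avoids this by working with $g(\upsilon)=\tfrac12 e^{\upsilon^2}\erfc(\upsilon)$ and its exact asymptotic form $e^{\upsilon^2}+\tfrac{1}{2\sqrt{\pi}\,\upsilon}+\dots$: for $\upsilon=R_\delta^{\pm}e^{i(3\pi/4+\alpha)}$ with $|\alpha|$ small, the imaginary parts of the two leading terms have the same sign (thanks to the choice $R_\delta^{\pm 2}=2\pi(\K^\pm+3/8)$), so $|g|\ge|\Im g|\ge C/R_\delta^{\pm}$ with no Taylor expansion needed; for larger $|\alpha|$ the exponential is either negligible or dominant. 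You should replace your linearization with an argument of this kind (or otherwise exploit the exact relation $\erfc(\upsilon)=2+\frac{e^{-\upsilon^2}}{\sqrt{\pi}\upsilon}(1+\bigo{\upsilon^{-2}})$ rather than a Taylor polynomial at $\upsilon_k$).

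A second, smaller point of bookkeeping: the lemma counts zeros in $\ball_{1,\delta}$ itself, not in $\up^{-1}(\mathbb{D}(0,\sqrt{n}\,\delta'))$ for a perturbed radius $\delta'$. Your single adjusted contour counts the latter; to get the stated ``either $2\K^-$ or $2\K^+$'' for $\ball_{1,\delta}$ you need both an inner contour of radius $R_\delta^-\le\sqrt{n}\,\delta$ and an outer one of radius $R_\delta^+\ge\sqrt{n}\,\delta$, sandwiching $\ball_{1,\delta}$; the ambiguity is precisely whether the one conjugate pair of zeros in the intervening annulus falls inside $\ball_{1,\delta}$. This also requires extending $\Ecal$ analytically slightly past $\partial\ball_{1,\delta}$ (the paper does this by deforming $\Gamma_\Ecal$), which you should note if $\delta'>\delta$.
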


\begin{proof}
Due to even symmetry of $T_{2n-2}(\lambda z)$ we consider only $\ball_{1,\delta}$.
For simplicity, temporarily let $\up(z) = i n^{1/2} w(z)$ and write $g(\up) =  \frac{1}{2} e^{\up^2} \erfc(\up)$. 
As $f(\lambda z)$ is zero free in $\ball_{1,\delta}$, the representation \eqref{T asymp} implies that $T_{2n-2}(\lambda z)$ has the same number of zeros in $\ball_{1,\delta}$ as the function $g(\up(z)) + n^{-1/2} \Ecal(z)$. 

For a fixed choice of $\delta > 0$ define the radii 
\[
R^{-}_{\delta} = \sqrt{ 2\pi \lp \K^-(n,\delta) + \frac{3}{8} \rp },
	\qquad 
R^{+}_{\delta} = \sqrt{ 2\pi \lp \K^+(n,\delta) + \frac{3}{8} \rp }	.
\]
The proof follows from Rouche's theorem.
From \eqref{erfc zero size} it follows that there are exactly 
$2 \K^-(n,\delta)$ zeros of $g(\up)$ in $\mathbb{D}(0, R_\delta^-)$ 
and
$2 \K^+(n,\delta)$ zeros of $g(\up)$ in $\mathbb{D}(0, R_\delta^+)$ 
for any  sufficiently large $n$. 
Lemma~\ref{lem:r} guarantees for all $z \in \up^{-1}(\mathbb{D}(0, R_\delta^+))$ that $| \Ecal(z)| < C_0 $ for some fixed positive constant $C_0>0$ 
independent of $\delta$\footnote{
We are slightly abusing notation here, in \eqref{R} $\Ecal(z)$ is piecewise defined inside and outside $\ball_\delta$. What we mean here is the analytic extension of $\Ecal(z)$ from inside $\ball_{1,\delta}$ to a set containing $\up^{-1}(\mathbb{D}(0,R_\delta^+))$ which can be see to exist simply by deforming the contour $\Gamma_\Ecal$ in \eqref{r cauchy}.  
}. 
As we will show below, there also exist a constant $C_1>0$ such that $|g(\up)| > C_1 / R^\pm_\delta$ on the circles of radii $R_\delta^\pm$. 
By choosing $\delta$ such that $R^{\pm}_\delta / n^{1/2}  < \delta_0 = C_1/C_0$, Rouche's theorem implies that $T_{2n-2}(\lambda z)$ has $2\K^-(n,\delta)$ zeros in $\up^{-1}(\mathbb{D}(0, R_\delta^-))$ and $2\K^+(n,\delta)$ zeros in $\up^{-1}(\mathbb{D}(0, R_\delta^+))$. 
The result then follows from observing that $\up^{-1}(\mathbb{D}(0, R_\delta^-)) \subseteq \ball_{1,\delta} \subseteq \up^{-1}(\mathbb{D}(0, R_\delta^+))$, so we have determined the number of zeros $T_{2n-2}(\lambda z)$ in $\ball_{1,\delta}$ to within two, depending on the location of the extra pair of zeros in $\up^{-1}(\mathbb{D}(0, R_\delta^+))$. 

It remains to show that $|g(\up)| > C_1/R_\delta^\pm$ for $|\up| = R_\delta^\pm$. 
For any $\delta>0$, $R_\delta^\pm \sim \delta n^{1/2}$ so for all sufficiently large $n$ the asymptotic series for $g(\up)$ given in \eqref{erfc asymp} can be applied. 
Away from the rays $\arg (\up) = \pm 3\pi/4$ the desired bound is immediate; when one needs the second expansion in \eqref{erfc asymp} the exponential term is either beyond all orders small or dominant away from these rays---in either case the previous bound holds. 
To bound the behavior of $g(\up)$ on the disk boundary near the lines $\pm \arg(\up) = 3\pi/4$ first observe that $g(\widebar{\up}) = \widebar{g(\up)}$ so it is sufficient to only consider $\arg(\up)$ near $3\pi/4$. 
Write $\up = R^-_\delta e^{i(3\pi/4 + \alpha)}$, the case when $|\up| = R^+_\delta$ can be treated identically. 
Letting $\K^- = \K^-(n,\delta)$, the first terms in the asymptotic expansion gives
\begin{multline*}
	g(\up) = e^{(R^-_\delta)^2 \sin 2\alpha} 
		\lb \cos \lb (2\pi \K^- +3\pi/4) \cos 2\alpha \rb 
		- i  \sin \lb (2\pi \K^- +3\pi/4) \cos 2\alpha \rb  \rb \\
	+  \frac{1}{2\sqrt{\pi } R^-_\delta} \lb \cos(3\pi/4 + \alpha) - i \sin (	3\pi/4 + \alpha) \rb + \bigo{(R_\delta^{-})^2}.
\end{multline*}
For $2|\alpha| \leq \arccos \lp \frac{ 2\pi K^-}{ 2\pi K^- + 3\pi/4} \rp$, the imaginary parts of the first two terms are both negative and so the sum has a larger (in absolute value) imaginary part than either term separately, hence 
\[
	|g(\up)| \geq | \Im g(\up) | \geq \frac{C}{R_\delta^-} 
	\quad \text{for} \quad 
	2|\alpha| \leq \arccos \lp \frac{ 2\pi K^-}{ 2\pi K^- + 3\pi/4} \rp.
\]
On the other hand for $\arccos \lp \frac{ 2\pi K^-}{ 2\pi K^- + 3\pi/4} \rp <  2|\alpha| \leq \pi/50$,
\[
	|e^{\up^2}| = e^{ (R_\delta^-)^2 \sin 2\alpha} 
	\gtrless e^{\pm 2\pi (\K^-+3/8) 
	\sqrt{1 - \lp \frac{ 2\pi K^-}{ 2\pi K^- + 3\pi/4} \rp^2 }} 
	= e^{\pm \sqrt{3}\pi \sqrt{\K^-} } \lb 1 + \bigo{\frac{1}{\sqrt{\K^-} } } \rb,
\]
where the upper (lower) inequality and signs are taken if $\alpha$ is positive (negative).
Now, since $\K^- \sim n \delta^2$, the exponential is already beyond all orders separated in scale from the algebraic terms in the expansion and we see that $|g(\up)| \leq C/R_\delta^-$ for $|\alpha| < \pi/100$. 
\end{proof}

%
Combining the results of Sections~\ref{sec:bulk zeros}-\ref{sec:saddle zeros} we can now prove Theorem~\ref{thm:zeros-outside}, the main result of Section~\ref{sec:taylor zeros}.
\begin{proof}[Proof of Theorem~\ref{thm:zeros-outside}]
Since $\mathcal{G} - \mathcal{G}_{1} = \frac{1}{n}\log{\frac{h}{h_{0}}}$, and $\frac{1}{n} \log{\frac{h}{h_{0}}}$ as well as its derivative are uniformly $\mathcal{O}\left( n^{-2} \right)$ in $\U$, there is a single level curve of $\Re \mathcal{G} = \frac{\log{n}}{2n}$ in $\U$, on which all zeros of $T_{2n-2}(\lambda z)$ in $\U$ must live.  As one traverses this level curve from $\partial \ball_{1,\delta}$ to the point where it intersects the vertical line $\{z: \Re z = \frac{1}{2 \lambda} \}$, $\Im \mathcal{G}$ is strictly monotone increasing.  Each root of $T_{2n-2}(\lambda z)$ in $\U$ must satisfy (\ref{zero condition 1}) for some integer $k$.  The monotonicity of both $\Im \mathcal{G}$ and $\Im \mathcal{G}_{1}$  along the associated level curves, and the fact that  
$\left| \Im \mathcal{G} - \Im \mathcal{G}_{1} \right| \le c n^{-2}$ uniformly in $\U$,
imply that the only roots of $T_{2n-2}(\lambda z)$ within $\U$ are those identified in Lemma \ref{lem:true zeros outside}.

We observe that: 1) the boundedness of the derivative $\partial_z \phase$ near the edge of the critical strip (and in any compact subset of $\U$) along with the smoothness of the level curve $\Re \mathcal{G}_1 = \tfrac{ \log n}{2n}$ implies that there exist a constant $c>0$ such that zeros $|z_{k,n} - z_{j,n}| > c/n$; and 2) Lemma~\ref{lem:true zeros outside} guarantees that $|z_{k,n} - \alpha_{k,n}| = \bigo{n^{-2}}$.

Now the left-most approximate zero in $\U$ corresponds to a root of $T_{2n-2}(\lambda z)$ that may or may not lie within $\U$.  Likewise, the approximate zero $\alpha_{\tilde{k},n}$ within the critical strip that is closest to the vertical line $\{ Re z = \frac{1}{2 \lambda} \}$ corresponds to a root of $T_{2n-2}(\lambda z)$ that may or may not lie within $\U$.  Similar considerations for those approximate roots near $\partial \ball_{1,\delta}$ show that, again, there could be up to $2$ additional roots (or 2 fewer roots) of $T_{2n-2}(\lambda z)$ near $\partial \ball_{1,\delta}$, because of boundary effects.  So we have shown that
\[
\left| \  \left| \left\{
 \mathcal{Z}_{n} \cap \U
\right\} \right| - |\mathcal{A}_n|  \  \right| \le 4.
\]

Combining the this observation with Lemmas~\ref{lem:outer zeros} and \ref{lem:true zeros outside} we get an expression for the number of true zeros $z_{k,n}$ outside the critical strip (by left-right symmetry we multiply $|\Acal_n|$ by $2$) which are bounded away from the stationary points $z =\pm 1$. Lemma~\ref{lem:local zeros} gives an exact count of the number of zeros in $\ball_{\pm1 , \delta}$ (of which half of each are in $\mathcal{F}_\lambda$). Summing these contributions we find that 
\[
	|\mathcal{Z}_n| = n 
		- \frac{\lambda \y}{2\pi} \log \lp \frac{ \lambda \y}{2\pi} \rp  
		+ \frac{\lambda \y}{2\pi } 
	 	- \frac{1}{4\pi \y} \log \lp \frac{ \lambda \y}{2\pi} \rp  
		- \frac{n \delta^2}{\pi} + 2\K^-(n,\delta) + \bigo{\log\!\log \lambda \y}.
\]
The result then follows from observing that 
\[
	\left| 2\K^-(n,\delta) - \frac{n \delta^2}{\pi} \right|
	= \left| 2\floor*{\frac{n\delta^2}{2\pi} - \frac{3}{8}}- \frac{n \delta^2}{\pi} \right|
	\le 2.
\]
\end{proof}

\section{Convergence rates of true zeros} 
\label{sec:true zeros}

\begin{table}
\centering
\begin{tabular}[t]{@{}l@{\qquad\quad}l@{\hskip 0.2\linewidth}l@{\qquad\quad}l@{}}
\toprule
$k$ & $|s_k - z_{k,n}|$ & $k$ & $|s_k - z_{k,n}|$  \\ \midrule
1 & $3.4293*10^{-34}$ & 7  & $1.5374*10^{-18}$  \\
2 & $6.9534*10^{-32}$ & 8  & $6.5531*10^{-17}$ \\
3 & $1.4748*10^{-30}$ & 9  & $3.6990*10^{-13}$  \\
4 & $9.8245*10^{-26}$ & 10 & $6.4702*10^{-12}$ \\
5 & $6.3374*10^{-24}$ & 11 & $5.2363*10^{-6}$  \\
6 & $7.1106*10^{-21}$ & & \\ \bottomrule
\end{tabular}
\caption{Tabulated here are the differences between the 11 numerical calculated of the zeros $z_{k,n}$ of $T_{202}(\lambda z)$ on the critical line depicted in Figure~\ref{fig:zeta}, and the first 11 (rescaled) zeros of the $\xi(s+1/2)$ function, denoted here as $s_k$.
\label{table:zeros}
} 
\end{table}

In this section we turn our attention to those zeros of $T_{2n-2}(\lambda z)$ which converge to roots of $\xi(z+1/2)$ as $n \to \infty$. Recall that Hurwitz theorem guarantees that near any root $s$ of order $m$ of $f(z)$ in the unscaled plane, there will be exactly $m$ zeros of $T_{n}(z)$ for all sufficiently large $n$, and that these will converge to $s$ as $n \to \infty$. 
These are the `Hurwitz zeros' of $T_{n}(z)$. 
In Figure~\ref{fig:zeta} there are 11 zeros of $T_{202}(\lambda z)$ on the positive critical line below the level curve $D_n^{(0)}$. The absolute error between these numerically computed Hurwitz zeros of the Taylor polynomial zeros and the first 11 nontrivial roots of the $\xi(\lambda z+1/2)$ function are given in Table~\ref{table:zeros}. The agreement is surprising good, particularly considering that the scaling factor $\lambda \approx 133 $ for $T_{202}(\lambda z)$. 

Suppose that $s$ is an order $m$ zero\footnote{It is widely believed, but unproven, that all zeros of $\xi$ are simple} of $f(s) = \xi(s+1/2)$, and suppose that $\lambda z = s+ \mu$ is a Hurwitz zero of the Taylor polynomial $T_{2n-2}(\lambda z)$. 
As $\Re s < \tfrac{1}{2} < \lambda$  our representation \eqref{represent} of $T_{2n}(\lambda z)$ gives
\begin{align}
	T_{2n-2}(s+\mu) = f(s+\mu) + 
	\frac{ (s+\mu)^{2n} f(\lambda) }{ \lambda^{2n} \sqrt{n}} h(s/\lambda+\mu/\lambda).
\end{align}
Expanding around $s$ gives
\begin{gather}
	T_{2n-2}(s+\mu) = \frac{f^{(m)}(s)}{m!} \mu^{m} + \frac{s^{2n} f(\lambda)}{\lambda^{2n} \sqrt{n}} h(s/\lambda) + R(s,\mu) 
\shortintertext{where $R$ is the explicit remainder}
	\begin{multlined}[.9\textwidth] \label{RRR}
	R(s,\mu) = \lb f(s+\mu) - \frac{f^{(m)}(s)}{m!} \mu^m \rb \\
	+ \frac{s^{2n} f(\lambda)}{\lambda^{2n} \sqrt{n} } \lb  
	h\lp \frac{s+\mu}{\lambda} \rp - h\lp \frac{s}{\lambda} \rp
	+ \lp (1+\frac{\mu}{s})^{2n} - 1 \rp h\lp \frac{s+\mu}{\lambda} \rp \rb.
	\end{multlined}
\end{gather}

Consider how the factor $\frac{s^{2n} f(\lambda)}{\lambda^{2n} \sqrt{n} }$ behaves as $n \to \infty$. Using Stirling's series for $\Gamma$,  we find
\begin{equation}
	f(\lambda) = \lp \frac{\lambda}{2\pi e} \rp^{\lambda/2} 
	\lp \frac{\lambda}{2\pi}\rp^{7/4}  2\sqrt{2} \pi 
	\lb 1 - \frac{1}{48\lambda} + \bigo{\lambda^{-2}} \rb
\end{equation}
where we note that $\zeta(\lambda+1/2) - 1$ is beyond all orders small and so makes no contribution to the asymptotic expansion. This can be further simplified using Lemma~\ref{lem:crit pts}; setting 
$u = \frac{2n}{\pi}$, and recalling the defining relation for $W=W(u)$, \ie, $W e^{W} = u$ we have
\begin{equation}
	\frac{\lambda}{2\pi} = \frac{u}{W} \lb 1 + \bigo{n^{-1}} \rb 
	= e^W \lb 1 + \bigo{n^{-1}} \rb
\end{equation}
which allows us to write
\begin{equation}
	f(\lambda) \sim \lp \frac{\lambda}{2\pi e} \rp^{\lambda/2} 
	\lp \frac{\lambda}{2\pi}\rp^{7/4}
	\sim  e^{2n(1-W^{-1} + \frac{7}{8n}W )  }. 
\end{equation}
so finally we have the asymptotic estimate
\begin{equation}\label{small factor}
	\left| \frac{ s^{2n} f(\lambda) }{ \lambda^{2n} \sqrt{n} } \right|
	\sim 
	\exp \lb -2n \lp \log \left| \frac{\lambda}{s} \right| - 1 + W^{-1} 
	- \frac{7}{8n} W + \frac{1}{4n} \log n \rp \rb
\end{equation}
which is uniformly exponentially small provided that $|s| < \frac{\lambda}{e}$.

\begin{thm}\label{true zero convergence}
	Let $s$ be a fixed zero of $\xi(s+1/2)$ of order $m$. Then there exist an $N_0 =N_0(s)$ such that for all $n\geq N_0$ the Taylor polynomial $T_{2n-2}(\lambda z)$ defined by \eqref{represent} and Lemma~\ref{lem:crit pts} has exactly $m$ zeros $\{ z^\mathrm{H}_{k,n} \}_{k=1}^m$ converging to $s/\lambda$. Moreover, these zeros converge at a super-exponential rate:
\begin{equation}
	\max_{1\leq k \leq m} \left| \lambda z^\mathrm{H}_{k,n} - s \right|  
	= \bigo{ \exp \lb -\frac{2n}{m} \lp \log \left| \frac{\lambda}{s} \right| - 1 + W^{-1} 
	- \frac{7}{8n} W + \frac{1}{4n} \log n \rp \rb }.
\end{equation}
\end{thm}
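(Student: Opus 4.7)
The plan is to apply Rouch\'e's theorem to the decomposition of $T_{2n-2}(s+\mu)$ already assembled in the display preceding \eqref{RRR}, exploiting the super-exponentially small prefactor bounded in \eqref{small factor}.  Existence of exactly $m$ zeros clustering near $s$ is a soft consequence of the uniform approximation already in hand:  Theorem~\ref{thm:T asymp} (\cf\ Corollary~\ref{cor:1}) gives $T_{2n-2}(\lambda \cdot) \to f(\lambda \cdot)$ uniformly on any fixed compact neighborhood of $s/\lambda$, and Hurwitz's theorem then produces, for $n\ge N_0(s)$, precisely $m$ roots $z^{\mathrm H}_{k,n}\to s/\lambda$.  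All the work is in the rate.

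Write $\eps_n$ for the right-hand side of \eqref{small factor}, so
\[
    \eps_n = \exp\!\lb -2n\lp \log\!\left|\tfrac{\lambda}{s}\right| - 1 + \W^{-1} - \tfrac{7}{8n}\W + \tfrac{1}{4n}\log n \rp \rb.
\]
I would fix $r>0$ so small that $s$ is the only zero of $f$ in $\overline{\ball(s,r)}$ and rewrite the displayed expansion before \eqref{RRR} as
\[
    T_{2n-2}(s+\mu) \;=\; \frac{f^{(m)}(s)}{m!}\mu^m \;+\; A_n \;+\; R(s,\mu),
    \qquad A_n := \frac{s^{2n}f(\lambda)}{\lam^{2n}\sqrt n}\, h(s/\lam),
\]
where by \eqref{small factor} and boundedness of $h$ near $s/\lambda$ one has $|A_n| \le C\eps_n$.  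Take a test radius $\rho_n = C_0\, \eps_n^{1/m}$ with $C_0$ large depending only on $|f^{(m)}(s)|$, and apply Rouch\'e's theorem on the circle $|\mu|=\rho_n$:  the principal term satisfies $|f^{(m)}(s)\mu^m/m!| = (|f^{(m)}(s)|/m!)\,C_0^m\,\eps_n$, which for $C_0$ large beats $|A_n|+|R(s,\mu)|$ provided the latter is shown to be $o(\eps_n)$.

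The core estimate is then the control of $R(s,\mu)$ for $|\mu|=\rho_n$.  The first bracket in \eqref{RRR} is analytic and vanishes to order $m{+}1$ at $\mu=0$, hence is $\bigo{\rho_n^{m+1}} = o(\eps_n)$.  The second bracket is multiplied by $A_n/h(s/\lam) = \bigo{\eps_n}$ and splits into two pieces:  the piece $h((s+\mu)/\lambda) - h(s/\lambda) = \bigo{\mu/\lambda}$ (analyticity of $h$ at $s/\lambda$) contributes $\bigo{\eps_n \rho_n/\lambda}$, while the more delicate piece is $((1+\mu/s)^{2n}-1)\,h((s+\mu)/\lambda)$.  Since $\rho_n$ decays faster than any negative power of $n$, the quantity $n\rho_n$ is still super-exponentially small, so $(1+\mu/s)^{2n}-1 = 2n\mu/s + \bigo{(n\rho_n/s)^2}$ contributes $\bigo{\eps_n\, n\rho_n} = o(\eps_n)$.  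Rouch\'e then forces the $m$ Hurwitz roots of $T_{2n-2}(s+\cdot)$ inside $|\mu|\le \rho_n$, which, on dividing through by $\lambda$, is exactly the rate claimed.

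The main obstacle I anticipate is precisely the factor $(1+\mu/s)^{2n}-1$ in \eqref{RRR}:  it is the only place where the large parameter $n$ multiplies $\mu$ without any compensating decay inside the logarithm, and one must check that the super-exponential smallness of $\mu$ beats the linear growth in $n$ cleanly enough to preserve the $1/m$ exponent in the final bound.  Once this bookkeeping is in place, all other terms in $R(s,\mu)$ are dominated by either higher powers of $\rho_n$ or by the explicit small prefactor $\eps_n$, and the argument closes mechanically.
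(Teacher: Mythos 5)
Your proposal is correct and follows essentially the same approach as the paper: both isolate the principal term $f^{(m)}(s)\mu^m/m!$, choose a test circle of radius proportional to $\bigl|s^{2n}f(\lambda)/(\lambda^{2n}\sqrt n)\bigr|^{1/m}$, and close the argument with Rouch\'e's theorem after bounding $R(s,\mu)$ (the paper's $\bigo{n\rho(n)^{m+1}}$ bound matches your $\bigo{\eps_n\,n\rho_n}$). Your explicit attention to the $(1+\mu/s)^{2n}$ factor simply spells out the source of the $n\rho(n)^{m+1}$ term that the paper records more tersely; the two arguments are the same.
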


\begin{proof}
The first half of the theorem is just a restatement of Hurwitz's theorem in the case of Taylor polynomials. 
It remains to establish our superexponential bound on the rate of convergence. 
Let $s$ be a fixed root of order $m$ of $f(s)$, write $\lambda z = s + \mu$ and define the function 
\[
	g(\mu) := T_{2n-2}(s+\mu) - \frac{f^{(m)}(s)}{m!} \mu^{m}
	 = \frac{s^{2n} f(\lambda)}{\lambda^{2n} \sqrt{n}} h(s/\lambda) + R(s,\mu) .
\]
Let 
$
	\rho(n) = \left| \frac{ s^{2n} f(\lambda) }{ \lambda^{2n} \sqrt{n} } \right|^{1/m}.
$
It follows from \eqref{H} and \eqref{h0} that $h(z/\lambda)$ is analytic and bounded for all $z$ in the critical strip. Then using \eqref{RRR}, Taylor's remainder theorem and \eqref{small factor}, on the circle $|\mu| = A \rho(n)$ we have 
\[
	\begin{gathered}
	\left| \frac{f^{m}(s)}{m!} \mu^m \right| =  \frac{|f^{m}(s)| A^m }{m!} \rho(n)^m 
	\\
	|g(\mu)| \leq \rho(n)^m \| h \|_{L^\infty(\mathcal{S}_{1/(2\lambda)})} 
	+ \bigo{ n \rho(n)^{m+1} }
	\end{gathered}
\]
Taking $A$ and $N$ sufficiently large we use Rouche's theorem to conclude that $T_{2m-2}(s+\mu)$ has exactly $m$ zeros inside the circle or radius $A \rho(n)$. 
\end{proof}

\begin{rem}
Though we have only considered fixed roots $s$ in the above Theorem which do not scale with parameter $\lambda(n)$, these may also exhibit super-exponential convergence under certain assumptions. 
To consider growing roots, in the proof above one must include the asymptotic behavior of $s$ and $f^{(m)}(s)$. 
Essentially one must know that $s$ does not grow faster than $\lambda$, that as $|s|$ grows the order $m$ of the roots is bounded, and that the first non-zero derivative $f^{(m)}(s)$ is not too close to zero. 
Skipping the other details, the proof goes through as before where one considers the new radial scaling factor:
\[
	\tilde{\rho}(n) = \left| \frac{ s^{2n} f(\lambda) m! }{ f^{(m)}(s) \lambda^{2n} \sqrt{n} } \right|^{1/m}
\]
then in light of \eqref{small factor} the condition for super-exponential convergence amounts to knowing that the quantity
\[
	r_{n,s} = \log|\lambda| - \log|s| - \frac{1}{n} \log \frac{m!}{f^{(m)}(s)} \gg 1.
\]
and exponential convergence is maintained as long as $r_{n,s}$ is positive and bounded away from zero.
\end{rem}

\section{Extensions to a class of L-functions}

In this section we briefly explain how the results can be extended to a class of functions that includes many functions of interest in the theory of numbers, referred to as analytic L-functions.  

Suppose that a function $L$ is defined via a Dirichlet series,
\[
L(z) = \sum_{n=1}^{\infty}\frac{a_{n}}{n^{z}} \ , 
\]
with the $a_{n}$'s being real.  The following conditions are sufficient to extend the analysis described above to a collection of these analytic $L$ functions.
\begin{itemize}
\item[(A)] The function $L$ extends to an analytic function in $\mathbb{C}$.
\item[(B)] The function $L$ satisfies a functional equation of the form
\begin{align}
	\Lambda(z) := N^{z/2} \prod_{j=1}^{J} \Gamma_{\mathbb{R}}(z + \mu_{j})
	\prod_{k=1}^{K}  \Gamma_{\mathbb{C}}(z + \eta_{j}) L(z)  
	\quad = \quad \Lambda(1 - z) \ , 
\end{align}
where, following \cite{lmfdb},  $\Gamma_{\mathbb{R}}(s) = \pi^{-s/2} \Gamma \left( \frac{s}{2} \right)$ and $\Gamma_{\mathbb{C}}(s) = 2 ( 2 \pi)^{-s} \Gamma \left(s \right)$.
\item[(C)]  The function $L$ satisfies a polynomial bound in $\mbox{Im}(z)$ for $|z| \to \infty$:  for $y$ sufficiently large, $|L(x+iy) \le C |y|^{a}$ for some positive constants $C$ and $a$.
\item[(D)]  The analogue of the last two estimates in (\ref{zeta bounds}) for the function $L$ holds true.
\end{itemize}

\begin{rem}  The last two estimates in (\ref{zeta bounds}) for the function $\zeta$, or their analogues for $L$ (condition (D)), are more than what is needed for the asymptotic analysis of the Taylor approximants.  In order to establish the uniform asymptotic description of the Taylor approximants, it is sufficient to represent the Taylor approximant as an integral over two vertical lines, and then have enough analytical control on the phase function $\phi_{\lambda}$ in order to apply the steepest descent method.  This is guaranteed by conditions (A), (B), and (C) alone, and we present those results below.   The additional detailed control near the edge of the critical strip was used to confine the ``spurious zeros" of $T_{2n-2}(\xi; \lambda z)$, using our asymptotic analysis, establishing Theorem 4.1 and its Corollary.  One can easily state results analogous to Theorem 4.1 and its Corollary.  However, since this relies upon estimates which are known only in special cases (to the best of our knowledge), we will refrain from stating these conditional results.  Rather, in this section we will only state the extension of Theorem 3.1 to a general class of analytic L-functions whose existence is already established.
\end{rem}
\begin{rem}  Of course, we could also relax some of the above conditions, and there are in principle no additional obstacles if we permit complex Dirichlet coefficients $a_{n}$ (which can lead to a functional equation of the form $\Lambda(z) = \overline{\tilde{\Lambda}}(1-z)$, where $\tilde{\Lambda}$ is an L-function dual to $\Lambda$), nor if we permit the existence of a pole at $z=1$ (and hence at $z=0$).  But the additional complication is perhaps worth consideration only in specific examples.
\end{rem}
Under assumptions (A), (B), and (C), we can define the function $F(z) = \Lambda( 1/2 + z)$, and then express the rescaled Taylor polynomial of degree $2n -2$ for the function $F( z)$ via a contour integral over two vertical lines, as in (\ref{represent}):
\begin{gather}
\label{LfunctRep}
	T_{2n-2}(F; \lambda z) = F(\lambda z) 
	\left[ \chi(z) - \frac{e^{n \phi_{\lambda}(z)}}{\sqrt{n}}\mathcal{H}(z) \right], \\
		e^{n \phase(z)} := \frac {z^{2n} \mathcal{F}(\lambda) }{F(\lambda z) }, 
		\qquad
		\mathcal{H}(z) := \frac{ \sqrt{n}}{ 2\pi i } \int_{ \dscal_1 } 
		e^{-n \phase(s)} \frac{ds}{s-z}.
\end{gather}
Moreover, the phase function $\phi_{\lambda}$ can be expressed in manner analogous to (\ref{phase expand 0}):
\begin{multline}
	\phi_{\lambda}(z) 
	= 2 \log{z} + \frac{\lambda}{n} \left[ 
		\log{\left(  \frac{2^{K}}{N^{1/2}}\right)}  - 
		\left( \frac{J}{2} + K \right) \log{\frac{\lambda}{2 \pi}} 
		\right](z-1)  + \\
	- \frac{ \lambda}{n}  \left[ \frac{J}{2} + K \right] \left( 1 - z + z \log{z}  \right)
	- \frac{1}{n} \log{ L\left( \frac{1}{2} + \lambda z \right)} 
	+ \frac{1}{n} r(z;\lambda)
\end{multline}
where the function $r(z;\lambda)$, as well as its derivative in $z$, is bounded.  We may then compute $\phi_{\lambda}'$:
\begin{multline}
	\phi_{\lambda}'(z) = 
	\frac{2}{z} + \frac{\lambda}{n} \left[ 
	\log{\left(  \frac{2^{K}}{N^{1/2}}\right)}  
	- \left( \frac{J}{2} + K \right) \log{\frac{\lambda}{2\pi}} 
	\right]  + \\
	- \frac{ \lambda}{n}  \left[ \frac{J}{2} + K \right]  \log{z} 
	- \frac{\lambda}{n} \frac{L'}{L} 
	\left(  \frac{1}{2} + \lambda z \right) + \frac{1}{n} r'(z;\lambda)
\end{multline}

Following the arguments of Section 2, one may verify the analogue of Lemma \ref{lem:crit pts}, which sets the stage for the application of the steepest descent method. 

\begin{lemma}\label{lem:LfunctCritPts}
Suppose that the function $L$ satisfies assumptions (A), (B), and (C) above.  Then, for all sufficiently large $n$ there is a unique choice of $\lambda = \lambda(n)$, with $\lambda > 1/2$ (\ie right of the shifted critical strip) satisfying 
$
	\partial_z \phase(z) \Big|_{z=1} = 
	2 - (\lambda/n) \partial_\lambda \log F(\lambda) = 0.
$
This choice of $\lambda$ satisfies the relation
\begin{equation}\label{Lfunc lambda-func-asymp}
	2 + \frac{\lambda}{n}  \left[ \log{\left(  \frac{2^{K}}{N^{1/2}}\right)}  - \left( \frac{J}{2} + K \right) \log{\frac{\lambda}{2\pi}} \right]   
	= \bigo{\frac{1}{n}},
\end{equation}
and asymptotically 
\[
	\lambda = \lambda(n) 
	= \frac{4n}{J + 2 K} \left[  \W \lp  \frac{ 2n}{\pi ( J + 2 K)} \lp \frac{N}{4^{K}} \rp^{\frac{1}{J+2K}}\rp \right]^{-1} \lb 1 + \bigo{n^{-1}} \rb.  	
\]
Here $\W(z)$ is the branch of the inverse function to $\W e^{\W} = z$ which is real and increasing for $z \in (-e^{-1},\infty)$ sometimes called the Lambert-$\W$ function. 

Moreover, for this choice of $\lambda$, the critical point at $z=1$ is simple and
\begin{equation}\label{Lfunc phase constant}
	\phase''(1) = -2 + \bigo{ \frac{1}{\log n} }. 
\end{equation}
In addition, for any $\sigma > 1$, there exists $N(\sigma)$ sufficiently large so that for all $n>N(\sigma)$, the only critical point of $\phi_{\lambda}$ in $\left\{z: \mbox{Re}(z) > \frac{\sigma}{2 \lambda}\right\}$ is $z=1$.
\end{lemma}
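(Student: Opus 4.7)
The plan is to imitate the proof of Lemma~\ref{lem:crit pts} step by step, replacing the $\zeta$-specific quantities by the functional-equation data $(J, K, N)$ carried by an arbitrary analytic L-function.

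First I would evaluate the formula for $\partial_z \phase$ displayed above Lemma~\ref{lem:LfunctCritPts} at $z=1$. Setting $\epsilon=1/n$, the equation $\partial_z \phase(1)=0$ takes the form
\begin{equation*}
G(\epsilon,\lambda) := 2 + \epsilon \lambda \lb \log\!\lp \tfrac{2^K}{N^{1/2}}\rp - \lp \tfrac{J}{2} + K\rp \log\!\tfrac{\lambda}{2\pi}\rb - \epsilon \, \widetilde{R}(\lambda) = 0,
\end{equation*}
where $\widetilde R(\lambda) = \lambda (L'/L)(\lambda+1/2) - r'(1;\lambda)$ collects the remaining $\bigo{1}$ contributions. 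The bound $\widetilde R = \bigo{1}$ does not require assumption (D): for $\Re s > 1$ the Dirichlet series for $L$ converges absolutely, so the elementary argument of Lemma~\ref{lem:dlog zeta} adapted to $L$ yields $(L'/L)(\lambda+1/2) = \bigo{\lambda^{-1}}$, while $r'(1;\lambda) = \bigo{1}$ is part of the hypothesis on $r$. Dropping $\widetilde R$, the resulting explicit transcendental equation in $\lambda$ has root
\begin{equation*}
\lambda_0(n) = \tfrac{4n}{J+2K} \lb \W\!\lp \tfrac{2n}{\pi(J+2K)} \lp \tfrac{N}{4^K}\rp^{1/(J+2K)}\rp \rb^{-1},
\end{equation*}
as one verifies by substitution using $\W e^{\W}=u$.

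Next I would promote this approximate root to an exact one via the implicit function theorem exactly as in Lemma~\ref{lem:crit pts}. Set $\lambda(\epsilon,\nu) = \lambda_0(1/\epsilon)(1 + \epsilon \nu)$ and $\widehat G(\epsilon,\nu) = \epsilon^{-1} G(\epsilon, \lambda(\epsilon,\nu))$. The identity $\log u = \W(u) + \log \W(u)$ cancels the leading logarithms, and a direct computation gives $\widehat G(0,0)=0$ and $\partial_\nu \widehat G(0,0) = -(J+2K) \neq 0$, whence the implicit function theorem provides a unique bounded $\nu(\epsilon)$ near $\epsilon=0$. Unwinding the substitution yields existence and uniqueness of $\lambda(n)>1/2$, the stated asymptotic formula, and \eqref{Lfunc lambda-func-asymp}.

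Finally, \eqref{Lfunc phase constant} follows by differentiating $\partial_z \phase$ a second time at $z=1$: the contribution from $\partial_z(2/z)|_{z=1}$ equals $-2$, and every remaining term carries a prefactor $\lambda/n = \bigo{1/\log n}$ (in particular $\lambda(L'/L)'(\lambda+1/2) = \bigo{1}$ by the same Dirichlet-series argument used for $L'/L$ itself). For the last claim, in the region $\{\Re z > \sigma/(2\lambda)\}$ with $\sigma>1$ fixed one stays a definite distance from the critical line, so the elementary $L'/L$ estimate again suffices and the correction terms in $\partial_z \phase$ are all $\bigo{1/\log n}$; meanwhile $|2(z^{-1}-1)|$ is bounded below on any compact subset avoiding $z=1$, and a Cauchy-estimate argument as in Lemma~\ref{lem:phase'} handles large $|z|$. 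The main obstacle is bookkeeping rather than analysis: one must carry the constants $(N, J, K)$ through the Lambert-$\W$ rescaling correctly, and be explicit that only the elementary bound of Lemma~\ref{lem:dlog zeta} — not the sharper edge-of-strip bounds encoded in assumption (D) — is invoked, which is precisely the point of the remark following the lemma statement.
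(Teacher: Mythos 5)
Your strategy is exactly the one the paper intends: the paper gives no written proof of this lemma, saying only that ``following the arguments of Section 2, one may verify the analogue of Lemma~\ref{lem:crit pts},'' and your proposal is precisely that adaptation, with the Lambert-$\W$ identification of $\lambda_0$ worked out correctly (the equation $2=\frac{\lambda}{n}\lp\frac{J}{2}+K\rp\log(c\lambda)$ with $c=(2\pi)^{-1}(N/4^K)^{1/(J+2K)}$ is indeed solved by the stated formula). Two inaccuracies are worth fixing. First, with your normalization $\lambda(\eps,\nu)=\lambda_0(1+\eps\nu)$ and $\widehat{G}=\eps^{-1}G$, a direct computation gives $\partial_\nu\widehat{G}(0,0)=-2$, not $-(J+2K)$: the factor $\frac{J}{2}+K$ multiplying the logarithm is cancelled by its reciprocal sitting inside $\lambda_0$, exactly as in the $\zeta$ case, where $J+2K=1$ and yet the paper obtains $-2$. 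This is harmless for the implicit function theorem (only nonvanishing matters), but the stated value is wrong. Second, the monotonicity argument of Lemma~\ref{lem:dlog zeta} relies on the nonnegativity of the coefficients of $-\zeta'/\zeta$ and does not literally adapt to a Dirichlet series with sign-changing $a_n$; what you actually need for the existence and asymptotics of $\lambda(n)$ is only that $L'(\lambda+1/2)/L(\lambda+1/2)\to 0$ as $\lambda\to\infty$ along the reals, which follows trivially (indeed exponentially fast) from absolute convergence of the series, provided $a_1\neq 0$. Finally, for the uniqueness-of-critical-point claim your sketch is as complete as anything the paper supplies, but be aware that bounding $L'/L(1/2+\lambda z)$ throughout the unbounded region $\Re z>\sigma/(2\lambda)$ tacitly requires $L$ to be zero-free in $\Re s>1$, which follows from an Euler product but not formally from assumptions (A)--(C) alone.
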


The analog of Theorem 3.1, establishing the uniform asymptotic behavior of the Taylor approximants $T_{2n-2}(F; \lambda z)$, is also clear, under these assumptions.  The computations are a bit more involved because of the flurry of Gamma functions, but are otherwise straightforward.

The analogue of the analytic transformation $w(z)$, defined in the beginning of Section 3, is defined via
\begin{equation}\label{LfunctWRelation}
	w^2 = \phase(z) = \frac{\phase''(1)}{2}  (z-1)^2 \lb 1 + \bigo{z-1}  \rb .
\end{equation}
This function obeys all of the properties described in the beginning of Section 3.  In particular, when restricted to any sufficiently small neighborhood $\ball_{1,\delta}$ of $z=1$ (or $\ball_{-1,\delta}$ of $z=-1$), it is an invertible conformal map onto a bounded neighborhood of $w = 0$, and the branch is chsen so that $w$ maps the vertical line $\dscal_1$ locally to a nearly horizontal contour in the $w$-plane oriented left-to-right.  In addition, we may use symmetry so that $w(z) = w(-z)$ for $z \in \ball_{-1,\delta}$.  Moreover, the estimate on $\phase''(1)$ in Lemma~\ref{lem:LfunctCritPts} implies that $w=w(z)$ is nearly isometric for $z$ near 1 and $n \gg 1$. We again fix the neighborhoods $\ball_{\pm1, \delta}$ by requiring that $B_{\pm 1, \delta}$ are, for any sufficiently small $\delta>0$, the two pre-images of the disk of radius $\delta$ in the $w$-plane:
\begin{equation}\label{LfunctBallDef}
	w \lp \ball_{\pm 1, \delta} \rp = \{ w \in \C \,:\, |w| < \delta \}
\end{equation}
and we let $\ball_\delta = \ball_{1,\delta} \cup \ball_{-1,\delta}$.

\begin{thm}\label{thm:LFuncT asymp}
Suppose the function $L$ satisfies assumptions (A), (B), and (C) above.  Let $\lambda = \lambda(n)$ be as described in Lemma~\ref{lem:LfunctCritPts}, $\chi(z)$ the characteristic function of the set $| \Re z | < 1$, and $\mathcal{H}_0(z)$, defined via
\begin{equation}\label{Lfuncth0}
	\mathcal{H}_0(z) = \frac{1}{ \sqrt{2\pi | \phase''(1) |} } \frac{2}{1-z^2}.
\end{equation}
be the leading order stationary phase approximation of $\mathcal{H}(z)$.  Then as $n \to \infty$ the Taylor polynomials described by \eqref{LfunctRep} admit the asymptotic expansion
\begin{align}\label{Lfunc T asymp}
	T_{2n-2}(F; \lambda z) = 
	T_{2n-1}(F; \lambda z) =
	\begin{dcases}
		F( \lambda z) \lb 
		\chi(z) 
		- \frac{ e^{n \phase(z)} }{ \sqrt{n} } \lp \mathcal{H}_0(z) 
		+ \Ecal(z) \rp
		\rb
		& z \in \C \backslash \ball_\delta 
		\\ \bigskip
		F( \lambda z) 
		\lb 
		\frac{1}{2} \erfc(i \sqrt{n} w(z) ) 
		- \frac{e^{n\phase}(z) }{ \sqrt{n} } \Ecal(z)
		\rb
		& z \in  \ball_\delta.
	\end{dcases}
\end{align}
where the residual error function $\Ecal(z)$ is bounded, analytic in $\C \backslash \lp (\dscal_1 \backslash \ball_\delta) \cup \partial \ball_\delta \rp$, and satisfies
\begin{equation}
	\Ecal(z) = \begin{dcases}
		\bigo{n^{-1}} 
			& z \in \ball_\delta^c \\
		h_0(z) + \frac{1}{2i \sqrt{\pi} w(z)} + \bigo{n^{-1}}
			& z \in \ball_\delta.
	\end{dcases}
\end{equation}
\end{thm}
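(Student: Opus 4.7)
The plan is to mimic the proof of Theorem~\ref{thm:T asymp} step for step, with $(h,h_0,f)$ replaced by $(\mathcal{H},\mathcal{H}_0,F)$ throughout and with Lemma~\ref{lem:LfunctCritPts} playing the role of Lemma~\ref{lem:crit pts}. The three ingredients to reassemble are: (i) a stationary-phase approximation of $\mathcal{H}(z)$ on $\ball_\delta^c$; (ii) an explicit local model for $\mathcal{H}(z)$ on $\ball_\delta$; and (iii) uniform bounds on the residual glueing the two.

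First I would carry out a steepest-descent analysis for $\mathcal{H}$ away from $\ball_\delta$. The exponential decay contributed by the Gamma factors in assumption (B), tempered by the polynomial growth permitted by assumption (C), guarantees that $e^{-n\phase(s)}$ is integrable on $\dscal_1$ and exponentially small outside any fixed neighborhood of $z=\pm 1$. Since Lemma~\ref{lem:LfunctCritPts} asserts that $z=\pm 1$ are the only critical points of $\phase$ to the right of the critical strip (and by the symmetry $\phase(z)=\phase(-z)$ the only ones to its left as well), the standard stationary-phase calculation produces
\[
  \mathcal{H}(z)=\mathcal{H}_0(z)\lb 1+\bigo{n^{-1}}\rb,\qquad z\in\ball_\delta^c,
\]
with $\mathcal{H}_0$ as in \eqref{Lfuncth0}.

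Next I would construct the local model on $\ball_\delta$ exactly as in the $\xi$-case by setting $\mathcal{K}(z)=\sqrt{n}\,\hat k(\sqrt{n}\,w(z))$ with $\hat k$ the Cauchy integral \eqref{k def 1}. Because \eqref{LfunctWRelation} gives $w(z)^2=\phase(z)$, the jump of $\mathcal{K}$ across $\dscal_1\cap\ball_\delta$ equals $\sqrt{n}\,e^{-n\phase(z)}$, matching that of $\mathcal{H}$; hence the residual
\[
  \Ecal(z)=\begin{cases}\mathcal{H}(z)-\mathcal{H}_0(z),&z\in\ball_\delta^c,\\ \mathcal{H}(z)-\mathcal{K}(z),&z\in\ball_\delta,\end{cases}
\]
extends analytically across $\dscal_1\cap\ball_\delta$ and is analytic on $\C\setminus\Gamma_\Ecal$ with $\Gamma_\Ecal=(\dscal_1\setminus\ball_\delta)\cup\partial\ball_\delta$, so that a Cauchy representation identical in form to \eqref{r cauchy} follows.

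To bound $\Ecal$, on $\dscal_1\setminus\ball_\delta$ the jump $\sqrt{n}\,e^{-n\phase}$ is exponentially small, while on $\partial\ball_\delta$ one writes $\mathcal{H}_0-\mathcal{K}=[\mathcal{H}_0+(2i\sqrt{\pi}\,w)^{-1}]-[\mathcal{K}+(2i\sqrt{\pi}\,w)^{-1}]$: the first bracket extends analytically into $\ball_\delta$ because its residues at $z=\pm 1$ vanish, while the second is uniformly $\bigo{n^{-1}}$ on $\partial\ball_\delta$ by \eqref{K asymptotics}. The boundedness of the Cauchy projections then yields the claimed $\bigo{n^{-1}}$ error on $\ball_\delta^c$ and the additional explicit pole contribution $h_0(z)+(2i\sqrt\pi\,w(z))^{-1}$ inside $\ball_\delta$. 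Substituting $\mathcal{H}=\mathcal{H}_0+\Ecal$ on $\ball_\delta^c$ and $\mathcal{H}=\mathcal{K}+\Ecal$ on $\ball_\delta$ into \eqref{LfunctRep}, and invoking the closed form \eqref{k} that rewrites $\mathcal{K}$ in terms of $\tfrac12\erfc(i\sqrt{n}\,w(z))$, produces \eqref{Lfunc T asymp}. The main obstacle, and the reason assumption (C) is used, is the uniform control of the remainder $r(z;\lambda)$ in the expansion of $\phase$ on the contour of integration: for $\xi$ this leaned on the sharper estimates \eqref{zeta bounds}, but for a general analytic L-function one must argue from (C) alone that $\partial_z r(z;\lambda)=\bigo{1}$ on $\dscal_1$ away from the critical strip, so the stationary-phase estimates genuinely close up at order $n^{-1}$ rather than being degraded by a logarithmic factor.
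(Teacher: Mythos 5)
Your proposal is correct and follows exactly the route the paper intends: the paper itself gives no independent proof of this theorem, merely noting that the analysis of Theorem~\ref{thm:T asymp} carries over verbatim with $(h,h_0,f,\xi)$ replaced by $(\mathcal{H},\mathcal{H}_0,F,\Lambda)$ once Lemma~\ref{lem:LfunctCritPts} supplies the critical-point and $\phase''(1)$ estimates. Your three-step decomposition (steepest descent on $\ball_\delta^c$, the $\erfc$-based local model $\mathcal{K}$ on $\ball_\delta$, and the Cauchy-integral bound on the glued residual $\Ecal$) is precisely the structure of Lemma~\ref{lem:r} and the proof of Theorem~\ref{thm:T asymp}. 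One small clarification on your closing remark: the boundedness of $r(z;\lambda)$ and $\partial_z r(z;\lambda)$ on $\dscal_1$ does not really hinge on assumption~(C) supplying a subtle estimate near the critical strip, because after rescaling the contour $\dscal_1$ sits at $\Re(\lambda z)=\pm\lambda\to\infty$, i.e.\ deep in the region of absolute convergence of the Dirichlet series where $L(1/2+\lambda z)$ is essentially constant and bounded away from zero; assumption~(C) is what makes the deformation to the unbounded vertical contour in \eqref{LfunctRep} legitimate in the first place (absolute convergence of the Cauchy integral), after which the $\Gamma$-factors and Stirling's expansion alone control $r$ and $r'$. This is why the paper emphasizes that (A), (B), (C) suffice for this theorem while the finer $\zeta$-type bounds (condition (D)) are reserved for the zero-counting results.
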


\section*{Acknowledgements}
R. Jenkins was supported by the National Science Foundation under grant DMS-1418772.
K. D. T-R McLaughlin was supported by the National Science Foundation under grant DMS-1401268.  The authors thank Nigel Pitt for useful conversations.

\printbibliography

\end{document}